\documentclass{amsart}
\usepackage{amsmath}
\usepackage{amsthm}
\usepackage{amssymb}
\usepackage{appendix}
\usepackage{bm}
\usepackage{dsfont}
\usepackage{enumitem}\setlist[enumerate]{nosep,label=\textnormal{(\arabic*)}}
\usepackage{eucal}
\usepackage[T1]{fontenc}
\usepackage{graphicx}
\usepackage[hidelinks]{hyperref}
\usepackage[capitalise]{cleveref}
\usepackage{mathrsfs}
\usepackage{mathtools}
\usepackage{nameref}
\usepackage{physics}
\usepackage{stmaryrd}
\usepackage{thmtools}
\usepackage{xcolor}
\usepackage{xparse}

\makeatletter
\let\@wraptoccontribs\wraptoccontribs
\makeatother

\makeatletter
\DeclareFontFamily{OMX}{MnSymbolE}{}
\DeclareSymbolFont{MnLargeSymbols}{OMX}{MnSymbolE}{m}{n}
\SetSymbolFont{MnLargeSymbols}{bold}{OMX}{MnSymbolE}{b}{n}
\DeclareFontShape{OMX}{MnSymbolE}{m}{n}{
    <-6>  MnSymbolE5
   <6-7>  MnSymbolE6
   <7-8>  MnSymbolE7
   <8-9>  MnSymbolE8
   <9-10> MnSymbolE9
  <10-12> MnSymbolE10
  <12->   MnSymbolE12
}{}
\DeclareFontShape{OMX}{MnSymbolE}{b}{n}{
    <-6>  MnSymbolE-Bold5
   <6-7>  MnSymbolE-Bold6
   <7-8>  MnSymbolE-Bold7
   <8-9>  MnSymbolE-Bold8
   <9-10> MnSymbolE-Bold9
  <10-12> MnSymbolE-Bold10
  <12->   MnSymbolE-Bold12
}{}

\let\llangle\@undefined
\let\rrangle\@undefined
\DeclareMathDelimiter{\llangle}{\mathopen}%
                     {MnLargeSymbols}{'164}{MnLargeSymbols}{'164}
\DeclareMathDelimiter{\rrangle}{\mathclose}%
                     {MnLargeSymbols}{'171}{MnLargeSymbols}{'171}
\makeatother

\usepackage{tikz}
\usetikzlibrary{cd}
\usetikzlibrary{positioning, angles, quotes}
\newcommand{\nospacepunct}[1]{\makebox[0pt][l]{\,#1}} 

\theoremstyle{plain}
\newtheorem{thm}{Theorem}[section]\RenewCommandCopy{\theHthm}{\thethm}
\newtheorem{cor}[thm]{Corollary}\RenewCommandCopy{\theHcor}{\thecor}
\newtheorem{prop}[thm]{Proposition}\RenewCommandCopy{\theHprop}{\theprop}
\newtheorem{lem}[thm]{Lemma}\RenewCommandCopy{\theHlem}{\thelem}
\newtheorem{claim}[thm]{Claim}\RenewCommandCopy{\theHclaim}{\theclaim}
\RenewCommandCopy{\theHexer}{\theexer}
\RenewCommandCopy{\theHq}{\theq}
\newtheorem{conj}[thm]{Conjecture}\RenewCommandCopy{\theHconj}{\theconj}

\theoremstyle{definition}
\newtheorem{defn}[thm]{Definition}\RenewCommandCopy{\theHdefn}{\thedefn}
\RenewCommandCopy{\theHex}{\theex}

\theoremstyle{remark}
\newtheorem{rem}[thm]{Remark}\RenewCommandCopy{\theHrem}{\therem}
\RenewCommandCopy{\theHconv}{\theconv}

\Crefname{thm}{Theorem}{Theorems}
\Crefname{lem}{Lemma}{Lemmas}
\Crefname{defn}{Definition}{Definitions}
\Crefname{claim}{Claim}{Claims}
\Crefname{conj}{Conjecture}{Conjectures}
\Crefname{ex}{Example}{Examples}
\Crefname{prop}{Proposition}{Propositions}

\newcommand{\GL}{\mathrm{GL}}

\newcommand{\Sp}{\mathrm{Sp}}

\newcommand{\rk}{\mathrm{rk}}

\newcommand{\C}{\mathbb{C}}
\newcommand{\F}{\mathbb{F}}
\newcommand{\K}{\mathbb{K}}
\newcommand{\N}{\mathbb{N}}
\newcommand{\Q}{\mathbb{Q}}
\newcommand{\R}{\mathbb{R}}
\newcommand{\Z}{\mathbb{Z}}

\newcommand{\inv}{^{-1}}

\DeclareMathOperator{\Aut}{Aut}
\DeclareMathOperator{\cd}{cd}

\DeclareMathOperator{\gr}{\mathfrak{gr}}

\DeclareMathOperator{\im}{im}

\DeclareMathOperator{\Inn}{Inn}

\DeclareMathOperator{\Mat}{Mat}
\DeclareMathOperator{\Mod}{Mod}

\DeclareMathOperator{\Out}{Out}

\newcounter{andrewcomments}

\newcounter{samcomments}

\title{Outer automorphism groups and the Atiyah Conjecture}

\author{Sam P.\ Fisher}
\address{Instituto de Ciencias Matem\'aticas, CSIC-UAM-UC3M-UCM, Madrid, Spain}
\email{samuel.fisher@icmat.es}

\author{Andrew Ng}
\address{Mathematisches Institut, Universität Bonn, Endenicher Allee 60, 53115 Bonn, Germany}
\email{\href{mailto:clan@math.uni-bonn.de}{clan@math.uni-bonn.de}}

\begin{document}

\maketitle

\vspace{-2em}

\begin{center}
    \emph{With an appendix by Sam P.\ Fisher}
\end{center}

\begin{abstract}
    Let $G$ be the fundamental group of a compact surface, a finitely generated free group, or more generally a finitely generated right-angled Artin group. We prove that the von Neumann dimension function of $\Out(G)$ is valued in a discrete subgroup of $\Q$. This is accomplished by establishing the Strong Atiyah Conjecture for a torsion-free subgroup of $\Out(G)$ of finite index. We also prove that for every field $\K$, there exists a torsion-free subgroup $H \leqslant \Out(G)$ of finite index such that $\K[H]$ embeds into a division ring, and hence satisfies the Zero Divisor Conjecture. These results are obtained by establishing analogous ones for a suitable open subgroup of $\Out(\mathbf G)$ and its completed group algebra, where $\mathbf G$ denotes the pro-$p$ completion of $G$. In an appendix, the first author shows that an automorphism of a free nilpotent group is inner if and only if it induces an inner automorphism of its pro-$p$ completion.
\end{abstract}

\section{Introduction}\label{sec:intro}

The general linear groups $\GL_m(\Z)$, outer automorphism groups of free groups $\Out(F_n)$, and mapping class group of closed surfaces $\Mod(\Sigma_g)$ play a central role in geometric group theory. A fruitful guiding principle in their study is to compare and contrast the three families by attempting to understand what properties hold for all three of them, and when and why some property holds for some family but not the other. The groups $\GL_m(\Z)$ are the most classical and well understood; determining which of its properties are shared by $\Out(F_n)$ and $\Mod(\Sigma_g)$ has led to many widely studied open problems in geometric group theory. For instance, Kazhdan proved that $\GL_m(\Z)$ has property (T) for all $m \geqslant 3$ when he introduced the property \cite{Kazhdan1967}, and it was only recently proven that $\Out(F_n)$ has property (T) for $n \geqslant 4$ \cite{KalubaKielakNowak_T,Nitsche_AutF4propT_2023}. Whether $\Mod(\Sigma_g)$ has property (T) for sufficiently large genus $g$ is a central open question in the study of the mapping class group. In another direction, $\GL_m(\Z)$ is by definition linear, while $\Out(F_n)$ is known to be non-linear for $n \geqslant 4$ \cite{FormanekProcesi1992}; the linearity of $\Mod(\Sigma_g)$ is again an important open problem (it is linear for $g = 2$ \cite{BigelowBudney2001}). As a final example, it is known that the conjugacy problem is solvable for both $\GL_m(\Z)$ \cite{Grunewald_conjugacy} and $\Mod(\Sigma_g)$ \cite{Hemion1979}, but is an open problem for $\Out(F_n)$ for $n \geqslant 4$ (it is solvable for $n \leqslant 3$ \cite{DahmaniFrancavigliaMartinoTouikan_ConjOutF3_2025}).

In this article we study the Atiyah Conjecture on $L^2$-invariants and the Kaplansky Zero Divisor Conjecture for group algebras of $\GL_m(\Z)$, $\Out(F_n)$, and $\Mod(\Sigma_g)$. We will also work with outer automorphism groups of finitely generated right-angled Artin groups (or RAAGs), which generalise $\Out(F_n)$ and have gained significant attention. As with the examples above, these conjectures are mostly settled in the case of $\GL_m(\Z)$, so our focus will be on the outer automorphism groups of surfaces, free groups, and RAAGs.

\subsection{The Atiyah and Zero Divisor Conjectures}\label{subsec:atiyah_ZD}
We denote by $\Mat(\C[G])$ the set of finite matrices over the complex group algebra $\C[G]$. For every countable group $G$, there is a rank function 
\[
    \rk_{\mathcal N(G)} \colon \Mat(\C[G]) \to \R_{\geqslant 0}
\]
called the \emph{von Neumann rank function} of $G$. We will define it carefully in \cref{subsec:atiyah}. It satisfies certain basic properties any rank function ought to: the von Neumann rank of the $n$-by-$n$ identity matrix is $n$ and is additive under the diagonal sum of matrices. Importantly, the $L^2$-Betti numbers of a group $G$ acting cocompactly on a CW complex are computed using $\rk_{\mathcal N(G)}$. 

In his seminal work on $L^2$-invariants \cite{Atiyah_OGL2}, Atiyah introduced the $L^2$-Betti numbers of manifolds with a cocompact group action and computed them for certain examples. Each of his computations yielded rational numbers, even though there is nothing in the definition of $L^2$-Betti numbers that implies this should be the case. This prompted Atiyah to ask whether there are examples of cocompact $G$-manifolds with irrational $L^2$-Betti numbers. While Austin \cite{Austin2013} and Grabowski \cite{Grabowski2014} were the first to give examples of groups $G$ such that the von Neumann rank function $\rk_{\mathcal N(G)}$ takes irrational values, Pichot, Schick, and Zuk were the first to definitively resolve Atiyah's problem by giving an example of a compact $7$-manifold such that $b_3^{(2)}(\widetilde{M})$ is transcendental \cite{PichotSchickZuk}.

The examples of groups yielding irrational $L^2$-Betti numbers all have torsion subgroups of unbounded order. For groups $G$ with a bound on the orders of their torsion elements, the question of what possible values $\rk_{\mathcal N(G)}$ can take has remained an active area of research, and has been reformulated into the following open problem known as the Strong Atiyah Conjecture.

\begin{conj}[The Strong Atiyah Conjecture over $\K$]\label{conj:SAC}
    Let $\K$ be a subfield of $\C$, let $G$ be a countable group with a bound on the orders of its finite subgroups, and let $\mathrm{lcm}(G)$ denote the least common multiple of the orders of the finite subgroups of $G$. For all finite matrices $A$ over $\K[G]$, we have $\rk_{\mathcal N(G)}(A) \in \frac{1}{\mathrm{lcm}(G)} \Z$.
\end{conj}

The Strong Atiyah Conjecture over $\C$ is known for many classes of groups, including all locally indicable groups \cite{JaikinLopezStrongAtiyah2020}, braid groups \cite{LinnellSchick_AtiyahExt}, elementary amenable groups \cite{LinnellDivRings93}, virtually compact special groups \cite{Schreve_AtiyahVCS}, and $3$-manifold groups \cite{FriedlLuck_euler,KielakLinton_3mfldAtiyah}. \cref{conj:SAC} is also known to be stable under taking free products of groups satisfying it \cite{SanchezPeralta2025}.

The Strong Atiyah Conjecture has many nice applications in group theory. One of the best known is to Kaplansky's Zero Divisor Conjecture in characteristic zero: if $G$ is torsion-free and satisfies the Strong Atiyah Conjecture over $\K$, then the group algebra $\K[G]$ is a domain. Another interesting application is that groups of homological dimension one satisfying \cref{conj:SAC} are locally free \cite{KrophollerLinnellLueck2009} (it is a well known conjecture that this should hold without the Atiyah Conjecture assumption). Recently, Jaikin-Zapirain and Linton used the Strong Atiyah Conjecture to prove coherence of one-relator groups, and showed that groups of cohomological dimension two satisfying \cref{conj:SAC} are homologically coherent provided they have vanishing second $L^2$-Betti number \cite{JaikinLinton_coherence}. In the results of \cite{KrophollerLinnellLueck2009,JaikinLinton_coherence}, the only essential consequence of the Strong Atiyah Conjecture used is discreteness of $\rk_{\mathcal N(G)}$, not its specific values. This leads to the following weakened version of the conjecture.

\begin{conj}[The Weak Atiyah Conjecture over $\K$]\label{conj:WAC}
    Let $\K$ be a subfield of $\C$ and let $G$ be a countable group with a bound on the orders of its finite subgroups. There is some $n \in \Z$ such that $\rk_{\mathcal N(G)}(A) \in \frac{1}{n} \Z$ for all finite matrices $A$ over $\K[G]$.
\end{conj}

The name ``Weak Atiyah Conjecture'' is taken from \cite{JaikinLintonSanchez_OneRelProd}, where \cref{conj:WAC} is used to prove results on coherence of groups and group algebras. This form of the Atiyah conjecture is also discussed in L\"uck's book: if $\rk_{\mathcal N(G)}(A)$ lies in an additive subgroup $\Z \leqslant \Lambda \leqslant \Q$ for all matrices $A$ over $\K[G]$, then $G$ satisfies the \emph{Atiyah Conjecture of order $\Lambda$ with coefficients in $\K$} (see \cite[Conjecture 10.3]{Luck02}).

The question of whether \cref{conj:SAC} passes to overgroups of finite index is very delicate and not known to hold in general. On the other hand this holds trivially for \cref{conj:WAC}, since $\rk_{\mathcal N(G)} = \frac{1}{[G:H]}\rk_{\mathcal N(H)}$ whenever $H \leqslant G$ is a subgroup of finite index. This makes the Weak Atiyah Conjecture a much more flexible tool in applications. For example, while it is not known whether $\GL_m(\Z)$ satisfies \cref{conj:SAC}, it has a finite-index subgroup that does by results of Farkas--Linnell \cite{FarkasLinnell2006} and Jaikin-Zapirain \cite{JaikinZapirain_basechange}, and therefore the general linear groups $\GL_m(\Z)$ satisfy \cref{conj:WAC}. In fact, this is the case for any finitely generated group that is linear over a field of characteristic zero, providing a rich source of groups satisfying the Weak Atiyah Conjecture.

While the $L^2$-invariants of $\GL_m(\Z)$ are quite well understood, much less is known for $\Out(F_n)$ and $\Mod(\Sigma_g)$. Most results in this direction concern the vanishing and non-vanishing of their $L^2$-Betti numbers. For instance, by \cite[Theorem D]{AbertBergeronFraczykGaboriau_torsion}, the $L^2$-Betti numbers of $\Mod(\Sigma_g)$ vanish in some range of values depending on $g$. Less is known about $\Out(F_n)$: it has a non-vanishing $L^2$-Betti number in degree equal to $\cd_\Q(\Out(F_n))$ \cite{GaboriauNous_topDimL2_2021} and it has vanishing first $L^2$-Betti number for $n \geqslant 3$. In our first result we prove in particular that mapping class groups and outer automorphism groups of free groups satisfy \cref{conj:WAC}. This has the immediate consequence that all $L^2$-Betti numbers of these groups are rational.

\begin{thm}\label{thm:intro_atiyah}
    Let $A_\Gamma$ be the RAAG on the finite simplicial graph $\Gamma$ and let $\Sigma_{g,b,p}$ be a compact surface of genus $g$ with $b \geqslant 0$ boundary components and $p \geqslant 0$ marked points. There are torsion-free subgroups of finite index in $\Out(A_\Gamma)$ and $\Mod(\Sigma_{g,b,p})$ each satisfying the Strong Atiyah Conjecture over $\C$. In particular, $\Out(A_\Gamma)$ and $\Mod(\Sigma_{g,b,p})$ satisfy the Weak Atiyah Conjecture over $\C$.
\end{thm}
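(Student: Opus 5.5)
The plan follows the strategy indicated in the abstract: pass to pro-$p$ completions, find a torsion-free open subgroup of $\Out(\mathbf G)$ whose completed group algebra embeds in a division ring with suitable rank properties, and pull this back to a finite-index subgroup of $\Out(G)$.

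\textbf{Reductions.} First I reduce the surface case to the RAAG/free/surface-group case. $\Mod(\Sigma_{g,b,p})$ is commensurable up to finite kernels with subgroups of $\Out(\pi_1)$ of a surface or free group; for closed surfaces Dehn--Nielsen--Baer identifies it with an index-$2$ subgroup of $\Out(\pi_1\Sigma_g)$. Boundary and punctures are handled by the Birman exact sequence, or by realising $\Mod$ as a subgroup of $\Out(F_n)$ preserving peripheral classes. Since \cref{conj:SAC} for torsion-free groups passes to subgroups, it then suffices to treat $\Out(G)$ for $G$ a RAAG or a closed surface group.

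\textbf{Congruence subgroup.} Fix a prime $p$, and let $\mathbf G$ be the pro-$p$ completion of $G$, which is residually $p$. Let $\mathbf G_k$ be the $k$-th term of the lower central $p$-series, and set
\[ H = \ker\bigl(\Out(G)\to \Out(G/G_k)\bigr) \]
for a suitable level $k$, for instance the $\mathbb Z/p$-homology congruence kernel with $p\ge 3$. I expect $H$ to be torsion-free (Baumslag--Taylor type results, Toinet for RAAGs) and to embed into $\Out(\mathbf G)$. The embedding uses injectivity of $\Out(G)\to\Out(\mathbf G)$, which is exactly where the appendix result on inner automorphisms of free nilpotent groups enters, together with congruence-type arguments. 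The image of $H$ then lies in an open pro-$p$ subgroup $\mathbf U\le\Out(\mathbf G)$ acting trivially on $\mathbf G/\mathbf G_k$.

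\textbf{Main step: $\mathbf U$ is a $p$-adic analytic-like group.} The key claim is that $\mathbf U$ is a torsion-free pro-$p$ group whose completed algebra $\F_p[[\mathbf U]]$ or $\mathbb Z_p[[\mathbf U]]$ embeds into a division ring. I would argue via the filtration by $\ker(\mathbf U\to\Out(\mathbf G/\mathbf G_j))$. Each successive quotient is abelian and torsion-free (or elementary abelian at the $\F_p$ level), and it embeds in a derivation-type module of the graded Lie algebra $\gr\mathbf G$. This exhibits $\mathbf U$ as an inverse limit of uniform or torsion-free $p$-adic analytic groups $\mathbf U/\mathbf U_j$. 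Each $\mathbb Z_p[[\mathbf U/\mathbf U_j]]$ is a domain by Lazard/Neumann, but passing to the limit needs more. I would use the graded ring $\gr\mathbb Z_p[[\mathbf U]]$ with respect to the induced filtration. If the associated graded Lie algebra (derivations of a free Lie algebra, or of a RAAG or surface Lie algebra) is torsion-free and its enveloping algebra is a domain, then the completed algebra is a domain. I would then invoke Jaikin-Zapirain's results to get a division ring $\mathcal D$ with a Sylvester rank function that is universal and discrete.

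\textbf{Back to the Atiyah conjecture.} With this in hand, the Strong Atiyah Conjecture for $H$ follows from Jaikin-Zapirain's criterion: for a residually (torsion-free $p$) group $H$ whose pro-$p$ completion-type embedding $\Q[H]\hookrightarrow\mathcal D$ is compatible with the von Neumann rank via L\"uck approximation along the chain $H\cap\mathbf U_j$, we get $\rk_{\mathcal N(H)}\in\Z$. Concretely, L\"uck approximation along the chain $H\cap\mathbf U_j$ gives $\rk_{\mathcal N(H)}$ as a limit of normalized ranks over finite $p$-group quotients. These ranks agree with the rank over the $\Q_p$-completed algebras, and they converge to the division-ring rank, which is integer valued. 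Since $\Q[H]$ satisfies SAC, Jaikin-Zapirain's base-change argument upgrades this to $\C$. Finally, the Weak Atiyah Conjecture for the whole group follows from $\rk_{\mathcal N(G)}=\frac1{[G:H]}\rk_{\mathcal N(H)}$. I expect the main obstacle to be the domain and division-ring property of the completed algebra of the infinite-dimensional group $\mathbf U$, which is not $p$-adic analytic.
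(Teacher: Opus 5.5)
Your overall architecture matches the paper's: pass to $\mathbf G$, work in a torsion-free open subgroup of $\Out(\mathbf G)$ filtered by Andreadakis--Johnson-type kernels, pull back along $\Out(G)\hookrightarrow\Out(\mathbf G)$, and finish with \cref{lem:weak_atiyah_commensurability}. However, the step that should produce integrality of $\rk_{\mathcal N(H)}$ does not work. You approximate $H$ by \emph{finite} $p$-quotients and assert that the normalised ranks converge to an integer-valued rank over a division ring containing $\Q_p\llbracket\mathbf U\rrbracket$. L\"uck approximation only gives a limit of rationals with $p$-power denominators. Identifying that limit with a division-ring rank is a Harris-type statement that rests on Noetherianity of $\Z_p\llbracket G\rrbracket$ for $G$ $p$-adic analytic, and that is exactly what is unavailable for the infinite-dimensional $\mathbf U$. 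Nor does a domain or division-ring statement for $\F_p\llbracket\mathbf U\rrbracket$ or $\Z_p\llbracket\mathbf U\rrbracket$ control complex von Neumann ranks by itself; you also leave its graded-ring proof open.

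The missing idea is to approximate by \emph{infinite} quotients, which you mention but then do not use. Filter by the closed lower central series of $\mathbf G$, not the lower central $p$-series: with the latter the quotients $\mathbf U/\mathbf U_j$ are finite. With the lower central series, each $\mathbf U/\mathbf U_j$ is an extension of a torsion-free closed subgroup of $\GL_m(\Z_p)$ by a finitely generated torsion-free nilpotent pro-$p$ group. Hence it is torsion-free $p$-adic analytic (\cref{lem:p_adic_extension}) and satisfies SAC over $\C$ (\cref{thm:p-adic-atiyah}). Jaikin-Zapirain's sofic approximation (\cref{thm:sofic_approx}) along $H\cap\mathbf U_j$ then writes $\rk_{\mathcal N(H)}(A)$ as a limit of integers (\cref{cor:approx_Atiyah}). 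No division ring for the completed algebra of $\mathbf U$ is needed for the Atiyah statement. Where the paper does want one (characteristic $p$), the limit is an ultraproduct of the Farkas--Linnell division rings of the analytic quotients (\cref{thm:p_adic_completed_domain}), not a graded-ring argument.

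Two further inputs are asserted rather than proved.

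\begin{enumerate}
\item Torsion-freeness of $\mathcal T^{(n)}(\mathbf G)/\mathcal T^{(n+1)}(\mathbf G)$ is not automatic for $\Out$. These quotients embed in $\mathrm{Hom}(H_1\mathbf G,\gr_{n+1}\mathbf G)$ modulo the inner derivations $\mathrm{ad}(\gr_n\mathbf G)$. Controlling that quotient is exactly Asada's criterion (\cref{thm:Asada}): $Z(\gr\mathbf G)=0$, $Z(\gr\mathbf G\otimes\F_p)=0$, and $\gr\mathbf G$ free over $\Z_p$. The paper verifies these for centreless RAAGs via the pro-$p$ Magnus embedding and Wade's results. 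For RAAGs with centre, Asada's hypotheses fail, and the paper uses the Charney--Vogtmann isomorphism $\mathcal T^{(n)}(\mathbf A_\Gamma)\cong\mathcal T^{(n)}(\mathbf A_\Upsilon)$ (\cref{thm:prop_RTFN_Torelli}). Surface groups are covered by Asada directly.
\item Injectivity of $\Out(G)\to\Out(\mathbf G)$ does not come from the appendix. The appendix concerns free (partially commutative) nilpotent groups and says nothing about surface groups. For free groups and RAAGs it could at best be combined with residuality of the discrete outer Andreadakis--Johnson filtration. The paper instead uses that pointwise inner automorphisms are inner (Minasyan for RAAGs, Grossman for surface groups), together with conjugacy $p$-separability (Toinet, Paris).
\end{enumerate}
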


The torsion-free finite-index subgroup appearing in \cref{thm:intro_atiyah} is the kernel of the natural map $\Out(A_\Gamma) \to \GL_n(\F_p)$ for an odd prime $p$, where $n$ is the number of vertices in $\Gamma$ (or similarly the kernel of $\Mod(\Sigma_g) \to \Sp_{2g}(\F_p)$). As such, we can also conclude that $\Out(A_\Gamma)$ satisfies the Atiyah Conjecture of order $\frac{1}{|\GL_n(\F_p)|} \Z$ with coefficients in $\C$ (see \cite[Conjecture 10.3]{Luck02}). In the specific case of $\Out(F_3)$, \cref{thm:intro_atiyah} can also be deduced from work of McCool \cite{McCool}, who proved that it is virtually residually torsion-free nilpotent.

We already mentioned that for a given torsion-free group $G$, the Strong Atiyah Conjecture over $\K$ implies Kaplansky's Zero Divisor Conjecture for $\K[G]$. In this article we will consider a strengthening of the Zero Divisor Conjecture, which is also wide open.

\begin{conj}\label{conj:div_ring}
    If $G$ is a torsion-free group and $\K$ is a field, then $\K[G]$ embeds into a division ring.
\end{conj}

For $\K \subseteq \C$, Linnell showed that the Strong Atiyah Conjecture over $\K$ for a torsion-free group $G$ is equivalent to the division closure of $\K[G]$ in its algebra of affiliated operators being a division ring \cite{LinnellDivRings93}. Hence, \cref{conj:div_ring} can be thought of as a positive characteristic analogue of the Atiyah Conjecture. \cref{conj:div_ring} has been established for many classes of groups, including residually  \{torsion-free elementary amenable groups\} \cite{KrophollerLinnellMoody_Ore} and consequently torsion-free virtually compact special groups \cite{Schreve_AtiyahVCS}, torsion-free $3$-manifold groups \cite{fishersanchez_divrings}, and bi-orderable groups \cite{Malcev_series,Neumann_series}. A notable open special case of \cref{conj:div_ring} known as \emph{Malcev's Problem} asks about left orderable groups; this is open even for locally indicable groups.

Since $\GL_m(\Z)$ has torsion, its group algebra cannot be a domain. However, it is known from work of Linnell and Farkas \cite{FarkasLinnell2006}, building on the classical results of Lazard \cite{Lazard1954}, that if $C(p) \leqslant \GL_n(\Z)$  denotes the level $p$ (if $p > 2$) or level $p^2$ (if $p = 2$) principle congruence subgroup, then $\K[C(p)]$ admits an embedding into a division ring for any field of characteristic $p$ or $0$. Thus, $\GL_m(\Z)$ virtually satisfies \cref{conj:div_ring} over any field $\K$. Using a similar method of proof as in \cref{thm:intro_atiyah} we extend these embeddings to certain outer automorphism groups, and in particular exhibit torsion-free finite-index subgroups of $\Out(F_n)$ and $\Mod(\Sigma_g)$ satisfying the Zero Divisor Conjecture.

\begin{thm}\label{thm:intro_div_ring}
    Let $A_\Gamma$ be the RAAG on the finite simplicial graph $\Gamma$, let $\Sigma_{g,b,p}$ be a compact surface of genus $g$ with $b \geqslant 0$ boundary components and $p \geqslant 0$ marked points, and let $\K$ be a field. There are torsion-free finite-index subgroups $G \leqslant \Out(A_\Gamma)$ and $H \leqslant \Mod(\Sigma_{g,b,p})$ such that $\K[G]$ and $\K[H]$ embed into division rings. In particular, $\K[G]$ and $\K[H]$ satisfy the Zero Divisor Conjecture.
\end{thm}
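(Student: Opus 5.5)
The plan is to follow the abstract's strategy. We pass to pro-$p$ completions and prove the result for a suitable open subgroup $U$ of $\Out(\mathbf G)$, where $\mathbf G$ is the pro-$p$ completion of $G = A_\Gamma$ or $G = \pi_1(\Sigma_{g,b,p})$ (in the mapping class group case we use the Dehn--Nielsen--Baer identification of $\Mod(\Sigma_{g,b,p})$ with a subgroup of $\Out$ of the surface group, or of a free group when $b+p>0$). Fix a prime $p$: take $p$ odd when $\operatorname{char}\K = 0$, and take $p = \operatorname{char}\K$ when this is positive (replacing level $p$ by level $p^2$ if $p=2$). Let $H$ be the kernel of $\Out(G) \to \GL_n(\Z/p)$ acting on $H_1(G;\Z/p)$, or on $H_1(G;\Z/p^2)$ when $p=2$.

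The first step is to show that $\Out(G)$ is residually $p$ in a strong sense, so that $H$ injects into $\Out(\mathbf G)$. For this I would use the appendix result together with residual nilpotence of $G$: an automorphism of $G$ that becomes inner in $\mathbf G$ is already inner in every free nilpotent, or RAAG-nilpotent, quotient. Hence it is inner in $G$, by residual torsion-free nilpotence of $G$ and the fact that pointwise inner automorphisms of RAAGs and surface groups are inner.

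The second step is the structure theory of $U = \ker(\Out(\mathbf G) \to \GL_n(\F_p))$. Using the lower $p$-central (Zassenhaus) filtration of $\mathbf G$, $U$ acts on each finite quotient $\mathbf G/\gamma_k$ through a $p$-group. Taking the induced filtration, $U$ is a pro-$p$ group whose graded quotients are torsion-free $\Z_p$-modules, that is, $U$ is a (possibly infinite-dimensional) \emph{torsion-free filtered pro-$p$ group} in Lazard's sense. The associated graded object is a torsion-free graded Lie algebra, living inside the derivations of the graded Lie algebra of $\mathbf G$ modulo inner derivations. This Lie algebra of outer derivations is torsion-free precisely because the graded Lie algebra of $\mathbf G$ (free, partially commutative, or the surface one-relator Lie algebra) has trivial center and suitably torsion-free derivation modules. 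Establishing this, in particular for outer rather than all derivations, is where I expect the main obstacle. The appendix statement is the group-level shadow of this torsion-freeness.

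The third step applies Lazard's theorem in the form used by Farkas--Linnell and Jaikin-Zapirain. The completed group algebra $\F_p[[U]]$ (or $\Z_p[[U]]$) carries a filtration whose associated graded ring is a universal enveloping algebra of a torsion-free Lie algebra, hence a domain. This makes the completed algebra a domain with an Ore localisation, giving a division ring $\mathcal D$. For $\operatorname{char}\K = p$ this yields $\K[H] \hookrightarrow \K\otimes \F_p[[U]] \hookrightarrow$ a division ring, after a base change argument handling extensions of $\F_p$.

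For characteristic zero, and for \cref{thm:intro_atiyah}, we use Jaikin-Zapirain's base-change criterion: if $\K[H]$ embeds in $\Q_p$- or $\F_p$-completed algebras whose division closure is a division ring compatible with the Linnell ring, then $H$ satisfies the Strong Atiyah Conjecture. A general field of characteristic zero embeds into $\C$, so the statement reduces to subfields of $\C$, where Linnell's theorem gives the division ring. Since $H$ has finite index in $\Out(G)$ and is torsion-free, because $U$ is, this proves the theorem.
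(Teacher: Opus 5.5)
Your skeleton matches the paper's: work in $U=\mathcal T_p(\mathbf G)$ and pull back along $\Out(G)\hookrightarrow\Out(\mathbf G)$. However, the step that is supposed to produce the division ring does not work. The group $U$ is in general not $p$-adic analytic. For instance, for $\mathbf G=\mathbf F_r$ with $r\geqslant 3$, $U$ contains a closed non-abelian free pro-$p$ subgroup, namely the inner automorphisms of a free factor $\mathbf F_{r-1}$. So the Lazard/Farkas--Linnell Noetherianity, which is what supplies the Ore condition in the analytic case, is unavailable, and $\K\llbracket U\rrbracket$ contains a non-commutative power series ring in several variables. A domain-valued associated graded ring gives you at most that $\K\llbracket U\rrbracket$ is a domain. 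A domain need not embed in a division ring, and you have no source for the Ore localisation you assert.

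The missing idea is approximation. Each quotient $U/\mathcal T^{(k)}(\mathbf G)$ has a finitely generated torsion-free nilpotent pro-$p$ normal subgroup with quotient a torsion-free closed subgroup of $\GL_m(\Z_p)$. It is therefore torsion-free compact $p$-adic analytic, and $\K\llbracket U/\mathcal T^{(k)}(\mathbf G)\rrbracket$ lies in a division ring $\mathcal D_k$ by \cref{thm:p_adic_completed_domain}. Every open normal subgroup of $U$ contains some $\mathcal T^{(k)}(\mathbf G)$ (\cref{lem:contained_in_open}), so $\K\llbracket U\rrbracket$ embeds in the ultraproduct $\prod_\omega\mathcal D_k$. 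In characteristic zero, the same chain, together with the Strong Atiyah Conjecture for the analytic quotients and Jaikin-Zapirain's sofic approximation (\cref{thm:sofic_approx}), makes every von Neumann rank a limit of integers. The ``base-change criterion'' you invoke is not a result available in that form. Also, a field of characteristic zero need not embed in $\C$, since its cardinality may exceed $2^{\aleph_0}$. The paper reaches arbitrary such fields via the Independence Conjecture for sofic groups and Malcolmson's theorem (\cref{lem:sofic_SAC_all_fields}).

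The inputs to this approximation are also left open or misstated. You defer as the main obstacle the residuality of $\mathcal T^{(k)}(\mathbf G)$ in $\Out(\mathbf G)$ and the $\Z_p$-freeness of its graded pieces. This is Asada's criterion (\cref{thm:Asada}); for RAAGs the paper verifies its hypotheses via the pro-$p$ Magnus embedding and Wade's results on $\gr(A_\Gamma)$ and $\gr(A_\Gamma)\otimes\F_p$. Your claim that $\gr(\mathbf G)$ is always centreless is false for RAAGs with non-trivial centre (e.g.\ $F_2\times\Z$). In that case the paper needs the Charney--Vogtmann isomorphism $\mathcal T^{(k)}(\mathbf A_\Gamma)\cong\mathcal T^{(k)}(\mathbf A_\Upsilon)$.

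Finally, your first step fails as written, for two reasons. First, the appendix only treats the quotients $A_\Gamma/\gamma_{c+1}(A_\Gamma)$, so it says nothing about closed surface groups. Second, going from ``inner in every $G/\gamma_c(G)$'' to ``pointwise inner in $G$'' requires a conjugacy separability property, which residual torsion-free nilpotence does not give. Once you use conjugacy $p$-separability (Toinet for RAAGs, Paris for surface groups), the appendix is unnecessary. If $\varphi$ induces an inner automorphism of $\mathbf G$, then $g$ and $\varphi(g)$ are conjugate in every finite $p$-quotient, hence in $G$. Minasyan's (resp.\ Grossman's) theorem then makes $\varphi$ inner; this is exactly the paper's argument.
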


\subsection{Proof outline}

We give a short overview of the proof of the Atiyah and Zero Divisor Conjectures for $\Out(A_\Gamma)$, where $A_\Gamma$ is the RAAG on the simplicial graph $\Gamma$. The main idea is to leverage Farkas and Linnell's proof of the Atiyah and Zero Divisor Conjectures for torsion-free compact $p$-adic analytic groups \cite{FarkasLinnell2006} and a general form of the L\"uck Approximation Theorem due to Jaikin-Zapirain \cite{JaikinZapirain_basechange} in order to prove the Atiyah and Zero Divisor Conjectures for the larger group $\Out(\mathbf A_\Gamma)$ (or a suitable torsion-free subgroup of finite index), where $\mathbf A_\Gamma$ denotes the pro-$p$ completion of $A_\Gamma$. More precisely we have the following result, which strengthens \cref{thm:intro_atiyah,thm:intro_div_ring}.

\begin{thm}\label{thm:intro_main_pro_p_case}
    Let $\K$ be a field of characteristic $p$. The group $\Out(\mathbf A_\Gamma)$ has an open torsion-free pro-$p$ subgroup $\mathbf G$ such that 
    \begin{enumerate}
        \item $\mathbf G$ satisfies the Strong Atiyah Conjecture over $\C$ and
        \item the completed group algebra $\K\llbracket \mathbf G \rrbracket$ embeds into a division ring.
    \end{enumerate}
\end{thm}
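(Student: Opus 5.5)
The plan is to find a torsion-free pro-$p$ open subgroup $\mathbf G \leqslant \Out(\mathbf A_\Gamma)$ that is residually (torsion-free $p$-adic analytic), and then to combine Farkas--Linnell for compact $p$-adic analytic groups with Jaikin-Zapirain's approximation theorem.

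First, let $\mathbf A = \mathbf A_\Gamma$ be the pro-$p$ completion. Its lower central $p$-series, or better the lower central series closures $\gamma_k(\mathbf A)$, gives finitely generated nilpotent pro-$p$ quotients $\mathbf N_k = \mathbf A/\gamma_{k+1}(\mathbf A)$. These are torsion-free by Lazard and the fact that $A_\Gamma$ has torsion-free lower central quotients. Each $\mathbf N_k$ is a torsion-free compact $p$-adic analytic group, so $\Aut(\mathbf N_k)$ is $p$-adic analytic. Since the $\gamma_k$ are characteristic, there are maps $\Out(\mathbf A) \to \Out(\mathbf N_k)$.

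Take $\mathbf G$ to be the kernel of $\Out(\mathbf A) \to \GL_n(\F_p)$, with $p$ odd, or an analogous deeper congruence level when $p=2$. This kernel is pro-$p$, and its image in each $\Out(\mathbf N_k)$ lies in a uniform-type congruence subgroup, which I expect to be torsion-free. We then need two things:
\begin{enumerate}
    \item $\mathbf G$ is torsion-free.
    \item The images $\mathbf G_k$ of $\mathbf G$ in $\Out(\mathbf N_k)$ form a chain of torsion-free compact $p$-adic analytic quotients with $\bigcap_k \ker(\mathbf G \to \mathbf G_k) = 1$.
\end{enumerate}

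The key input for the second item is that an automorphism of $\mathbf A$ inducing inner automorphisms on all $\mathbf N_k$ is inner. This uses the appendix's result on free nilpotent groups for the free case, a RAAG analogue, and a compactness argument: the inner elements form compact sets in each $\mathbf N_k$, so an inverse limit argument produces a global conjugator. Torsion-freeness of $\mathbf G_k$ comes from the standard fact that automorphisms congruent to the identity mod $p$ on a torsion-free nilpotent pro-$p$ group of odd $p$ have no torsion; this has to be run on $\Out$, not just $\Aut$, which is delicate.

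Given this, part (1) follows as in Jaikin-Zapirain: each $\mathbf G_k$ satisfies the Strong Atiyah Conjecture over $\C$ by Farkas--Linnell, and his approximation theorem for chains of open normal subgroups, in its version for residual chains of quotients, transfers the integrality of ranks to $\mathbf G$. Part (2) follows from Farkas--Linnell: $\K\llbracket \mathbf G_k\rrbracket$ is a domain and its classical ring of quotients is a division ring. One then forms a division ring for $\K\llbracket \mathbf G\rrbracket$ as an ultraproduct or limit of these, using the Sylvester rank functions from the quotients. These converge because the rank functions are monotone along the chain, and the limit takes integer values. Jaikin-Zapirain's results on epic division rings and rank functions then produce the embedding.

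The main obstacle is the second step: showing that the images of $\mathbf G$ in $\Out(\mathbf N_k)$ are torsion-free and separate points, i.e.\ that "inner on every nilpotent quotient" implies inner for RAAGs and their pro-$p$ completions. I would first reduce to a free subgroup or use the structure of centralisers in RAAGs to pass from the free nilpotent case of the appendix to the general one.
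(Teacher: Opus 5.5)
Your subgroup $\mathbf G$ (the congruence kernel $\mathcal T_p(\mathbf A_\Gamma)$, level $4$ when $p=2$), your chain $\mathbf G_k=\mathbf G/\mathcal T^{(k)}(\mathbf A_\Gamma)$, and your endgame (Farkas--Linnell, Jaikin-Zapirain's approximation theorem, an ultraproduct) all match the paper. However, the step that carries the real content is left as an expectation: that every $\mathbf G_k$ is torsion-free.

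No choice of congruence level can help, because $\mathcal T(\mathbf A_\Gamma)$ lies in all of them. What is needed is that $\mathcal T(\mathbf A_\Gamma)/\mathcal T^{(k)}(\mathbf A_\Gamma)$ is torsion-free, i.e.\ that no quotient $\mathcal T^{(i)}(\mathbf A_\Gamma)/\mathcal T^{(i+1)}(\mathbf A_\Gamma)$ has $p$-torsion. Write $\gr_j=\gamma_j(\mathbf A_\Gamma)/\gamma_{j+1}(\mathbf A_\Gamma)$. The fact you cite is fine for $\Aut$: the Johnson map embeds $T^{(i)}/T^{(i+1)}$ in the free $\Z_p$-module $\mathrm{Hom}(\mathbf A_\Gamma^{\mathrm{ab}},\gr_{i+1})$, as in \cref{claim:andreadakis_johnson_torsionfree}. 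But it does not descend to $\Out$. Quotienting by inner automorphisms can a priori produce a class whose Johnson image $f$ is not inner while $pf=\mathrm{ad}(y)$ for some $y\in\gr_i$. Excluding this needs two facts:
\begin{itemize}
\item $Z(\gr(\mathbf A_\Gamma))=0$, so that $T^{(i)}\cap\Inn(\mathbf A_\Gamma)$ consists of conjugations by elements of $\gamma_i(\mathbf A_\Gamma)$;
\item $Z(\gr(\mathbf A_\Gamma)\otimes_{\Z_p}\F_p)=0$, so that $\mathrm{ad}(y)\equiv 0$ mod $p$ forces $y=py'$ with $y'\in\gr_i$, whence $f=\mathrm{ad}(y')$ because the Hom-module is torsion-free.
\end{itemize}
This is Asada's criterion, \cref{thm:Asada}, and establishing it is the main work of the paper. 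The pro-$p$ Magnus embedding (\cref{thm:propMagnus}) together with Wade's coefficient result gives \cref{cor:trivial_centres}. These centres are non-zero whenever $A_\Gamma$ has centre, so the Charney--Vogtmann isomorphism $\mathcal T^{(n)}(\mathbf A_\Gamma)\cong\mathcal T^{(n)}(\mathbf A_\Upsilon)$ of \cref{thm:prop_RTFN_Torelli} is also needed to reduce to the centreless case. Without all this you have neither torsion-freeness of $\mathbf G$ nor the hypotheses of Farkas--Linnell for $\mathbf G_k$.

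You also misplace the obstacle. Separation of points needs neither the appendix nor centralisers in RAAGs. The appendix concerns injectivity of $\Out(N)\to\Out(\mathbf N)$ for \emph{discrete} $N$ and is irrelevant to this pro-$p$ statement. Your compactness argument does the job on its own. If $[\varphi]\in\bigcap_k\mathcal T^{(k)}(\mathbf A_\Gamma)$, the sets $C_k=\{x\in\mathbf A_\Gamma : \varphi(a)\equiv a^x \bmod \gamma_{k+1}(\mathbf A_\Gamma) \text{ for all } a\}$ are non-empty, closed and nested in the compact group $\mathbf A_\Gamma$. Any $x\in\bigcap_k C_k$ then gives $\varphi=(\cdot)^x$, since $\bigcap_k\gamma_k(\mathbf A_\Gamma)=1$.

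Finally, for part (2) drop the rank-function limit; the asserted monotonicity is neither justified nor needed. The plain ultraproduct of the Ore division rings $\mathcal D_k$ of $\K\llbracket\mathbf G_k\rrbracket$ works, once you note via \cref{lem:contained_in_open} that every open subgroup of $\mathbf G$ contains some $\ker(\mathbf G\to\mathbf G_k)$. Then a non-zero element of $\K\llbracket\mathbf G\rrbracket$ stays non-zero in $\K\llbracket\mathbf G_k\rrbracket$ for all large $k$.
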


We formulated the Strong Atiyah Conjecture for countable groups, but $\mathbf A_\Gamma$ is uncountable if it is non-trivial. We will say that an uncountable group satisfies the Strong Atiyah Conjecture over $\K$ if all its finitely generated subgroups do.

We establish \cref{thm:intro_main_pro_p_case} as follows. In \cref{thm:propMagnus}, we show that $\mathbf A_\Gamma$ admits a Magnus representation as a multiplicative group of power series in partially commuting variables with $\Z_p$ coefficients. This is the key tool we use to show that the groups
\[
    \mathcal T^{(i)}(\mathbf A_\Gamma) = \ker\big( \Out(\mathbf A_\Gamma) \to \Out(\mathbf A_\Gamma/\gamma_{i+1} (\mathbf A_\Gamma))\big)
\]
form a residual chain inside $\Out(\mathbf A_\Gamma)$ and that the consecutive quotients 
\[
    \mathcal T^{(i)}(\mathbf A_\Gamma)/\mathcal T^{(i+1)}(\mathbf A_\Gamma)
\]
are finitely generated torsion-free Abelian pro-$p$ groups (see \cref{thm:prop_RTFN_Torelli}). In particular, we have the following result which is the pro-$p$ version of the analogous result in the discrete case due independently to Toinet \cite[Theorem 7.14]{Toinet2013} and Wade \cite[Theorem 4.9]{Wade_JohnsonHomsHigherRank}.

\begin{thm}\label{thm:TorelliRTFN}
    For any pro-$p$ RAAG $\mathbf A_\Gamma$, the Torelli subgroup $\mathcal T(\mathbf A_\Gamma)$ of $\Out(\mathbf A_\Gamma)$ is residually torsion-free nilpotent.
\end{thm}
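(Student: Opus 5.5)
We prove \cref{thm:TorelliRTFN} by the Andreadakis--Johnson filtration, exactly as in the discrete case of Toinet and Wade, with the pro-$p$ Magnus representation (\cref{thm:propMagnus}) in place of the classical one. The Torelli subgroup is $\mathcal T(\mathbf A_\Gamma) = \mathcal T^{(1)}(\mathbf A_\Gamma)$, the kernel of the action on the abelianisation $\mathbf A_\Gamma/\gamma_2(\mathbf A_\Gamma) \cong \Z_p^n$. It suffices to prove three things.
\begin{enumerate}
    \item The filtration $\mathcal T^{(i)}$ is central: $[\mathcal T^{(i)},\mathcal T^{(j)}] \subseteq \mathcal T^{(i+j)}$.
    \item It is residual: $\bigcap_i \mathcal T^{(i)} = 1$.
    \item Each quotient $\mathcal T^{(i)}/\mathcal T^{(i+1)}$ is torsion-free abelian.
\end{enumerate}
Given these, $\mathcal T/\mathcal T^{(i)}$ is a nilpotent group with a central series whose factors are torsion-free. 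Such a group is torsion-free: an element of finite order lying in $\mathcal T^{(k)}\setminus\mathcal T^{(k+1)}$ would give a torsion element in the $k$-th factor. Residuality then finishes the proof.

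\textbf{Centrality.} Work first at the level of $\Aut$, with $\mathcal A^{(i)}$ the automorphisms acting trivially on $\mathbf A_\Gamma/\gamma_{i+1}$. The standard three-subgroup/commutator computation shows that $\phi\in\mathcal A^{(i)}$ satisfies $\phi(x)x^{-1}\in\gamma_{i+k}$ for $x\in\gamma_k$, and this gives $[\mathcal A^{(i)},\mathcal A^{(j)}]\subseteq\mathcal A^{(i+j)}$. Since $\mathcal T^{(i)}$ is the image of $\mathcal A^{(i)}\Inn(\mathbf A_\Gamma)$, the inclusion passes to $\Out$. Here one should use closed lower central series terms, and these agree with the abstract ones because $\mathbf A_\Gamma$ is finitely generated pro-$p$.

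\textbf{Torsion-free quotients.} Choose representatives of $\mathcal T^{(i)}$ in $\mathcal A^{(i)}$. The Johnson homomorphism $\tau_i\colon \mathcal A^{(i)}\to \mathrm{Hom}(H,\gamma_{i+1}/\gamma_{i+2})$, given by $\phi\mapsto(x\mapsto \phi(x)x^{-1})$, has kernel $\mathcal A^{(i+1)}$. It induces an injection
\[
\mathcal T^{(i)}/\mathcal T^{(i+1)} \hookrightarrow \mathrm{Hom}(H,\gamma_{i+1}/\gamma_{i+2})\big/\tau_i\bigl(\mathcal A^{(i)}\cap \Inn\cdot\mathcal A^{(i+1)}\bigr).
\]
Identify $\gamma_{i+1}/\gamma_{i+2}$ with the degree-$(i+1)$ part of the free partially commutative Lie algebra over $\Z_p$, which is a free $\Z_p$-module; the Magnus representation gives this identification. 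The inner part is the image of $\mathrm{ad}\colon \gamma_i/\gamma_{i+1}\to\mathrm{Hom}(H,\gamma_{i+1}/\gamma_{i+2})$. So torsion-freeness reduces to showing that this image is a saturated (pure) $\Z_p$-submodule. Equivalently: if $p\cdot f=\mathrm{ad}(u)$ then $u\in p\,(\gamma_i/\gamma_{i+1})$ modulo the kernel of $\mathrm{ad}$. The centraliser of an element in the Lie algebra of a RAAG is controlled by links and stars; I would verify saturation directly using the Magnus embedding into partially commuting power series, where $[x_v,u]$ can be read off coefficientwise.

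\textbf{Residuality.} This is the main obstacle. One must show that an outer automorphism acting as an inner automorphism $c_{g_i}$ modulo $\gamma_{i+1}$, for every $i$, is itself inner. First choose the $g_i$ compatibly using compactness: the sets of valid $g_i$ are nonempty closed subsets forming an inverse system, so some $g\in\mathbf A_\Gamma$ works for all $i$ simultaneously. Then $\phi c_g^{-1}$ acts trivially modulo every $\gamma_{i+1}$, and hence trivially, because $\bigcap\gamma_i=1$. This intersection is trivial because the Magnus representation is faithful and the $\gamma_i$ map into the ideals of power series of degree $\ge i$. The delicate point is whether compactness applies: the conjugating elements are only determined modulo centralisers in the nilpotent quotients. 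Closedness of these sets, together with $\mathbf A_\Gamma$ being profinite, handles this. This is the step I would check most carefully.
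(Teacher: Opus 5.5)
Your outline is sound, but it is not the paper's route. The paper never runs the Johnson-homomorphism argument itself. For centreless $\Gamma$ it checks the three hypotheses of \cref{thm:Asada} via the pro-$p$ Magnus embedding (\cref{cor:trivial_centres}) and takes residuality and freeness of the graded quotients from Asada as a black box. It then removes the centreless assumption with the Charney--Vogtmann isomorphism $\mathcal T^{(n)}(\mathbf A_\Gamma)\cong\mathcal T^{(n)}(\mathbf A_\Upsilon)$ of \cref{thm:prop_RTFN_Torelli}.

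You instead reprove Asada inline, and your residuality step is a genuine simplification. The sets of valid conjugators are non-empty, closed and nested, so compactness yields a single conjugator. This needs no hypothesis on centres and works for any finitely generated pro-$p$ group. You do not even need the Magnus embedding for $\bigcap_i\gamma_i(\mathbf A_\Gamma)=1$, since pro-$p$ groups are pro-nilpotent.

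The cost lands in the torsion-freeness step, where two inputs are asserted rather than proved, and they are exactly what the paper extracts from the Magnus embedding.

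First, ``the inner part is $\mathrm{ad}(\gamma_i/\gamma_{i+1})$'' is not automatic for $i\geqslant 2$. The group $\mathcal A^{(i)}\cap\Inn(\mathbf A_\Gamma)$ consists of the $c_g$ with $g$ central modulo $\gamma_{i+1}$. To get $g\in Z(\mathbf A_\Gamma)\gamma_i(\mathbf A_\Gamma)$ you need $Z(\gr(\mathbf A_\Gamma))$ to be spanned by the central generators. This is hypothesis (1) of \cref{thm:Asada}, i.e.\ \cref{cor:trivial_centres}\ref{item:centre_Lie_prop} applied to $\Upsilon$.

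Second, saturation is equivalent to the same statement for $\gr(\mathbf A_\Gamma)\otimes_{\Z_p}\F_p$ (hypothesis (2), i.e.\ \cref{cor:trivial_centres}\ref{item:centre_modp} applied to $\Upsilon$). If you read this off coefficientwise in $\Z_p\langle\Gamma\rangle$ via \cref{prop:non-trivial-coefficient}, you also need purity, $(\Z_p\otimes\nu)(\gr(\mathbf A_\Gamma))\cap p\,\Z_p\langle\Gamma\rangle=p\,(\Z_p\otimes\nu)(\gr(\mathbf A_\Gamma))$. This is \cref{thm:LieAlg_multiples}\ref{item:multiples} tensored with $\Z_p$, and it is what lets you pass from $u\in p\,\Z_p\langle\Gamma\rangle$ to $u\in p\,(\gamma_i/\gamma_{i+1})$.

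Working modulo $\ker\mathrm{ad}$ lets you bypass the Out-level Charney--Vogtmann reduction. However, these centre computations for RAAGs with centre still rest on the splitting $\mathbf A_\Gamma\cong\mathbf A_\Lambda\times\mathbf A_\Upsilon$.

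In short, your approach is more self-contained and shows that the Magnus embedding is needed only for torsion-freeness, not for residuality. The paper's is shorter given Asada, and it directly supplies the finitely generated free $\Z_p$-module quotients used later for $p$-adic analyticity.
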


The groups $\gamma_i(\mathbf A_\Gamma)$ are the terms of the lower central series of $\mathbf A_\Gamma$ and the subgroups $\mathcal T^{(i)}(\mathbf A_\Gamma)$ form the \emph{Andreadakis--Johnson filtration} of $\Out(\mathbf A_\Gamma)$. In the discrete case, the fact that it is residual and has finite generated Abelian consecutive quotients was established by Wade in \cite{Wade_JohnsonHomsHigherRank} (see also \cite{Andreadakis1965} for the free group case, and \cite{Asada1995} and \cite{Johnson_survey} for the surface group case). The idea of studying a group using this filtration was formalised and exploited by Bass and Lubotzky in their work on linear central filtrations \cite{BassLubotzky1994}. Asada developed the idea for pro-$p$ groups in \cite{Asada1995}, and we use his results crucially in proving \cref{thm:TorelliRTFN}.

Thus, $\Out(\mathbf A_\Gamma)$ is approximated by the groups $\Out(\mathbf A_\Gamma)/\mathcal T^{(i)}(\mathbf A_\Gamma)$, each of which is compact $p$-adic analytic (using the fact that extensions of $p$-adic analytic groups are $p$-adic analytic). \cref{thm:intro_main_pro_p_case} can then be deduced from the Atiyah and Zero Divisor Conjectures for torsion-free compact $p$-adic analytic group \cite{FarkasLinnell2006} together with a sofic approximation theorem for von Neumann rank functions \cite{JaikinZapirain_basechange}. In the same manner, we will obtain similar results for automorphism groups of many residually torsion-free nilpotent groups (see \cref{cor:aut_atiyah}).

To obtain our main results in the discrete case, we only need for the natural map $\Out(A_\Gamma) \to \Out(\mathbf A_\Gamma)$ to be injective, which is equivalent to the claim that any automorphism of $A_\Gamma$ which induces an inner automorphism of all finite $p$-quotients of $A_\Gamma$ is in fact inner. For RAAGs, it follows from conjugacy $p$-separability \cite[Theorem 6.15]{Toinet2013} and the fact that their pointwise inner automorphisms are inner \cite[Proposition 6.9]{Minasyan2012}. It appears that the property that the natural map $\Out(G) \rightarrow \Out(\mathbf G)$ being injective (for $G$ a residually $p$ group and $\mathbf G$ its pro-$p$ completion) was first explicitly considered by Segal in \cite{Segal90}, where he gave examples showing that it fails for certain finitely generated torsion-free nilpotent groups. In an appendix, the first author establishes this injectivity for free nilpotent groups, which form a special class of torsion-free nilpotent groups.

\subsection{Organisation of the paper}

In \cref{sec:prelims} we recall some preliminary notions, in particular on the Atiyah Conjecture and the theory of $p$-adic analytic groups. In \cref{sec:magnus_RAAG}, we show that pro-$p$ completions of RAAGs admit a version of the Magnus embedding into a ring of power series in partially commuting variables and coefficients in $\Z_p$, generalising the embedding of Lazard for a free pro-$p$ group \cite{Lazard1954}. This is then used to prove that the Torelli group of a pro-$p$ RAAG is residually torsion-free nilpotent in \cref{thm:prop_RTFN_Torelli}. In \cref{sec:main_results}, we prove our main results, namely Theorems \ref{thm:intro_atiyah}, \ref{thm:intro_div_ring}, and \ref{thm:intro_main_pro_p_case}. Finally, in an appendix the first author proves that $p$-inner automorphisms of free nilpotent groups are inner.

\subsection{Acknowledgments}

The authors are grateful to Andrei Jaikin-Zapirain for many useful conversations and suggestions that led to the completion of this work, and thank Ric Wade for comments on an earlier version of this article. The first author is grateful to Dan Segal for a helpful correspondence. The first author is supported by  the grants CEX2023-001347-S and EUR2025-164928 of the Ministry of Science, Innovation, and Universities of Spain. The second author is supported by funding from the European Union (ERC, SATURN, 101076148) and the Deutsche Forschungsgemeinschaft (EXC-2047/1 - 390685813).

\section{Preliminaries}\label{sec:prelims}

\subsection{Notation and conventions}

We fix the notation we will use throughout the paper. Let $G$ be a group and let $a,b \in G$. We put $a^b := b\inv a b$ (the \emph{conjugate} of $a$ by $b$) and $[a,b] := a\inv b\inv a b$ (the \emph{commutator} of $a$ and $b$). If $A$ and $B$ are subgroups of $G$, then we write $[A,B]$ for the subgroup generated by the elements $[a,b]$ for $a \in A$ and $b \in B$.

The \emph{lower central series} $(\gamma_n(G))_{n\in\N}$ of $G$ is defined recursively as follows: 
\[
    \gamma_1(G) := G, \quad \gamma_n(G) := [G,\gamma_{n-1}(G)] \quad \text{for} \ n > 1.
\]
Note that the subgroup $\gamma_n(G)$ is characteristic in $G$ for all $n \in \N$. The group $G$ is \emph{nilpotent} if $\gamma_n(G) = \{1\}$ for some $n \in \N$, and it is \emph{residually nilpotent} if $\bigcap_{n\in\N} \gamma_n(G) = \{1\}$.

Rings are always assumed to be unital and associative, and ring homomorphisms preserve the unit. We will occasionally consider Lie algebras, which are not associative, but we do not consider them as rings, instead forming a different category. If $R$ is a ring and $x,y \in R$, we write $[x,y] := xy - yx$.

For an arbitrary but fixed prime $p$, if a group is denoted by a letter, we will usually denote the pro-$p$ completion of that group by the same letter in bold font. For instance, the pro-$p$ completions of $G$ and $H$ are denoted by $\mathbf G$ and $\mathbf H$ respectively. In many instances groups will have subscripts attached to them. For example if $A_\Gamma$ is the right-angled Artin group on $\Gamma$, then we will denote its pro-$p$ completion by $\mathbf A_\Gamma$. We will use $\Z_p$ and $\Q_p$ to denote the $p$-adic integers and $p$-adic rational numbers respectively.

\subsection{The Andreadakis--Johnson filtration}

Given a group $G$, we now describe filtrations of $\Aut(G)$ and $\Out(G)$ originally studied by Andreadakis and Johnson \cite{Andreadakis1965, Johnson_survey}. For these filtrations to give useful information about the outer automorphism group, a minimum requirement is that $G$ be residually nilpotent. However, in practice even stronger hypotheses on $G$ are needed (see \cref{thm:Asada} below). If $G$ is finitely generated pro-$p$ group, then in all the definitions given below the reader should keep in mind that $\Aut(G)$ denotes the group of all continuous automorphisms of $G$.

Since the subgroups $\gamma_n(G)$ are all characteristic in $G$, there are natural homomorphisms
\[
    \varphi_i \colon \Aut(G) \to \Aut(G/\gamma_n(G)) \quad \text{and} \quad \ \overline{\varphi_i} \colon \Out(G) \to \Out(G/\gamma_n(G))
\]
for all $n \in \N$. The $n$th terms of the \emph{Andreadakis--Johnson filtration} of $\Aut(G)$ and $\Out(G)$ are the subgroups 
\[
    T^{(n)}(G) := \ker(\varphi_{n+1}) \quad \text{and} \quad \mathcal T^{(n)}(G) := \ker(\overline{\varphi_{n+1}}),
\]
respectively. The terms $T^{(1)}(G)$ and $\mathcal T^{(1)}(G)$ correspond to the kernels of the action on the Abelianisation of $G$, and are called the \emph{Torelli groups} of $\Aut(G)$ and $\Out(G)$, respectively. We will write $T(G) = T^{(1)}(G)$ and $\mathcal T(G) = \mathcal T^{(1)}(G)$.

We will also consider the modulo $p$ version of the Torelli group:
\[
    T_p(G) := \ker\left(\Aut(G) \to \Aut\big(G/\gamma_2(G)G^{p^\varepsilon}\big)\right)
\]
and 
\[
    \mathcal T_p(G) := \ker\left(\Out(G) \to \Out\big(G/\gamma_2(G)G^{p^\varepsilon}\big)\right)
\]
where $\varepsilon = 1$ if $p > 2$ and $\varepsilon = 2$ if $p = 2$.

It will be important that the Johnson filtration is residual and has finitely generated Abelian consecutive quotients. To ensure this we will use the following criterion of Asada.

\begin{thm}[{\cite{Asada1995}}]\label{thm:Asada}
    Let $\mathbf G$ be a finitely generated pro-$p$ group. If
    \begin{enumerate}
        \item $Z(\gr(\mathbf G)) = \{0\}$,
        \item $Z(\gr(\mathbf G) \otimes_{\Z_p} \F_p) = \{0\}$, and
        \item $\gr(\mathbf G)$ is free as a $\Z_p$-module,
    \end{enumerate}
    then $\bigcap_{n\in \N} \mathcal T^{(n)}(\mathbf G) = \{1\}$ and $\mathcal T^{(n)}(\mathbf G)/\mathcal T^{(n+1)}(\mathbf G)$ is a finitely generated free $\Z_p$-module for all $n \in \N$.
\end{thm}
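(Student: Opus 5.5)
We would prove this via the Johnson homomorphisms, working first in $\Aut(\mathbf G)$ and then descending to $\Out(\mathbf G)$. Write $\gr_k = \gamma_k(\mathbf G)/\gamma_{k+1}(\mathbf G)$, so that $\gr(\mathbf G) = \bigoplus_k \gr_k$ is a Lie algebra over $\Z_p$ generated by $\gr_1$. Since $\mathbf G$ is finitely generated pro-$p$, each $\gamma_k(\mathbf G)$ is closed, each $\gr_k$ is a finitely generated $\Z_p$-module, and $\bigcap_k \gamma_k(\mathbf G)=\{1\}$.

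For $\varphi \in T^{(n)}(\mathbf G)$, define $\tau_n(\varphi)\colon \gr_1 \to \gr_{n+1}$ by $x \mapsto x^{-1}\varphi(x) \bmod \gamma_{n+2}$. The standard commutator calculus shows the following.
\begin{enumerate}
    \item $\tau_n(\varphi)$ is a well-defined $\Z_p$-linear map.
    \item $\tau_n$ is a homomorphism $T^{(n)} \to \mathrm{Hom}(\gr_1,\gr_{n+1})$.
    \item $\ker \tau_n = T^{(n+1)}$, since $\varphi$ acts trivially modulo $\gamma_{n+2}$ once it does so on generators.
\end{enumerate}
The target is a finitely generated free $\Z_p$-module, using hypothesis (3). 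For conjugation $c_g$ by an element $g\in\gamma_n$, one gets $\tau_n(c_g) = \mathrm{ad}(\bar g)|_{\gr_1}$ up to sign.

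\textbf{Key lemma (from (1)).} If $c_g \in T^{(n)}$, then $g \in \gamma_{n}(\mathbf G)$. Indeed, suppose $g \in \gamma_k \setminus \gamma_{k+1}$ with $k<n$, or more generally $g \neq 1$ with $k < n$. Then $\mathrm{ad}(\bar g)$ vanishes on $\gr_1$, hence on all of $\gr$, since $\gr_1$ generates $\gr$. So $\bar g \neq 0$ is central, contradicting $Z(\gr)=0$. In particular $Z(\mathbf G)=1$.

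\textbf{Descending to $\Out$.} Let $\phi \in \mathcal T^{(n)}$. Any representative acts on $\mathbf G/\gamma_{n+1}$ as conjugation by some element, and correcting by an inner automorphism gives a representative $\varphi \in T^{(n)}$. By the key lemma, two such representatives differ by $c_g$ with $g\in\gamma_n$. Hence $\bar\tau_n(\phi) := [\tau_n(\varphi)]$ gives a well-defined homomorphism
\[
    \bar\tau_n\colon \mathcal T^{(n)} \to M_n := \mathrm{Hom}(\gr_1,\gr_{n+1})/\mathrm{ad}(\gr_n).
\]
Its kernel is $\mathcal T^{(n+1)}$. One inclusion: if $\tau_n(\varphi) = \mathrm{ad}(\bar y)$, then $\varphi c_y^{-1} \in T^{(n+1)}$. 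The other inclusion: a representative in $T^{(n+1)}$ maps to $0$. Therefore $\mathcal T^{(n)}/\mathcal T^{(n+1)}$ embeds in $M_n$, which is finitely generated, and closed subgroups of finitely generated free $\Z_p$-modules are free.

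It remains to show that $M_n$ is torsion-free, and this is where (2) enters. Suppose $p f = \mathrm{ad}(y)$ with $y \in \gr_n$. Reducing modulo $p$, $\mathrm{ad}(y\otimes 1)$ kills $\gr_1\otimes\F_p$, hence all of $\gr\otimes\F_p$. By (2) this forces $y \otimes 1 = 0$, and since $\gr$ is free, $y = p y'$. Then $p(f - \mathrm{ad}(y')) = 0$ in the free module $\mathrm{Hom}(\gr_1,\gr_{n+1})$, so $f = \mathrm{ad}(y')$.

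\textbf{Residuality.} Let $\phi \in \bigcap_n \mathcal T^{(n)}$ with a fixed representative $\psi$. Choose $g_n$ with $\psi c_{g_n} \in T^{(n)}$. Then for $m\geqslant n$, $c_{g_n^{-1}g_m} \in T^{(n)}$, so by the key lemma $g_n^{-1}g_m\in\gamma_n$. Since the $\gamma_n$ form a basis of neighbourhoods of $1$ in $\mathbf G$ (finite generation), the sequence $(g_n)$ is Cauchy and converges to some $g$. Then $\psi c_g \in \bigcap_n T^{(n)}$. Since $\bigcap_n \gamma_n=\{1\}$, this intersection is trivial, so $\phi$ is trivial in $\Out$.

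The main obstacle is this descent from $\Aut$ to $\Out$: controlling the inner automorphisms both in the graded quotients (the torsion-freeness of $M_n$) and in the limit argument. Hypotheses (1) and (2) are used exactly there.
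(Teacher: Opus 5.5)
The paper does not prove this statement; it is quoted from Asada. Your argument is the standard one behind it. It is the $\Out$-analogue of the paper's own Johnson-homomorphism argument for $T^{(n)}(\mathbf G)$ in \cref{claim:andreadakis_johnson_torsionfree}. Hypothesis (1) identifies $T^{(n)}(\mathbf G)\cap\Inn(\mathbf G)$ with conjugations by elements of $\gamma_n(\mathbf G)$, and hypothesis (2) makes $M_n=\mathrm{Hom}(\gr_1,\gr_{n+1})/\mathrm{ad}(\gr_n)$ torsion-free. The proof is correct, up to two points of justification.

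\textbf{The $\gamma_n(\mathbf G)$ are not a neighbourhood basis.} In the residuality step you claim they form a basis of neighbourhoods of $1$, but they are closed of infinite index, hence not open. For non-trivial $\mathbf G$, already $\mathbf G/\gamma_2(\mathbf G)=\gr_1$ is a non-zero free $\Z_p$-module.

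What you actually need is true: every open normal subgroup $U\trianglelefteqslant_o\mathbf G$ contains some $\gamma_N(\mathbf G)$. This holds because $\mathbf G/U$ is a finite $p$-group and hence nilpotent; it is also \cref{lem:contained_in_open} with $H_n=\gamma_n(\mathbf G)$. With this, $g_n^{-1}g_m\in\gamma_n(\mathbf G)$ for $m\geqslant n$ does make $(g_n)$ Cauchy.

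Alternatively, take any accumulation point $g$ of $(g_n)$. Since the cosets $g_n\gamma_n(\mathbf G)$ are closed, $g\in g_n\gamma_n(\mathbf G)$ for every $n$. This gives $\psi c_g\in T^{(n)}(\mathbf G)$ for all $n$ directly.

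\textbf{Closedness of the image of $\bar\tau_n$.} The embedding $\mathcal T^{(n)}/\mathcal T^{(n+1)}\hookrightarrow M_n$ yields a $\Z_p$-module only once you know the image is closed. An arbitrary subgroup of $\Z_p$, such as $\Z$, is not a $\Z_p$-submodule.

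Closedness does hold. $T^{(n)}(\mathbf G)$ is a closed, hence compact, subgroup of the profinite group $\Aut(\mathbf G)$. The map $\tau_n$ is continuous, being determined by the finitely many values $x_i^{-1}\varphi(x_i)\gamma_{n+2}(\mathbf G)$ on topological generators. So the image of $\bar\tau_n$ is compact, and hence a closed $\Z_p$-submodule of the free module $M_n$. Your final step then applies.
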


The object $\gr(\mathbf G)$ appearing in \cref{thm:Asada} is the graded Lie algebra associated to the lower central series of $\mathbf G$. Its definition is recalled more carefully in \cref{def:Lie_algebras}.

\subsection{\texorpdfstring{$L^2$}{L²}-invariants and the Atiyah Conjecture}\label{subsec:atiyah}

The standard reference for this material is L\"uck's book \cite{Luck02}, to which we refer the reader for details not covered here. Let $G$ be a countable group and let
\[
    \ell^2(G) = \left\{\sum_{g \in G} \alpha_g g \ : \ \alpha_g \in \C, \ \sum_{g \in G} |\alpha_g|^2 < \infty \right\}
\]
be the space of square summable series of group elements with complex coefficients. Then $\ell^2(G)$ is naturally a Hilbert space with inner product given by 
\[
    \left\langle \sum_{g \in G} \alpha_g g, \sum_{g \in G} \beta_g g \right\rangle := \sum_{g \in G} \alpha_b \overline{\beta_g}.
\]

The left regular action of $G$ on $\ell^2(G)$ extends to a component-wise action of $G$ on $\ell^2(G)^n$. For any $n \in \N$, a closed $G$-invariant subspace $V$ of $\ell^2(G)^n$ is called a \emph{Hilbert $G$-module}. If $\pi_V \colon \ell^2(G)^n \to \ell^2(G)^n$ denotes the orthogonal projection onto $V$, then the \emph{von Neumann dimension} of $V$ is given by the trace of $\pi_V$, defined by
\[
    \dim_{\mathcal N(G)} (V) := \sum_{i=1}^n \langle 1_i, \pi_V(1_i) \rangle,
\]
where $1_i$ denotes the vector of $\ell^2(G)$ that consists of the identity element $1$ in the $i$th component. It is a standard exercise to verify that $\dim_{\mathcal N(G)}(V)$ does not depend on the embedding of $V$ into some $\ell^2(G)^m$ as a closed subspace.

An $m \times n$ matrix $A$ over $\C[G]$ induces a linear map $\varphi_A \colon \ell^2(G)^m \to \ell^2(G)^n$ via right multiplication. Hence $\im \varphi_A$ is a (possibly non-closed) $G$-invariant subspace of $\ell^2(G)^n$. The \emph{von Neumann rank} of $A$ is
\[
    \rk_{\mathcal N(G)} (A) := \dim_{\mathcal N(G)} \left( \overline{\im \varphi_A} \right),
\]
where $\overline{\im \varphi_A}$ denotes the closure of $\im \varphi_A$. Everything is now in place for a complete statement of the Strong Atiyah Conjecture given in the introduction (see \cref{conj:SAC}): if $G$ is a group with an upper bound on the orders of its finite subgroups, then the Strong Atiyah Conjecture over a subfield $\K \subseteq \C$ predicts that $\rk_{\mathcal N(G)} (A) \in \frac{1}{\mathrm{lcm}(G)} \Z$ for all finite matrices $A$ with coefficients in $\K[G]$.

We now recall the connection between the Atiyah and Zero Divisor Conjectures. The \emph{von Neumann algebra} of $G$ is the algebra of bounded operators acting on $\ell^2(G)$ and commuting with the left $G$-action:
\[
    \mathcal N(G) := \left\{ A \in \mathcal B(\ell^2(G)) : (gx)A = g(xA) \ \text{for all} \ x \in \ell^2(G) \ \text{and} \ g \in G \right\}.
\]
It is immediate from the definition that $\C[G] \subseteq \mathcal N(G)$.
By a result of Berberian \cite{Berberian_vonNeumannOre}, $\mathcal N(G)$ satisfies the left and right Ore conditions with respect to its set of non-(zero divisors). The (left) Ore localisation of $\mathcal N(G)$ at the set of non-(zero divisors) is called the \emph{algebra of affiliated operators} of $G$ and is denoted by $\mathcal U(G)$. In summary, we have the following subring inclusions: $\C[G] \subseteq \mathcal N(G) \subseteq \mathcal U(G)$.

If $R$ is a subring of $S$, then the \emph{division closure} of $R$ in $S$ is the smallest subring $\mathcal D(R \subseteq S)$ of $S$ containing $R$ such that if $x$ is a unit of $S$ and $x \in \mathcal D(R \subseteq S)$, then $x\inv \in \mathcal D(R \subseteq S)$. The connection between \cref{conj:SAC,conj:div_ring} is given by the following fundamental result of Linnell.

\begin{thm}[{\cite{LinnellDivRings93}}]\label{thm:Linnell}
    If $G$ is a torsion-free group, then $G$ satisfies the Strong Atiyah Conjecture over $\K \subseteq \C$ if and only if $\mathcal D(\K[G] \subseteq \mathcal U(G))$ is a division ring. 
\end{thm}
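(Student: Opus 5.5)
The plan is to work throughout with the rank function on $\mathcal U(G)$ extending $\rk_{\mathcal N(G)}$. I will use three standard facts about $\mathcal U(G)$ (see \cite{Luck02}). First, $\mathcal U(G)$ is a $*$-regular ring, and every finite matrix $M$ over $\mathcal U(G)$ has a rank $\rk_{\mathcal U(G)}(M)$. This rank equals the von Neumann dimension of the closure of the image after clearing denominators; in particular it agrees with $\rk_{\mathcal N(G)}$ on $\Mat(\C[G])$. Second, this rank is unchanged under multiplication on either side by invertible matrices over $\mathcal U(G)$, and it is additive on block-diagonal sums. Third, for a single element $x \in \mathcal U(G)$ we have $0 \leqslant \rk(x) \leqslant 1$, with $x$ invertible in $\mathcal U(G)$ if and only if $\rk(x) = 1$. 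This follows from polar decomposition: $x$ is a unit times a projection whose trace is $\rk(x)$, and $\mathcal N(G)$ is finite, so injective operators are invertible in $\mathcal U(G)$.

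For the direction ($\Leftarrow$), suppose $D := \mathcal D(\K[G] \subseteq \mathcal U(G))$ is a division ring, and let $A$ be a finite matrix over $\K[G]$. I will use Gaussian elimination over $D$. Since $D$ is a division ring, there are invertible matrices $P, Q$ over $D$ with $PAQ$ a diagonal matrix with entries in $\{0,1\}$. Invertible matrices over $D$ remain invertible over $\mathcal U(G)$, so rank invariance gives that $\rk_{\mathcal N(G)}(A)$ equals the number of $1$'s in $PAQ$. This is an integer, which is the Strong Atiyah Conjecture for torsion-free $G$.

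For the direction ($\Rightarrow$), suppose all ranks of matrices over $\K[G]$ are integers. It suffices to show that every nonzero $x \in D$ has $\rk(x) = 1$. Then $x$ is a unit in $\mathcal U(G)$, so $x\inv \in D$ by definition of the division closure, and $D$ is a division ring. Since $D$ is contained in the rational closure of $\K[G]$ in $\mathcal U(G)$, I will prove the claim for every element $x$ of the rational closure. For such an $x$ there is a square matrix $B$ over $\K[G]$, invertible over $\mathcal U(G)$, and vectors $u, v$ over $\K[G]$ with $x = u B\inv v$ (Cramer's rule and the standard description of the rational closure). Form the block matrix
\[
    M = \begin{pmatrix} B & v \\ u & 0 \end{pmatrix}
\]
over $\K[G]$. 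Elementary block operations with $B\inv$, which are invertible matrices over $\mathcal U(G)$, transform $M$ into $\mathrm{diag}(B, -uB\inv v)$. Hence $\rk(M) = n + \rk(x)$, where $n$ is the size of $B$. By hypothesis $\rk(M) \in \Z$, so $\rk(x) \in \Z \cap [0,1]$. Thus either $\rk(x) = 0$, in which case $x = 0$ by faithfulness of the trace, or $\rk(x) = 1$.

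The main obstacle is the analytic input: establishing that $\mathcal U(G)$ carries a well-defined rank function compatible with $\rk_{\mathcal N(G)}$ and invariant under invertible matrices over $\mathcal U(G)$, together with the criterion that $\rk(x) = 1$ implies $x$ is invertible. I would take these from the theory of finite von Neumann algebras and $*$-regular rings (Berberian, L\"uck) rather than reprove them. The algebraic step that also needs care is the identification of rational-closure elements as Schur complements $uB\inv v$.
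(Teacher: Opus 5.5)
The paper does not prove this statement. It is quoted from \cite{LinnellDivRings93} and used as a black box, so there is no internal proof to compare with.

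Your argument is correct and is the standard self-contained proof.
\begin{itemize}
\item For ($\Leftarrow$): Gaussian elimination over $\mathcal D(\K[G]\subseteq\mathcal U(G))$, together with invariance of the $\mathcal U(G)$-rank under invertible matrices, gives integrality of $\rk_{\mathcal N(G)}$ on $\Mat(\K[G])$.
\item For ($\Rightarrow$): the Schur complement identity $\rk\begin{pmatrix} B & v \\ u & 0\end{pmatrix} = n + \rk(uB\inv v)$ forces every element of the rational closure to have rank $0$ or $1$. Faithfulness of the trace and direct finiteness of $\mathcal U(G)$ then make each such element either $0$ or a unit of $\mathcal U(G)$.
\end{itemize}
The analytic inputs you invoke (regularity of $\mathcal U(G)$ and a Sylvester rank function on it extending $\rk_{\mathcal N(G)}$) are indeed standard, see \cite{Luck02}.

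Two minor points. First, no Cramer's rule is needed. By definition, the rational closure consists of the entries $e_i^{T}B\inv e_j$ of inverses of square matrices $B$ over $\K[G]$ that are invertible over $\mathcal U(G)$, so you may take $u,v$ to be standard basis vectors. Second, the one algebraic fact you should cite explicitly is Cohn's theorem: the rational closure is a subring of $\mathcal U(G)$ that is closed under inverting those of its elements that are units in $\mathcal U(G)$, and hence it contains the division closure. Your block matrix is in fact the heart of that theorem. The factorisation $M = \begin{pmatrix} I & 0\\ uB\inv & 1\end{pmatrix}\begin{pmatrix} B & 0\\ 0 & -x\end{pmatrix}\begin{pmatrix} I & B\inv v\\ 0 & 1\end{pmatrix}$, with $x = uB\inv v$, shows two things. $M$ is invertible over $\mathcal U(G)$ exactly when $x$ is, and in that case the bottom-right entry of $M\inv$ is $-x\inv$.
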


In particular, if $G$ is torsion-free and satisfies the Strong Atiyah Conjecture over $\K \subseteq \C$, then $\K[G]$ satisfies the Zero Divisor Conjecture (and if $\K = \C$, then $\mathbb L[G]$ satisfies the Zero Divisor Conjecture for any field $\mathbb L$ of characteristic zero).

While it is not known whether the Atiyah Conjecture passes to subgroups in general, this is the case for subgroups of torsion-free groups.

\begin{lem}[{\cite[Lemma 10.4]{Luck02}}]\label{lem:SAC_subgroups}
    Let $G$ be a torsion-free group satisfying the Strong Atiyah Conjecture over $\K \subseteq \C$. If $H \leqslant G$, then $H$ satisfies the Strong Atiyah Conjecture over $\K$. 
\end{lem}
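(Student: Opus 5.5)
The plan is to go through Linnell's criterion, \cref{thm:Linnell}, in both directions. We may assume $\K = \C$ or a general subfield; the argument is the same. Suppose $G$ is torsion-free and $\mathcal D_G := \mathcal D(\K[G] \subseteq \mathcal U(G))$ is a division ring. For $H \leqslant G$ we want to show that $\mathcal D_H := \mathcal D(\K[H] \subseteq \mathcal U(H))$ is a division ring. Then \cref{thm:Linnell} gives the Strong Atiyah Conjecture for $H$, since $H$ is torsion-free and so $\mathrm{lcm}(H)=1$.

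The first step is to relate $\mathcal U(H)$ to $\mathcal U(G)$. Choose a transversal $T$ for the cosets $Hg$, so that $\ell^2(G) = \bigoplus_{t \in T} \ell^2(H)t$ orthogonally. An operator in $\mathcal N(H)$, acting by right multiplication on each summand, then extends to an operator in $\mathcal N(G)$. This gives a trace-preserving embedding $\mathcal N(H) \hookrightarrow \mathcal N(G)$ which restricts to the identity on $\C[H]$. It sends non-zero-divisors to non-zero-divisors, because an element of $\mathcal N(H)$ is a non-zero-divisor if and only if it is injective with dense image, and this property is preserved under the block-diagonal extension. Hence the embedding extends to an embedding $\mathcal U(H) \hookrightarrow \mathcal U(G)$ by the universal property of Ore localisation. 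We also need the converse: an element of $\mathcal U(H)$ that becomes invertible in $\mathcal U(G)$ is already invertible in $\mathcal U(H)$. Writing it as $s^{-1}a$ with $a, s \in \mathcal N(H)$, it suffices to check that $a$ is a non-zero-divisor in $\mathcal N(H)$. Its extension is a non-zero-divisor in $\mathcal N(G)$, so it has trivial kernel and dense image on $\ell^2(G)$. Restricting to the $H$-invariant summand $\ell^2(H)$ then gives the same properties for $a$. I expect this compatibility of units to be the main technical point, though it is standard (L\"uck, Chapter 8).

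Given this, the conclusion follows by a minimality argument. Regard $\mathcal D_H$ as a subring of $\mathcal U(G)$, and let $\mathcal E$ be the division closure of $\K[H]$ in $\mathcal U(G)$. Because units of $\mathcal U(H)$ are exactly the elements that become units in $\mathcal U(G)$, the division closure can be computed in either ring, so $\mathcal E = \mathcal D_H$. Now consider $\{x \in \mathcal D_G : x \in \mathcal E\}$; more directly, look at the intersection of $\mathcal D_G$ with the image of $\mathcal U(H)$. This intersection is a subring containing $\K[H]$ and it is closed under inverses in $\mathcal U(G)$. Closure under inverses holds because $\mathcal D_G$ is a division ring, and because an inverse of an element of $\mathcal U(H)$ lies in $\mathcal U(H)$ by the unit compatibility above. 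So $\mathcal D_H$ is contained in $\mathcal D_G$.

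Finally, every nonzero $x \in \mathcal D_H$ is nonzero in the division ring $\mathcal D_G$, hence is a unit of $\mathcal U(G)$. By unit compatibility it is a unit of $\mathcal U(H)$, and since $\mathcal D_H$ is division closed, $x^{-1} \in \mathcal D_H$. Therefore $\mathcal D_H$ is a division ring, and \cref{thm:Linnell} completes the proof.
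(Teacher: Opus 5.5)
The paper gives no argument of its own here, only the citation to L\"uck's Lemma 10.4. The argument behind that citation is direct: for a matrix $A$ over $\K[H]$, induction from $H$ to $G$ preserves von Neumann dimension, so $\rk_{\mathcal N(H)}(A)=\rk_{\mathcal N(G)}(A)\in\Z$. Torsion-freeness is used only to ensure $\mathrm{lcm}(H)=\mathrm{lcm}(G)=1$.

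Your proof is correct, modulo the slip below, but it takes a different and much heavier route. You use Linnell's criterion (\cref{thm:Linnell}) in both directions, and you show that the embedding $\mathcal U(H)\hookrightarrow\mathcal U(G)$ reflects units. This identifies $\mathcal D(\K[H]\subseteq\mathcal U(H))$ with the division subring of $\mathcal D(\K[G]\subseteq\mathcal U(G))$ generated by $\K[H]$. That identification is a nice structural by-product, but the lemma does not need it. In fact, the trace-preserving embedding $\mathcal N(H)\hookrightarrow\mathcal N(G)$ you build already finishes the job. For $A$ over $\K[H]$, the map $\varphi_A$ on $\ell^2(G)^n$ is block diagonal, with blocks unitarily equivalent to $\varphi_A$ on $\ell^2(H)^n$. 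So the projection onto the closure of its image is the block-diagonal extension of the one for $H$, and the traces agree. The direct route also shows that the Atiyah Conjecture of any order $\Lambda$ passes to arbitrary subgroups, whereas yours is tied to torsion-free groups through Linnell's theorem.

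The slip concerns the decomposition of $\ell^2(G)$. With the paper's conventions (left $G$-action on $\ell^2(G)$, $\mathcal N(G)$ acting by right multiplication), the decomposition $\ell^2(G)=\bigoplus_t \ell^2(H)t$ over right cosets $Ht$ is not preserved by right multiplication by elements of $H$. Indeed $\ell^2(H)t$ is sent into $\ell^2(Hth)$, and $Hth\neq Ht$ unless $tht^{-1}\in H$. Moreover, extending an operator block-diagonally along this decomposition does not commute with the left $G$-action. Decompose along left cosets instead: $\ell^2(G)=\bigoplus_s s\,\ell^2(H)$, with $s$ running over a transversal of $G/H$. Then right multiplication by $\xi$, the image of $1$ under $a\in\mathcal N(H)$, preserves each $s\,\ell^2(H)$ and is unitarily equivalent to $a$ there. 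After this change your injectivity and dense-image arguments go through verbatim.

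A smaller point: $\mathcal D_H\subseteq\mathcal D_G$ follows simply from the minimality of $\mathcal E$, because $\mathcal D_G$ is a division-closed subring of $\mathcal U(G)$ containing $\K[H]$. The fact that $\mathcal D_G$ is a division ring is not what that step uses.
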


Our proof of the Weak Atiyah Conjecture for various outer automorphism groups goes via proving the Strong Atiyah Conjecture for a finite-index subgroup. This relies on the following result.

\begin{lem}\label{lem:weak_atiyah_commensurability}
    If $G$ has a finite-index subgroup $H$ satisfying the Weak Atiyah Conjecture over a field $K \subseteq \C$, then $G$ satisfies the Weak Atiyah Conjecture over $K$.
\end{lem}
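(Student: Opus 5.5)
The plan is to use the induction (restriction) formula for von Neumann ranks along a finite-index subgroup: for $H \leqslant G$ of finite index $d = [G:H]$ and any matrix $A$ over $K[G]$,
\[
    \rk_{\mathcal N(G)}(A) = \tfrac{1}{d}\,\rk_{\mathcal N(H)}(\operatorname{res}^G_H A).
\]
Here $\operatorname{res}^G_H A$ is the matrix over $K[H]$ obtained by viewing $K[G]$ as a free left $K[H]$-module of rank $d$. Granting this formula, the statement follows immediately. By hypothesis there is an $n$ with $\rk_{\mathcal N(H)}(B) \in \frac1n\Z$ for every matrix $B$ over $K[H]$. Hence $\rk_{\mathcal N(G)}(A) \in \frac{1}{nd}\Z$ for every matrix $A$ over $K[G]$. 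We also need $G$ to have a bound on the orders of its finite subgroups, and this holds: a finite subgroup $F \leqslant G$ meets $H$ in a finite subgroup of order at most some bound $N$, and $[F : F\cap H] \leqslant d$, so $|F| \leqslant Nd$.

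To justify the formula, I would fix right coset representatives $g_1,\dots,g_d$, so that $G = \bigsqcup H g_i$. This gives a decomposition $\ell^2(G) = \bigoplus_i \ell^2(H) g_i \cong \ell^2(H)^d$ as left $H$-Hilbert modules. The decomposition is compatible with right multiplication by elements of $K[G]$, which act on $\ell^2(H)^d$ through $d\times d$ matrices over $K[H]$. Right multiplication by an $m\times k$ matrix $A$ over $K[G]$ therefore becomes right multiplication by the $md \times kd$ matrix $\operatorname{res}^G_H A$ over $K[H]$. The closure of the image is the same closed subspace $V$ in either description, so it remains to compare $\dim_{\mathcal N(G)}V$ with $\dim_{\mathcal N(H)}V$.

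The projection $\pi_V$ commutes with the left $G$-action. Under the identification, its $\mathcal N(H)$-trace is
\[
    \sum_{j}\sum_{i=1}^d \langle g_i 1_j, \pi_V(g_i 1_j)\rangle,
\]
where $g_i 1_j \in \ell^2(G)^k$ has $g_i$ in the $j$th component. By $G$-equivariance and unitarity of the left action, each inner product $\langle g_i 1_j, \pi_V(g_i 1_j)\rangle$ equals $\langle 1_j, \pi_V(1_j)\rangle$. Summing over $i$ therefore gives $d \cdot \dim_{\mathcal N(G)}V$.

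The only point needing care is this bookkeeping of left versus right actions and coset representatives. It is standard (cf.\ L\"uck \cite{Luck02}), and the paper already uses the resulting formula in its introduction, so I expect no genuine obstacle.
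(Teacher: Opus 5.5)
Your proposal is correct and follows the paper's proof, which likewise just invokes the finite-index formula $\rk_{\mathcal N(G)} = \frac{1}{[G:H]}\rk_{\mathcal N(H)}$ (L\"uck, Theorem 6.54(6)) to get $\rk_{\mathcal N(G)}(A) \in \frac{1}{n[G:H]}\Z$. Your coset-decomposition trace computation is a valid self-contained proof of that formula, and your check that $G$ inherits a bound on the orders of its finite subgroups is a harmless extra detail the paper leaves implicit.
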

\begin{proof}
    This follows immediately from the fact that $\rk_{\mathcal N(G)} = \frac{1}{[G:H]} \rk_{\mathcal N(H)}$ \cite[Theorem 6.54 (6)]{Luck02}.
\end{proof}

One of the main results we will use is Jaikin-Zapirain's sofic approximation theorem for von Neumann rank functions. The definition of soficity will not concern us here, as it will suffice to know that residually finite groups are sofic and the property passes to subgroups. The most general version of the following theorem is for convergence in the space of marked groups, but we will only need it in the form stated below. We say that a chain of subgroup $G = G_1 \geqslant G_2 \geqslant \dots$ is a \emph{residual normal chain} if $\bigcap_{i \in \N} G_i = \{1\}$ and $G_i$ is normal in $G$ for all $i \in \N$.

\begin{thm}[{\cite{JaikinZapirain_basechange}}] \label{thm:sofic_approx}
    Let $G$ be a finitely generated group and let 
    \[
        G = G_1 \geqslant G_2 \geqslant \dots
    \]
    be a residual normal chain such that every quotient $G/G_k$ is sofic. If $A$ is a finite matrix $A$ over $\C[G]$, then
    \[
        \rk_{\mathcal N(G)} (A) = \lim_{k \to \infty} \rk_{\mathcal N(G/G_k)}(A_k),
    \]
    where $A_k$ is the matrix over $\C[G/G_k]$ obtained from $A$ by applying the quotient map $\C[G] \to \C[G/G_k]$ to all its entries.
\end{thm}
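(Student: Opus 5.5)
The plan is the classical Lück approximation strategy: convergence of spectral measures, then a uniform bound on the mass near $0$. First I reduce to a self-adjoint positive operator. Replacing $A$ by $AA^*$ does not change the von Neumann rank, and the quotient maps commute with $*$. So I may assume $A$ is an $n\times n$ matrix over $\C[G]$ with $A=A^*$ and $A$ acting as a positive operator.

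Let $\mu$ and $\mu_k$ be the spectral measures of $A$ and $A_k$ with respect to the normalised traces $\tau_G$ and $\tau_{G/G_k}$. These are measures on a fixed interval $[0,C]$, since the operator norms are bounded by the $\ell^1$-norm of the coefficients. Then
\[
\rk_{\mathcal N(G)}(A)=n-n\,\mu(\{0\}), \qquad \rk_{\mathcal N(G/G_k)}(A_k)=n-n\,\mu_k(\{0\}).
\]

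The moment $\int x^m\,d\mu$ equals $\frac1n\tau_G(A^m)$. This is a finite sum of coefficients indexed by products of at most $m$ elements of the finite set $S=\operatorname{supp}(A)$. Because the chain is residual and normal, for each $m$ there is $k_0$ such that for $k\ge k_0$ a word of length at most $m$ in $S$ is trivial in $G$ if and only if its image is trivial in $G/G_k$. So $\tau_G(A^m)=\tau_{G/G_k}(A_k^m)$ for $k\ge k_0$. All moments therefore eventually agree, and since the supports lie in a fixed compact interval, $\mu_k\to\mu$ weakly. Using the closed set $\{0\}$ (portmanteau), this gives $\limsup_k\mu_k(\{0\})\le\mu(\{0\})$, hence $\liminf_k \rk(A_k)\ge \rk(A)$.

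For the reverse inequality I need to rule out mass of $\mu_k$ accumulating on $(0,\varepsilon)$ and then escaping to $0$ in the limit. Equivalently, I need a bound $\mu_k((0,\varepsilon))\le f(\varepsilon)$ uniform in $k$, with $f(\varepsilon)\to0$ as $\varepsilon\to0$.

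\textbf{Coefficients in $\overline{\Q}$.} Here the standard tool is a determinant bound. If the coefficients are algebraic integers, then for finite quotients the Fuglede--Kadison determinant of $A_k$ restricted to $\ker(A_k)^\perp$ is at least $1$. This comes from the product of nonzero eigenvalues being a nonzero algebraic integer, after also controlling the Galois conjugates by the $\ell^1$-norm. That yields $\mu_k((0,\varepsilon))\le \frac{\log C}{|\log\varepsilon|}$. For sofic quotients I would run the same argument through a sofic approximation of each $G/G_k$ by finite permutation models, as Elek--Szabó do, obtaining the same uniform logarithmic bound.

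\textbf{Coefficients in $\C$.} This base-change step is the main obstacle, since for transcendental coefficients there is no determinant bound at all. I would first try Jaikin-Zapirain's strategy. The entries of $A$ lie in a finitely generated subring $R\subset\C$, and ring homomorphisms $R\to\overline{\Q}$ are plentiful. I would show that the rank function $\lim_k \rk_{\mathcal N(G/G_k)}$ (after passing to a subsequence, it exists as a Sylvester matrix rank function on $\C[G]$) and $\rk_{\mathcal N(G)}$ both restrict to $\overline{\Q}[G]$ compatibly, where they agree by the previous step. The remaining task is to transfer this agreement back to $\C[G]$ using the universal property and extremality of $\rk_{\mathcal N(G)}$ among Sylvester rank functions. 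Concretely, I would try to show that any limit rank function is at least $\rk_{\mathcal N(G)}$; this is the semicontinuity already proved. I would then obtain the opposite inequality via a specialisation argument: the rank of a matrix over $R[G]$ can only drop under a generic specialisation to $\overline{\Q}$, uniformly in $k$.
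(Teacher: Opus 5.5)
The paper does not prove this statement. It is quoted from Jaikin-Zapirain, and what that citation supplies beyond the classical determinant argument is exactly the passage from $\overline{\Q}$ to $\C$ coefficients, which \cref{cor:approx_Atiyah} needs.

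Your first two steps are the standard argument and they are correct:
\begin{enumerate}
\item the reduction to $AA^*$, the eventual equality of moments, weak convergence, and the portmanteau inequality $\liminf_k \rk_{\mathcal N(G/G_k)}(A_k)\geqslant \rk_{\mathcal N(G)}(A)$;
\item for coefficients in a number field $L$, a bound on $\mu_k((0,\varepsilon))$ uniform in $k$, obtained via Galois conjugates and sofic models. The constant should carry a factor $[L:\Q]$, e.g.\ $[L:\Q]\log C/|\log\varepsilon|$, but this is harmless.
\end{enumerate}
So you do obtain the theorem over $\overline{\Q}[G]$.

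The step to $\C$ has a genuine gap. Write $\rk_k=\rk_{\mathcal N(G/G_k)}$. A specialisation argument must produce, for each $\delta>0$, some $\phi\colon R\to\overline{\Q}$ with
\[
\limsup_k \rk_k(A_k)\leqslant \lim_k \rk_k(\phi(A_k))+\delta=\rk_{\mathcal N(G)}(\phi(A))+\delta\leqslant \rk_{\mathcal N(G)}(A)+\delta .
\]
The slogan ``rank can only drop under specialisation'' gives $\rk_k(\phi(A_k))\leqslant \rk_k(A_k)$ at finite levels. That is the \emph{opposite} of the first inequality, which instead needs near-genericity of $\phi$ uniformly in $k$. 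A $\phi$ avoiding countably many bad Zariski-closed sets need not exist among countably many $\overline{\Q}$-points. For finite quotients this part can be repaired quantitatively: with one transcendental parameter $t$ of degree $d$, a Smith normal form argument bounds the total defect $\sum_{\alpha}\big(\rk_k(A_k(t))-\rk_k(A_k(\alpha))\big)$ by $dn$, uniformly in $k$.

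The real obstruction is the last inequality: that $\rk_{\mathcal N(G)}$ itself cannot increase under specialisation. For the von Neumann rank there is no minors argument, and you supply no substitute. Granting the theorem, this monotonicity follows at once from the finite levels, so using it here is circular. The appeal to ``extremality'' has the same problem. The easy inequality plus agreement on $\overline{\Q}[G]$ only propagates to $\C[G]$ if you control how $\rk_{\mathcal N(G)}$ behaves under extension of scalars, and that is exactly what has to be proved.

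A test case shows what is missing. Let all $G/G_k$ be finite, let $T=T^*$ lie in $\overline{\Q}[G]$, and let $t\in\R$ be transcendental.
\begin{itemize}
\item Every eigenvalue of $T_k$ is algebraic, so $t-T_k$ is invertible. The theorem therefore asserts $\rk_{\mathcal N(G)}(t-T)=1$, i.e.\ $\mu_T(\{t\})=0$ for the spectral measure of $T$: $T$ has no transcendental eigenvalues.
\item By weak convergence this is equivalent to $\lim_{\varepsilon\to0}\limsup_k\mu_{T_k}((t-\varepsilon,t+\varepsilon))=0$.
\item Your determinant bound only controls neighbourhoods of \emph{algebraic} points, with constants depending on their degree and height. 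Nothing in your argument stops algebraic eigenvalues of $T_k$ from accumulating at $t$.
\item The needed inequality $\rk_{\mathcal N(G)}(t-T)\geqslant\rk_{\mathcal N(G)}(\alpha-T)$, for any $\alpha\in\overline{\Q}$ with $\mu_T(\{\alpha\})=0$, is exactly your unproved monotonicity.
\end{itemize}
Closing this requires a genuinely new input. In the cited paper it is an analysis, via Sylvester matrix rank functions, of how $\rk_{\mathcal N(G)}$ behaves under base change from $\overline{\Q}$ to $\C$. Without it, your proposal proves the statement only for matrices over $\overline{\Q}[G]$.
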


Recall that, for a not necessarily countable group $G$ with a bound on the order of its finite subgroups, we will say that $G$ satisfies the Strong Atiyah Conjecture over $\C$ if all of its finitely generated subgroups satisfy it.

\begin{cor}\label{cor:approx_Atiyah}
    Let $G$ be a group and let $G = G_1 \geqslant G_2 \geqslant \dots$ be a residual normal chain such that every quotient $G/G_k$ is torsion-free, sofic, and satisfies the Strong Atiyah Conjecture over $\C$. Then $G$ satisfies the Strong Atiyah Conjecture over $\C$.
\end{cor}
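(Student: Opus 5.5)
By the convention for uncountable groups, it suffices to treat a finitely generated subgroup $H \leqslant G$. Put $H_k := H \cap G_k$. Then $H = H_1 \geqslant H_2 \geqslant \dots$ is a residual normal chain in $H$, since $\bigcap_k H_k \subseteq \bigcap_k G_k = \{1\}$. Each quotient $H/H_k \cong H G_k / G_k$ is a subgroup of $G/G_k$, so it is torsion-free and sofic, because soficity passes to subgroups. It is also finitely generated, being a quotient of $H$. Because $G/G_k$ is torsion-free and satisfies the Strong Atiyah Conjecture over $\C$, \cref{lem:SAC_subgroups} gives the same conjecture for $H/H_k$. When $G/G_k$ is uncountable, the conjecture for it holds by definition on finitely generated subgroups, and $H/H_k$ is one of them, so we can apply it directly. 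Finally, $H$ is torsion-free: a nontrivial element of finite order would survive in some $H/H_k$, since the chain is residual, and would have finite order there, which is impossible.

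So $H$ is torsion-free, and $\mathrm{lcm}(H) = \mathrm{lcm}(H/H_k) = 1$. Fix a finite matrix $A$ over $\C[H]$. \cref{thm:sofic_approx} applied to the chain $(H_k)$ gives
\[
\rk_{\mathcal N(H)}(A) = \lim_{k\to\infty} \rk_{\mathcal N(H/H_k)}(A_k).
\]
By the Strong Atiyah Conjecture for $H/H_k$, each term on the right lies in $\Z$. A convergent sequence of integers is eventually constant, so its limit lies in $\Z = \frac{1}{\mathrm{lcm}(H)}\Z$. This is exactly the Strong Atiyah Conjecture over $\C$ for $H$, and hence for $G$.

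The only point needing care is the first paragraph: the passage from $G$ to $H$, making sure the chain restricted to $H$ keeps all the hypotheses required by \cref{thm:sofic_approx}, namely normality, residuality and soficity of the quotients, together with the Atiyah property of the quotients via \cref{lem:SAC_subgroups}. Once that is in place, the limit argument is immediate.
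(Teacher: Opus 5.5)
Your proof is correct and follows the same route as the paper: you restrict the chain to a finitely generated subgroup $H$ via $H_k = H \cap G_k$, get the Strong Atiyah Conjecture for $H/H_k$ from \cref{lem:SAC_subgroups}, and conclude with \cref{thm:sofic_approx} that each rank is a limit of integers. Your extra checks make explicit two points the paper leaves implicit: that $H$ is torsion-free, so $\mathrm{lcm}(H)=1$, and that when $G/G_k$ is uncountable the finitely generated group $H/H_k$ satisfies the conjecture directly by the convention for uncountable groups.
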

\begin{proof}
    If $H \leqslant G$ is a subgroup, then the subgroups $H_n := H \cap G_n$ define a residual normal chain in $H$ such that $H/H_n$ is torsion-free, sofic, and moreover $H/H_n$ satisfies the Strong Atiyah Conjecture over $\C$ by \cref{lem:SAC_subgroups}. We may therefore assume that $G$ is finitely generated. For every finite matrix $M$ over $\C[G]$, we have that $\rk_{\mathcal N(G)}(M)$ is a limit of integers by \cref{thm:sofic_approx}. Hence, $\rk_{\mathcal N(G)}(M) \in \Z$, and therefore $G$ satisfies the Strong Atiyah Conjecture over $\C$. \qedhere
\end{proof}

We will also use the following fact about division ring embeddability, which is of a similar nature, but has a much easier proof. We only briefly sketch its proof, since it is well known.

\begin{lem}\label{lem:approx_div_rings}
    Let $R$ be a ring and let $G$ be a group with a residual normal chain $(G_n)_{n\in\N}$. If $R[G/G_n]$ embeds into a division ring for all $n \in \N$, then $R[G]$ embeds into a division ring.
\end{lem}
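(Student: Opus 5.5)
Let $D_n$ be a division ring containing $R[G/G_n]$ and let $\pi_n \colon R[G] \to R[G/G_n] \hookrightarrow D_n$ be the composite map. The plan is to combine the $\pi_n$ into a single map to an ultraproduct of the $D_n$. Fix a non-principal ultrafilter $\omega$ on $\N$ and set $D := \prod_{n} D_n / \omega$, the quotient of the product ring by the ideal of sequences that vanish on a set in $\omega$. This $D$ is a division ring. Indeed, if $(x_n)$ represents a nonzero element, then $\{n : x_n \neq 0\} \in \omega$. We invert $x_n$ on that set and put $0$ elsewhere, and the resulting sequence represents a two-sided inverse of $(x_n)$. Now define $\Phi \colon R[G] \to D$ by $\Phi(x) = [(\pi_n(x))_n]_\omega$. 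This is a unital ring homomorphism, since each $\pi_n$ is one.

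It remains to show that $\Phi$ is injective, and this is where the residual hypothesis enters. Let $x = \sum_{g \in S} r_g g$ be a nonzero element of $R[G]$, where $S$ is its finite support. For each pair $g \neq h$ in $S$, the element $g^{-1}h$ is nontrivial, so it lies outside $G_n$ for all $n$ large enough. This uses that the $G_n$ form a decreasing chain with trivial intersection. Since there are only finitely many such pairs, there is some $N$ such that for $n \geqslant N$ the elements of $S$ have pairwise distinct images in $G/G_n$. For those $n$, the coefficients of $x$ are not merged under the quotient map, so the image of $x$ in $R[G/G_n]$ is nonzero. Because $R[G/G_n] \hookrightarrow D_n$ is injective, $\pi_n(x) \neq 0$ for all $n \geqslant N$. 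A cofinite set belongs to the non-principal ultrafilter $\omega$, so $\Phi(x) \neq 0$.

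The only point needing care is that the chain is nested. Normality of $G_n$ is used only so that the quotient $G/G_n$ is a group, and the chain condition is what turns "eventually nonzero" into a statement about every large $n$. If the intended meaning of "residual normal chain" did not include nesting, we would replace $G_n$ by the finite intersections $G_1 \cap \dots \cap G_n$. That replacement would, however, change the quotients, so the nested reading stated in the paper is the one we rely on. The remaining details are routine: verifying that $D$ is a division ring and that $\Phi$ is a ring homomorphism.
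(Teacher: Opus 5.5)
Your proof is correct and follows the same approach as the paper: map $R[G]$ into the ultraproduct $\prod_\omega D_n$ over a non-principal ultrafilter and use that this ultraproduct is a division ring. Your finite-support argument for injectivity spells out the step the paper's sketch leaves implicit, and you correctly read ``residual normal chain'' as a nested chain, which is the paper's definition.
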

\begin{proof}[Proof (sketch)]
    Let $\omega$ be a non-principal ultrafilter on $\N$. Suppose that $R[G/G_k]$ embeds into the division ring $\mathcal D_k$ for each $k$. There are ring homomorphisms
    \[
        R[G] \to R[G/G_k] \to \mathcal D_k
    \]
    for each $k$, and together they induce an embedding $R[G] \hookrightarrow \prod_\omega \mathcal D_k$. But the ultraproduct of division rings over a non-principal ultrafilter is again a division ring. \qedhere
\end{proof}

Finally, to obtain division ring embeddings for arbitrary fields of characteristic zero, we will use the following result. The proof is the same as that of \cite[Corollary 6.7]{JaikinLopezStrongAtiyah2020}.

\begin{lem}\label{lem:sofic_SAC_all_fields}
    If $G$ is a torsion-free sofic group satisfying the Strong Atiyah Conjecture over $\C$, then $\K[G]$ embeds into a division ring for all fields of characteristic zero. 
\end{lem}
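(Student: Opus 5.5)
The plan follows the proof of \cite[Corollary 6.7]{JaikinLopezStrongAtiyah2020}. First I will treat the case $\K \subseteq \C$, then reduce an arbitrary field of characteristic zero to this case.

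For $\K \subseteq \C$, \cref{thm:Linnell} shows that $\mathcal D(\C[G] \subseteq \mathcal U(G))$ is a division ring, since $G$ is torsion-free and satisfies the Strong Atiyah Conjecture over $\C$. This ring contains $\K[G]$, which settles the case. In particular, $\overline{\Q}[G]$ embeds into a division ring. The same holds for $L[G]$ whenever $L$ is a field of characteristic zero of cardinality at most the continuum, because any such $L$ embeds into $\C$. Soficity has not been used yet; it is needed for larger fields.

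For an arbitrary field $\K$ of characteristic zero, I want to avoid choosing an embedding into $\C$. The tool is Jaikin-Zapirain's theory of Sylvester rank functions on $\K[G]$ for sofic $G$ \cite{JaikinZapirain_basechange}. Via sofic approximation, he constructs a rank function $\rk_{\K[G]}$ that is natural in $\K$ and agrees with the von Neumann rank when $\K \subseteq \C$; this base-change statement is where soficity is essential. The Strong Atiyah Conjecture over $\C$, together with the finitely generated subfield argument below, gives that this rank function is integer-valued on matrices over $\K[G]$. Jaikin-Zapirain also shows that an integer-valued Sylvester matrix rank function on $\K[G]$ which is the restriction of a regular one (here, an ultraproduct of matrix-algebra ranks) is induced by an epic division ring $\K[G] \to \mathcal D$. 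Since $\rk(a) = 1$ for every non-zero $a \in \K[G]$, the resulting map $\K[G] \to \mathcal D$ is injective, so $\K[G]$ embeds into a division ring.

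A more elementary alternative runs through finitely generated subfields. Every matrix over $\K[G]$ has entries in $L[G]$ for some finitely generated subfield $L \subseteq \K$. Such an $L$ embeds into $\C$, so $L[G]$ embeds into a division ring $\mathcal D(L[G] \subseteq \mathcal U(G))$. The main obstacle is gluing these division rings compatibly. An ultraproduct over the directed system of finitely generated subfields of $\K$ gives a map from $\K[G]$ into an ultraproduct of division rings, and this ultraproduct is itself a division ring. Injectivity holds because each non-zero element of $\K[G]$ already lies in $L[G]$ for all sufficiently large $L$, where it maps to a non-zero element. This is essentially the argument of \cref{lem:approx_div_rings}, applied to a directed union instead of a residual chain.

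Because this ultraproduct route needs only the embeddings for finitely generated subfields, I expect it to be the one I actually write out, with soficity used only to identify the rank functions if needed. The hardest step is ensuring the ultraproduct is taken over a suitable ultrafilter on the directed set of finitely generated subfields, one containing all the cones $\{L' : L' \supseteq L\}$.
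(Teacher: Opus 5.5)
Your proposal is correct. The route you intend to write out (your fourth paragraph) is genuinely different from the paper's.

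\textbf{The paper's route.} The paper builds one Sylvester matrix rank function on $\K[G]$. For a matrix $A$ it picks a finitely generated subfield $\K_0$ containing the coefficients and an embedding $\varphi\colon \K_0\hookrightarrow\C$, and sets $\rk(A)=\rk_{\mathcal N(G)}(A^\varphi)$. Soficity enters only through Jaikin-Zapirain's independence theorem, which shows this does not depend on $\varphi$. The Strong Atiyah Conjecture makes $\rk$ integer-valued, and Malcolmson's theorem realises it by a homomorphism $\K[G]\to\mathcal D$. This map is injective because non-zero elements have rank $1$.

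\textbf{Your route.} You embed each $L[G]$, for $L\subseteq\K$ finitely generated, into the division ring $\mathcal D(\C[G]\subseteq\mathcal U(G))$ of \cref{thm:Linnell} via an arbitrary embedding $L\hookrightarrow\C$. You then glue by an ultraproduct over an ultrafilter containing all cones $\{L' : L'\supseteq L\}$. Such an ultrafilter exists because the cones have the finite intersection property: the compositum of two finitely generated subfields is finitely generated. The map $x\mapsto[(\iota_L(x))_L]$, with arbitrary entries where $x\notin L[G]$, is a ring homomorphism, since any two elements lie in $L[G]$ for all $L$ in some cone. It is injective for the reason you give.

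No compatibility between the embeddings $\iota_L$ is required, so the gluing obstacle you anticipate does not arise. Your argument uses neither soficity nor Malcolmson's theorem. It shows that the soficity hypothesis is superfluous for the lemma as stated, because embeddability into a division ring is a local property of rings.

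\textbf{Trade-off.} What the paper's route buys is control over the embedding. The rank of a matrix $A$ over $\K[G]$ computed in the resulting (epic) division ring is exactly $\rk_{\mathcal N(G)}(A^\varphi)$, independently of all choices. Your ultraproduct, by contrast, is enormous and non-canonical.

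\textbf{On your third paragraph.} It is closer to the paper, with two corrections. There the rank is not constructed by sofic approximation over $\K$; it is pulled back from $\rk_{\mathcal N(G)}$ through embeddings of finitely generated subfields. Also, Malcolmson's theorem applies to any integer-valued Sylvester rank function, with no regularity hypothesis.
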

\begin{proof}
    Let $A$ be a finite matrix over $\K[G]$. Let $\K_0$ be a finitely generated subfield of $\K$ containing all the coefficients appearing in the entries of $A$ and fix an embedding $\varphi \colon \K_0 \hookrightarrow \C$. Let $A^\varphi$ be the matrix obtained from $A$ by applying $\varphi$ to all the coefficients appearing in its entries. Put
    \[
        \rk(A) := \rk_{\mathcal N(G)}(A^\varphi),
    \]
    and observe that $\rk(A)$ is independent of the choice of $\varphi$ by the fact that sofic groups satisfy the Independence Conjecture \cite[Corollary 1.7]{JaikinZapirain_basechange}. Since $G$ is torsion-free and satisfies the Strong Atiyah Conjecture over $\C$, we have that $\rk(A) \in \Z$ for all matrices $A$ over $\K[G]$. Thus $\K[G]$ admits an integer-valued Sylvester matrix rank function. A theorem of Malcolmson \cite[Theorem 1]{Malcolmson_SkewFields} then implies that there is a ring homomorphism $\psi\colon \K[G] \to \mathcal D$ where $\mathcal D$ is a division ring and $\rk(A)$ is the rank of the matrix obtained from $A$ by applying $\psi$ to all its entries. If $\psi(x) = 0$ for some $x \in \K[G]$, then $\rk(x) = 0$. But by the definition of $\rk$ implies this is only possible if $x = 0$. Hence, $\psi$ is injective. \qedhere
\end{proof}

\subsection{\texorpdfstring{$p$}{p}-adic analytic groups}

Let $p$ be a prime. A $p$-adic analytic group is a non-Archimedean analogue of a Lie group. Roughly, these are topological groups that admit a system of charts over $\Z_p^n$ for some $n \in \N$ such that the transition maps between the charts are analytic in a suitable sense, and moreover such that the group operations of multiplication and inversion are analytic maps. We will not give the precise definition here since it is rather involved and we will not need it. Instead we refer the reader to the foundational work of Lazard \cite{Lazard1965}, where many properties analogous to those of Lie groups were established, or to the book \cite{DixonEtAl_analyticPropBook} for further details on the subject. We will take the following theorem as a definition of compact $p$-adic analytic groups.

\begin{thm}[{\cite[Theorem 7.9 and Corollary 8.33]{DixonEtAl_analyticPropBook}}]
    A compact group $G$ is $p$-adic analytic if and only if it embeds as a closed subgroup of $\GL_n(\Z_p)$ for some $n \in \N$.
\end{thm}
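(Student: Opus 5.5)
The plan is to prove the two directions separately, using Lazard's structure theory for the forward direction and a closed-subgroup theorem for the converse.

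\emph{Converse direction.} First, $\GL_n(\Z_p)$ is a compact $p$-adic analytic group. The matrix entries give a global chart onto an open subset of $\Z_p^{n^2}$, and multiplication and inversion are polynomial, resp.\ rational with unit denominator, in these entries. Second, I would invoke the $p$-adic analogue of Cartan's closed subgroup theorem: a closed subgroup of a $p$-adic analytic group is again $p$-adic analytic. To prove this, pass to an open uniform subgroup $U$ of $\GL_n(\Z_p)$, for instance the congruence subgroup $I + p^{\varepsilon}\Mat_n(\Z_p)$. A closed subgroup $H$ then meets $U$ in a finitely generated pro-$p$ group of finite rank, since $\Mat_n(\Z_p)$ bounds the number of generators of its subgroups. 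Lazard's characterisation (pro-$p$ of finite rank implies analytic) then shows $H \cap U$ is analytic, and since $H \cap U$ is open in $H$, the group $H$ itself is analytic.

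\emph{Forward direction.} Let $G$ be compact $p$-adic analytic.
\begin{enumerate}
\item By Lazard, $G$ has an open normal uniform pro-$p$ subgroup $U$. By the theory of uniform groups, $U$ carries a $\Z_p$-Lie lattice $L_U$ via $x+y = \lim (x^{p^k}y^{p^k})^{p^{-k}}$ and a similar limit formula for the bracket. Then $\mathfrak g = L_U \otimes \Q_p$ is a finite-dimensional $\Q_p$-Lie algebra, and $\exp$, $\log$ and the Baker--Campbell--Hausdorff formula identify $U$ with $L_U$ on sufficiently small open pieces.
\item By Ado's theorem there is a faithful representation $\rho \colon \mathfrak g \to \mathfrak{gl}_m(\Q_p)$. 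Conjugating, we may assume $\rho(p^k L_U) \subseteq p^{\varepsilon}\Mat_m(\Z_p)$ for some $k$.
\item Let $V = U^{p^k}$, which is again uniform with Lie lattice $p^kL_U$. Define $\sigma(v) = \exp(\rho(\log v))$. Because $\rho$ is a Lie algebra map and both $\exp$ and $\log$ on each side satisfy the Baker--Campbell--Hausdorff formula (convergent in this range), $\sigma \colon V \to \GL_m(\Z_p)$ is a continuous homomorphism. It is injective because $\rho$ is injective and $\log$ is injective on $V$.
\item To extend from $V$ to $G$, induce $\sigma$ from the finite-index subgroup $V$ to $G$. This gives $G \to \GL_{m[G:V]}(\Z_p)$, whose kernel lies in $\ker\sigma = 1$, so the induced map is injective.
\item The image is closed because $G$ is compact and the map is continuous. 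The map is therefore a closed embedding.
\end{enumerate}

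\emph{Main obstacle.} I expect the hardest step to be step 3, namely checking that $\exp\circ\rho\circ\log$ is really a homomorphism on the nose. This needs uniform convergence estimates of the Baker--Campbell--Hausdorff series on $p^kL_U$ and on $p^{\varepsilon}\Mat_m(\Z_p)$. If this becomes delicate, I would fall back on a weaker argument. The kernel of the exponentiated representation is a closed normal subgroup of $V$ with trivial Lie algebra, hence finite. Since uniform groups are torsion-free, that kernel is trivial. This route only needs $\sigma$ to be a homomorphism on some smaller open subgroup, which can be achieved by increasing $k$.
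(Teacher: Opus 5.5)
Your proposal is correct. The paper gives no proof and only cites Dixon--du Sautoy--Mann--Segal, and your argument is essentially the standard Lazard/Dixon--du Sautoy--Mann--Segal argument behind those results: the converse via the criterion that a profinite group with an open pro-$p$ subgroup of finite rank is $p$-adic analytic, and the forward direction via Ado's theorem, $\exp\circ\rho\circ\log$ and the Campbell--Hausdorff formula on an open uniform subgroup, then induction to $G$.

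Two small repairs. The bound on generators of closed subgroups of $U = I + p^{\varepsilon}\Mat_n(\Z_p)$ comes from $U$ being uniform (powerful) of dimension $n^2$, so every closed subgroup needs at most $n^2$ generators; it does not come from the additive group $\Mat_n(\Z_p)$. Also, injectivity of $\sigma$ additionally uses that $\exp$ is injective on $p^{\varepsilon}\Mat_m(\Z_p)$.
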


Let $G$ be a topological group. We use the notation $U \trianglelefteqslant_o G$ to indicate that $U$ is an open normal subgroup of $G$. Given a ring $R$, the associated \emph{completed group algebra} of $G$ with coefficients in $R$ is
\[
    R\llbracket G \rrbracket = \varprojlim_{U \trianglelefteqslant_o G} R[G/U].
\]
In \cite[Proposition V.2.2.4]{Lazard1954}, Lazard shows that $\Z_p\llbracket G \rrbracket$ is Noetherian whenever $G$ is compact $p$-adic analytic. Moreover, $\Z_p\llbracket G \rrbracket$ is also a domain whenever $G$ is additionally torsion-free by a theorem of Neumann \cite[Theorem 1]{Neumann_completedGpAlgDomain}. Generalising these results, we have the following theorem of Farkas and Linnell.

\begin{thm}[{\cite[Theorem 6.1]{FarkasLinnell2006}}]\label{thm:p_adic_completed_domain}
    If $G$ is a torsion-free compact $p$-adic analytic group and $\K$ is a field of characteristic $p$, then the completed group algebra $\K\llbracket G\rrbracket$ is a Noetherian domain, and in particular embeds into a division ring.
\end{thm}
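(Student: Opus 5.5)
The strategy is the classical one for group rings of torsion-free polycyclic-by-finite groups: first prove that $\K\llbracket G\rrbracket$ is Noetherian and prime, then pass to its Goldie quotient ring, and finally use torsion-freeness of $G$ to rule out matrix rings over a division ring. The final assertion then follows at once. A Noetherian domain is a left and right Ore domain by Goldie's theorem, so its classical ring of quotients is a division ring containing it.

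\emph{Step 1: a uniform open subgroup.} By Lazard's theory \cite{Lazard1965,DixonEtAl_analyticPropBook}, $G$ has an open normal uniform pro-$p$ subgroup $U$, with topological generators $u_1,\dots,u_d$. Filter $\K\llbracket U\rrbracket$ by powers of its augmentation ideal $I$, which is also its maximal ideal since $U$ is pro-$p$ and $\mathrm{char}\,\K=p$. Lazard's computation of $\mathbb F_p\llbracket U\rrbracket$ extends by base change to $\K$. It shows that the associated graded ring $\bigoplus_n I^n/I^{n+1}$ is a polynomial ring $\K[x_1,\dots,x_d]$, where $x_i$ is the image of $u_i-1$. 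This uses that $U$ is uniform, so its restricted Lie algebra mod $p$ is abelian. The filtration is complete and separated, so two facts transfer from the graded ring to $\K\llbracket U\rrbracket$: it is Noetherian, because the graded ring is, and it is a domain, because the graded ring is. Hence $\K\llbracket U\rrbracket$ is a complete local Noetherian domain, and Goldie's theorem gives a division ring of quotients $D$.

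\emph{Step 2: the crossed product.} Since $U$ is open and normal, $\K\llbracket G\rrbracket \cong \K\llbracket U\rrbracket * (G/U)$ is a crossed product with the finite group $G/U$. It is therefore a finitely generated module over a Noetherian ring, hence Noetherian. Moreover, Ore localisation at the nonzero elements of $\K\llbracket U\rrbracket$ embeds $\K\llbracket G\rrbracket$ into the Artinian crossed product $S:=D*(G/U)$. The standard argument for crossed products of finite groups shows that $\K\llbracket G\rrbracket$ is semiprime. One then shows it is prime: a nontrivial central idempotent of $S$ would have to come from a finite normal subgroup of $G$ acting trivially modulo $U$, and there is none because $G$ is torsion-free. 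So $S\cong M_n(E)$ for a division ring $E$, and $S$ is the Goldie quotient ring of $\K\llbracket G\rrbracket$.

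\emph{Step 3: forcing $n=1$ (the main obstacle).} The plan is a Moody-type induction argument, as in the proof of Kaplansky's conjecture for torsion-free polycyclic-by-finite groups. One shows that the Grothendieck group $G_0(\K\llbracket G\rrbracket)$ is generated by modules induced from $\K\llbracket H\rrbracket$ for finite subgroups $H\leqslant G$; here the $p$-adic analytic structure replaces the polycyclic one, and one uses the Noetherian property of Step 2 throughout. Since $G$ is torsion-free, only $H=1$ occurs. Hence every finitely generated module, in particular any simple $S$-module restricted to a lattice over $\K\llbracket G\rrbracket$, has reduced rank in $S$ that is an integer multiple of the rank of $S$ itself. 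A simple $S$-module has rank $1/n$ of that of $S$, so $n=1$ and $S=E$ is a division ring.

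It follows that $\K\llbracket G\rrbracket$ is a domain. This $K$-theoretic induction step is the delicate part, and it is where I expect the real work of \cite{FarkasLinnell2006} to lie. As a fallback, I would first try to use Neumann's valuation argument \cite{Neumann_completedGpAlgDomain}: extend the $I$-adic filtration to $\K\llbracket G\rrbracket$ and show directly that its associated graded ring has no zero divisors, using the torsion-freeness of $G$.
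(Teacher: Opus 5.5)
The paper does not prove this statement. It quotes \cite[Theorem 6.1]{FarkasLinnell2006} for the Noetherian domain property and only adds that a Noetherian domain is Ore, hence embeds in its division ring of fractions. That is exactly your final deduction.

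Your Step 1 and the Ore localisation $S=D*(G/U)$ in Step 2 are fine. But your reconstruction of the main claim has a genuine gap. Everything rests on Step 3: the assertion that $G_0(\K\llbracket G\rrbracket)$ is generated by modules induced from finite subgroups, i.e.\ a Moody-type induction theorem for completed group algebras of $p$-adic analytic groups. You give no argument for this, and it is exactly where torsion-freeness has to enter. So as written the proposal does not show that $\K\llbracket G\rrbracket$ is a domain.

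Step 2 also contains a false step. A torsion-free compact $p$-adic analytic group is pro-$p$: each $\overline{\langle g\rangle}$ is a torsion-free, virtually pro-$p$ procyclic group, hence $\cong\Z_p$ or trivial. So $G/U$ is a $p$-group and $|G/U|=0$ in $\K$. The standard crossed-product argument for semiprimeness (Fisher--Montgomery) needs $|G/U|$ to be invertible; indeed $\K\llbracket U\times C_p\rrbracket$ is not semiprime. Nor does the absence of non-trivial central idempotents make an Artinian crossed product semisimple: $D[C_p]\cong D[y]/(y^p)$ has no non-trivial idempotents at all. So the conclusion $S\cong M_n(E)$ is unsupported.

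The missing idea is that torsion-freeness gives finite global dimension. This settles Step 3 at once and makes the semiprimeness and primeness discussion unnecessary.
\begin{itemize}
\item Since $G$ is pro-$p$, $\K\llbracket G\rrbracket$ is a Noetherian local ring with residue field $\K$, so finitely generated projectives are free.
\item By Serre's theorem $\cd_p G=\cd_p U<\infty$, and by Brumer's theorem the global dimension of $\K\llbracket G\rrbracket$ is $\cd_p G$.
\item Hence every finitely generated $\K\llbracket G\rrbracket$-module $M$ has a finite free resolution. The function $M\mapsto\dim_D(S\otimes M)$ is additive because $S$ is a flat Ore localisation, so it takes values in $|G/U|\,\Z=(\dim_D S)\,\Z$.
\item Every left ideal $L$ of $S$ is $S\otimes M$ for a finitely generated $\K\llbracket G\rrbracket$-submodule $M\subseteq L$. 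So a nonzero $L$ has $\dim_D L\geqslant\dim_D S$, i.e.\ $L=S$.
\end{itemize}
Thus $S$ is a division ring, and $\K\llbracket G\rrbracket\subseteq S$ is a domain. This is essentially Neumann's argument \cite{Neumann_completedGpAlgDomain}. It needs no Goldie theory for $\K\llbracket G\rrbracket$ and no induction theorem.

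Your fallback is also not an argument as it stands. For non-uniform $G$, the $I$-adic graded ring of $\K\llbracket G\rrbracket$ is the restricted enveloping algebra of the Jennings Lie algebra of $G$, and you do not explain how torsion-freeness would make it a domain.
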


The last part of the theorem follows from the standard fact that Noetherian domains are Ore domains and thus embed into a division ring of fractions \cite[p.\ 47]{McConnellRobsonNNR}.

Farkas--Linnell \cite[Theorem 1.1]{FarkasLinnell2006} show that torsion-free compact $p$-adic analytic groups satisfy the Strong Atiyah Conjecture over $\overline{\Q}$, the algebraic closure of $\Q$, and Jaikin-Zapirain \cite[Theorem 1.1]{JaikinZapirain_basechange} proves that if a sofic group satisfies the Strong Atiyah Conjecture over $\overline{\Q}$, then it also satisfies it over $\C$. Consequently we have that following result. We remark that this could also be deduced from an approximation result of Harris \cite[Lemma 1.10.1]{Harris_padicdescent_1979}.

\begin{thm} \label{thm:p-adic-atiyah}
    If $G$ is a torsion-free compact $p$-adic analytic group, then $G$ satisfies the Strong Atiyah Conjecture over $\C$.
\end{thm}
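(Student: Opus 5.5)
The statement is essentially a concatenation of two cited results, so I will argue by combining them. Let $G$ be a torsion-free compact $p$-adic analytic group. Since $G$ may be uncountable, the convention in force means I have to show that every finitely generated subgroup $H \leqslant G$ satisfies the Strong Atiyah Conjecture over $\C$. Because $G$ is torsion-free, so is $H$, and $\mathrm{lcm}(H) = 1$. The goal is therefore to show that $\rk_{\mathcal N(H)}(A) \in \Z$ for every finite matrix $A$ over $\C[H]$.

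\emph{Step 1: the case $\K = \overline{\Q}$.} By the Farkas--Linnell theorem \cite[Theorem 1.1]{FarkasLinnell2006}, $G$ satisfies the Strong Atiyah Conjecture over $\overline{\Q}$. I need to check that their statement covers arbitrary finitely generated subgroups $H$ of $G$, which is the standard formulation. If it were instead stated only for dense subgroups, I would pass to $H$ via \cref{lem:SAC_subgroups}, using that $G$ and its subgroups are torsion-free.

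\emph{Step 2: soficity of $H$.} By the characterisation of compact $p$-adic analytic groups recalled above, $G$ embeds in $\GL_n(\Z_p)$. Hence $H$ is a finitely generated linear group, and so residually finite by Malcev's theorem. Alternatively, $H$ is residually finite because $G$ is profinite. In either case $H$ is sofic.

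\emph{Step 3: change of coefficients.} Jaikin-Zapirain's base change theorem \cite[Theorem 1.1]{JaikinZapirain_basechange} says that a sofic group satisfying the Strong Atiyah Conjecture over $\overline{\Q}$ satisfies it over $\C$. Applying this to $H$ gives the result.

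The only point needing care is the bookkeeping between the finitely generated (countable) subgroups $H$ and the uncountable group $G$. In particular, the Farkas--Linnell conclusion must be applied to each $H$, and soficity must likewise be verified for each $H$. Neither step poses a real obstacle.

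As a fallback, one could instead use Harris's approximation lemma \cite[Lemma 1.10.1]{Harris_padicdescent_1979} together with \cref{thm:sofic_approx}. The approach would be to take the residual chain $H \cap G_k$, where $(G_k)$ is a basis of open normal subgroups of $G$. The approximating quotients $H/(H \cap G_k)$ are finite, though, so their ranks lie in $\frac{1}{|H/(H\cap G_k)|}\Z$ rather than $\Z$. This route would therefore need a genuine argument, which is exactly what Harris provides. I would use the citation route described above.
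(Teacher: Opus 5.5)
Your proposal is correct and follows the same route as the paper. Both combine Farkas--Linnell's Strong Atiyah Conjecture over $\overline{\Q}$ for torsion-free compact $p$-adic analytic groups with Jaikin-Zapirain's base change theorem for sofic groups, and your extra bookkeeping (residual finiteness of finitely generated subgroups of a profinite group, and Harris's lemma as an alternative) fills in details the paper leaves implicit.
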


Finally, the fact that the class of $p$-adic analytic groups is closed under extensions will be crucial in the proofs of the main results.

\begin{lem}[{\label{lem:p_adic_extension}\cite[Theorems 9.6 and 9.7]{DixonEtAl_analyticPropBook}}]
    Let $G$ be a Hausdorff topological group and $N \trianglelefteqslant G$ be a closed normal subgroup. Then $G$ is $p$-adic analytic if and only if $N$ and $G/N$, with the induced and quotient topologies respectively, are $p$-adic analytic.
\end{lem}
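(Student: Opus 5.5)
We plan to work throughout with Lazard's characterisation of $p$-adic analytic groups, as developed in \cite{DixonEtAl_analyticPropBook}: a topological group is $p$-adic analytic if and only if it has an open subgroup that is a pro-$p$ group of finite rank. Equivalently, the open subgroup can be taken to be a finitely generated uniform, i.e.\ powerful and torsion-free, pro-$p$ group. Here the \emph{rank} of a profinite group $K$ is $\sup\{d(L) : L \leqslant K \text{ closed}\}$, where $d$ denotes the minimal number of topological generators. Both directions of the statement then become statements about this rank.

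For the forward direction, suppose $G$ is $p$-adic analytic and fix an open pro-$p$ subgroup $U \leqslant G$ of finite rank $r$. Then $U \cap N$ is open in $N$, and it is a closed subgroup of $U$, so it is pro-$p$ of rank at most $r$. Hence $N$ is analytic. Likewise $UN/N$ is open in $G/N$ because the quotient map is open, and it is isomorphic as a topological group to $U/(U\cap N)$, since $U$ is compact. Quotients of $U$ by closed normal subgroups are pro-$p$ of rank at most $r$, because $d$ does not increase under quotients and closed subgroups of a quotient lift to closed subgroups of $U$. Hence $G/N$ is analytic.

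For the converse, fix an open pro-$p$ subgroup $U_N \leqslant N$ of finite rank and an open pro-$p$ subgroup $V \leqslant G/N$ of finite rank. Let $H$ be the preimage of $V$ in $G$, so $H$ is open in $G$.

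The main obstacle is topological. A priori $G$ is only a Hausdorff group, so we must show that $H$ is profinite. We first show that $H$ is compact, using that it is an extension of the compact group $N$ by the compact group $V$. Any ultrafilter (or net) on $H$ has a convergent image in $V$. Lifting the limit and translating by elements of $N$, compactness of $N$ produces a limit in $H$. Next, $H$ is totally disconnected. Indeed, the identity component of $H$ maps into the identity component of $V$, which is trivial, so it lies in $N$, which is itself totally disconnected. Therefore $H$ is a compact, Hausdorff, totally disconnected group, that is, a profinite group.

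Now choose an open normal subgroup $K \trianglelefteqslant H$ such that $K \cap N \subseteq U_N$. Such a $K$ exists because the open normal subgroups of a profinite group form a neighbourhood basis of the identity. Then $K \cap N$ is open in $U_N$ and hence pro-$p$. Also $KN/N \leqslant V$ is pro-$p$. Thus $K$ is an extension of a pro-$p$ group by a pro-$p$ group, and so it is pro-$p$. For any closed subgroup $L \leqslant K$ we have
\[
d(L) \leqslant d(L \cap N) + d(LN/N),
\]
which gives
\[
\rk(K) \leqslant \rk(U_N) + \rk(V) < \infty.
\]
Thus $K$ is an open pro-$p$ subgroup of $G$ of finite rank, and Lazard's characterisation shows that $G$ is $p$-adic analytic.
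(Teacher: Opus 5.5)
The forward direction is correct, and so is your final extension and rank argument once you have a profinite open subgroup to work in. The gap is in the converse, in the sentence asserting that $H$ is compact because it is an extension of ``the compact group $N$'' by $V$. Nothing in the hypotheses makes $N$ compact. A $p$-adic analytic group is only locally compact: it has the compact open subgroup $U_N$, but it need not be compact itself. So the full preimage $H$ of $V$ need not be compact. For example, take $G=N=\Q_p$, or $G=\Q_p\times\Z_p$ with $N=\Q_p\times\{0\}$; in both cases $H=G$. Your total-disconnectedness argument is fine. But without compactness there is no profinite group in which to choose the open normal subgroup $K$. Producing a compact open subgroup of $G$ is exactly the non-trivial point of the converse.

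To close the gap, you need the classical fact that if $N$ is a closed normal subgroup of a Hausdorff group $G$ with $N$ and $G/N$ locally compact, then $G$ is locally compact (see Hewitt--Ross, \emph{Abstract Harmonic Analysis I}, \S5). Its proof picks an identity neighbourhood $O$ with $\overline{O}\cap N$ compact, then shows that the closure of a sufficiently small identity neighbourhood, whose image in $G/N$ has compact closure, is compact. Next, run your identity-component argument for $G$ rather than for $H$; this shows $G$ is totally disconnected. Van Dantzig's theorem then gives a compact open, hence profinite, subgroup $W\leqslant G$. Choose $K\trianglelefteqslant_o W$ with $K\subseteq H$ and $K\cap N\subseteq U_N$, and your extension and rank argument goes through unchanged. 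Your proof as written is already valid when $N$ is compact, since then $G\to G/N$ is a proper map. That case covers the only use of the lemma in the paper, where $G=T_p(\mathbf G)/T^{(n)}(\mathbf G)$ is itself compact. The paper does not prove the lemma; it cites Dixon--du Sautoy--Mann--Segal.
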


\section{The Magnus embedding of a pro-\texorpdfstring{$p$}{p} RAAG} \label{sec:magnus_RAAG}

We begin this section by revisiting the Magnus embedding of a right-angled Artin group. The idea of embedding a group into the unit group of a ring of power series goes back to Magnus, who showed that free groups admit such embeddings \cite[Satz I]{Magnus_freegrpsRTFN}. The Magnus embedding is a powerful tool to study the lower central series of a free group; in particular, one can use it to show that free groups are residually torsion-free nilpotent and bi-orderable. Magnus's embedding was extended by Lazard \cite{Lazard1954}, who showed that free pro-$p$ groups (i.e.\ the pro-$p$ completion of a discrete free group) admit similar embedding into power series rings with $\Z_p$ coefficients. In \cite{DuchampKrob_RAAGsRTFN}, Duchamp and Krob show that RAAGs admit Magnus embeddings. We extend this result to show that the pro-$p$ completion of a RAAG has a Magnus embedding over $\Z_p$. Once this is achieved we will use it to study the residual properties of the Torelli group of $\Out(\mathbf A_\Gamma)$ for a finitely generated pro-$p$ RAAG $\mathbf A_\Gamma$.

Let $R$ be a unital ring and let $X$ be an abstract set. The \emph{free (associative, unital) $R$-algebra} on $X$ is denoted by $R\langle X \rangle$; it is the free object in the category of unital $R$-algebras on $|X|$ generators. We think of $R\langle X\rangle$ as a polynomial ring in the non-commuting variables of $X$, with no imposed relations between them. Let $\Gamma$ be a simplicial graph and let $V(\Gamma)$ and $E(\Gamma)$ be its vertex and edge sets, respectively. Define a two-sided ideal of $R\langle V(\Gamma)\rangle$ by
\[
    c_\Gamma = \left( [u,v] : \{u,v\} \in E(\Gamma) \right),
\]
where $[u,v] = uv - vu$. The \emph{free partially commutative $R$-algebra} with commutation relations induced by $\Gamma$ is $R\langle \Gamma \rangle = R\langle V(\Gamma)\rangle / c_\Gamma$. So $R\langle \Gamma \rangle$ is a polynomial ring in the partially commuting variables $V(\Gamma)$.

\begin{defn}[The Magnus ring of a graph]
    Let $\omega_{R\langle\Gamma\rangle} \subseteq R\langle\Gamma\rangle$ be the two-sided ideal generated by $X = V(\Gamma)$; we will occasionally write $\omega$ for $\omega_{R\langle\Gamma\rangle}$ when no confusion can arise. The \emph{Magnus ring} associated to the graph $\Gamma$ with coefficients in $R$ is
    \[
        R\llangle \Gamma \rrangle = \varprojlim_{n \in \N} R\langle\Gamma\rangle/\omega^n.
    \]
\end{defn}

Note that $V(\Gamma)$ still embeds into $R\llangle \Gamma\rrangle$, and the Magnus ring $R\llangle\Gamma\rrangle$ can be thought of as the ring of infinite formal power series in the partially commuting elements of $V(\Gamma)$, where elements of $V(\Gamma)$ commute if and only if they form an edge of $\Gamma$. We also define $\omega_{R\llangle\Gamma\rrangle}$ to be the two-sided ideal of $R\llangle \Gamma\rrangle$ generated by $V(\Gamma)$.

The Magnus ring $R\llangle \Gamma\rrangle$ has a rich unit group, which in particular contains all elements in $1 + \omega_{R\llangle\Gamma\rrangle}$. Indeed, if $f \in \omega_{R\llangle\Gamma\rrangle}$, then the inverse of $1 + f$ is given by $\sum_{i=0}^\infty (-f)^i$. Hence, $1 + \omega_{R\llangle\Gamma\rrangle}$ is actually a subgroup of the unit group $R\llangle \Gamma\rrangle^\times$. 

\begin{defn}[Right-angled Artin groups]
    Let $\Gamma$ be a simplicial graph and let $S$ be a set in one-to-one correspondence with $V(\Gamma)$ via the map $v \mapsto s_v$. The \emph{free partially commutative group} or \emph{right-angled Artin group} (also abbreviated as \emph{RAAG}) on $\Gamma$ is the group $A_\Gamma$ defined by the presentation
    \[
        \left\langle S \mid [s_u,s_v] \ \text{if and only if} \ \{u,v\} \in E(\Gamma) \right\rangle.
    \]
\end{defn}

By the following result of Duchamp and Krob, $1 + \omega_{R\llangle\Gamma\rrangle}$ contains a copy of $A_\Gamma$.

\begin{thm}[{\cite[Theorems 1.2 and 2.2]{DuchampKrob_RAAGsRTFN}}]\label{thm:DuchampKrob}
    Let $A_\Gamma$ be the RAAG on $\Gamma$ and let $R$ be a ring.
    \begin{enumerate}
        \item There is a homomorphism
        \[
            \mu_R \colon A_\Gamma \to 1 + \omega_{R\llangle\Gamma\rrangle}
        \]
        determined by $\mu_R(s_v) = 1 + v$.
        \item The map $\mu_R$ is injective.
        \item For all $n \in \N$, we have $\mu_\Z\inv(1 + \omega_{\Z\llangle\Gamma\rrangle}^n) = \gamma_n(A_\Gamma)$.
    \end{enumerate}
\end{thm}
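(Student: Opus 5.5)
The statement is the Duchamp--Krob theorem with three parts: existence of $\mu_R$, its injectivity, and the identification $\mu_\Z\inv(1+\omega^n_{\Z\llangle\Gamma\rrangle}) = \gamma_n(A_\Gamma)$. We prove them in that order, and the third part is the real content.

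For (1) we use the presentation of $A_\Gamma$. Each $1+v$ lies in the group $1+\omega_{R\llangle\Gamma\rrangle}$, so it suffices to check the defining relations. If $\{u,v\}\in E(\Gamma)$, then $uv=vu$ in $R\langle\Gamma\rangle$. Hence $(1+u)(1+v)=1+u+v+uv=(1+v)(1+u)$, and the commutator of the images is trivial. The universal property of the presentation then gives the homomorphism $\mu_R$.

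The main work is a graded comparison, which we set up next. Filter the group $1+\omega$ by the subgroups $1+\omega^n$. We have $[1+\omega^a,1+\omega^b]\subseteq 1+\omega^{a+b}$: write $x=1+f$ and $y=1+g$; then $x\inv y\inv xy - 1 = x\inv y\inv(fg-gf)$, which lies in $\omega^{a+b}$. It follows by induction that $\mu_\Z(\gamma_n(A_\Gamma))\subseteq 1+\omega^n$, which is the easy inclusion in (3). The map $1+f\mapsto f \bmod \omega^{n+1}$ gives injective homomorphisms
\[
(1+\omega^n)/(1+\omega^{n+1})\hookrightarrow \omega^n/\omega^{n+1}.
\]
Here $\omega^n/\omega^{n+1}$ is the degree-$n$ part of $\Z\langle\Gamma\rangle$. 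Assembling these, $\mu_\Z$ induces a graded Lie ring map
\[
\gr(A_\Gamma)=\textstyle\bigoplus_n \gamma_n/\gamma_{n+1}\longrightarrow \Z\langle\Gamma\rangle,
\]
where the target carries the commutator bracket. This map sends $s_v\mapsto v$. Its image is therefore the Lie subring $L_\Gamma$ generated by $V(\Gamma)$.

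For the reverse inclusion in (3) it suffices to show that this graded map is injective in each degree. Granting this, take $g\in\gamma_k\setminus\gamma_{k+1}$ with $k<n$. Its image in degree $k$ is nonzero, so $\mu(g)\notin 1+\omega^{k+1}$, and in particular $\mu(g)\notin 1+\omega^n$. Injectivity in degree $k$ amounts to the following claim, which we expect to be the main obstacle: $\gr(A_\Gamma)$ is the free partially commutative Lie ring $\mathcal L_\Gamma$ on $\Gamma$, and $\mathcal L_\Gamma\to\Z\langle\Gamma\rangle$ is injective, with $\Z\langle\Gamma\rangle$ its universal enveloping algebra.

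We would argue this claim in three steps.
\begin{enumerate}
\item There is a surjection $\mathcal L_\Gamma\to\gr(A_\Gamma)$, since $\gr(A_\Gamma)$ is generated in degree one by the $s_v$, which satisfy the commutation relations.
\item The enveloping algebra of $\mathcal L_\Gamma$ is $\Z\langle\Gamma\rangle$, by comparing universal properties.
\item $\mathcal L_\Gamma$ is free as a $\Z$-module, so PBW applies and $\mathcal L_\Gamma\hookrightarrow U(\mathcal L_\Gamma)$.
\end{enumerate}
To prove freeness in step (3) we would use a Lyndon-type basis for partial commutation (Duchamp--Krob's basis of the free partially commutative Lie algebra via Lazard elimination). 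Concretely, pick a vertex $v$ and write $\mathcal L_\Gamma = \mathcal L_{\Gamma-v}\ltimes I$. Here $I$ is a free Lie ring on the elements $[\dots[v,w_1],\dots,w_k]$ with $w_i$ ranging over a basis of the enveloping algebra of the non-link part, and we conclude by induction on $|V(\Gamma)|$. Composing $\mathcal L_\Gamma\to\gr(A_\Gamma)\to\Z\langle\Gamma\rangle$ gives the PBW embedding. Hence the first map is an isomorphism and the second is injective, which proves (3).

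Finally, (2) follows from (3) for $R=\Z$. Suppose $\mu_\Z(g)=1$. Then $\mu_\Z(g)\in 1+\omega^n$ for every $n$, so $g\in\bigcap_n\gamma_n(A_\Gamma)$. Also $\gr(A_\Gamma)\cong\mathcal L_\Gamma$ is torsion-free, and $\mathcal L_\Gamma$ is generated in degree one, hence finitely generated in each degree. These two facts give the residual nilpotence of $A_\Gamma$. One can obtain it more directly from the normal form: for reduced $g$, the coefficient of the word of $g$ in $\mu(g)$ is nonzero. So $g=1$. For a general ring $R$, $\mu_R$ is obtained from $\mu_\Z$ via $\Z\llangle\Gamma\rrangle\to R\llangle\Gamma\rrangle$. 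Injectivity is then read off from the leading-term argument, noting that the leading coefficient of a reduced word is $\pm1$, which survives in any nonzero $R$.
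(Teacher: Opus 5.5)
The paper does not prove this theorem; it quotes it from Duchamp--Krob, so your sketch has to stand on its own. Parts (1) and (3) are sound. Filtering by $1+\omega^n$, passing to the graded Lie map $\gr(A_\Gamma)\to\mathfrak{Lie}(\Z\langle\Gamma\rangle)$, and factoring the PBW embedding of $\mathcal L_\Gamma$ through the surjection $\mathcal L_\Gamma\to\gr(A_\Gamma)$ is the standard argument, and it gives (3) once $\mathcal L_\Gamma$ is known to be $\Z$-free. Your description of the elimination step is wrong, however. The ideal generated by $v$ is free on the brackets $[\dots[v,w_1],\dots,w_k]$, where $w_1\cdots w_k$ runs over a $\Z$-basis of the quotient of $\Z\langle\Gamma-v\rangle$ by the one-sided ideal generated by the link of $v$ (traces not beginning with a link letter). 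It does not run over the enveloping algebra of the non-link vertices. For example, if $a$ is adjacent to $v$ and $b$ is adjacent to neither, then $[[v,b],a]\neq0$ is a free generator that your list misses.

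\textbf{The genuine gap is in (2).} Given (3), $\ker\mu_\Z=\bigcap_n\gamma_n(A_\Gamma)$, so injectivity is exactly residual nilpotence, and the graded argument says nothing about $\bigcap_n\gamma_n(A_\Gamma)$. You infer residual nilpotence from torsion-freeness of $\gr(A_\Gamma)$ and finite generation in each degree, but this inference is false. For $\mathrm{BS}(1,2)=\langle a,t\mid tat\inv=a^2\rangle$ one has $\gamma_2=\gamma_3\cong\Z[1/2]$, so $\gr$ is $\Z$ in degree one, yet $\bigcap_n\gamma_n\neq\{1\}$.

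The normal-form route you mention next is the right one, but in the partially commutative setting it is the whole content of injectivity, and you only assert it. Here is what is needed.
\begin{enumerate}
\item Write $g\neq1$ as $x_1^{n_1}\cdots x_k^{n_k}$ with $n_i\neq0$, such that between any two syllables in the same letter $a$ there is a letter not commuting with $a$. You can achieve this by shuffling and merging syllables, which lowers the syllable count.
\item Choosing $x_i^{m_i}$ from the $i$th factor of $\prod_i(1+x_i)^{n_i}$ contributes to the trace $x_1\cdots x_k$ only if every $m_i=1$. Indeed $\sum_i m_i=k$, so otherwise some $m_i\geq2$. Then two consecutive occurrences of $a=x_i$, say the $r$th and $(r+1)$st, have nothing between them. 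They are therefore adjacent in the projection to the free monoid on $\{a,b\}$ for every $b$ not commuting with $a$ (delete all other letters; this is well defined on traces). In $x_1\cdots x_k$, however, the $r$th and $(r+1)$st occurrences of $a$ are separated by such a $b$.
\item Hence the coefficient of $x_1\cdots x_k$ in $\mu(g)$ is $\prod_i n_i\neq0$.
\end{enumerate}

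This coefficient is not $\pm1$, so your transfer to an arbitrary ring fails in positive characteristic. Over $\F_2$, for instance, $\mu(s_a^2)=1+a^2$ and the coefficient $n_1=2$ of $a$ vanishes. In characteristic zero, which is all the paper uses, injectivity does follow from $\Z\llangle\Gamma\rrangle\hookrightarrow R\llangle\Gamma\rrangle$. In characteristic $p$ you need a separate argument. One option combines residual $p$-ness of $A_\Gamma$, the isomorphism $\F_p[A_\Gamma]/\Delta^n\cong\F_p\langle\Gamma\rangle/\omega^n$ (with $\Delta$ the augmentation ideal), and nilpotence of the augmentation ideal of $\F_p[P]$ for a finite $p$-group $P$.
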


When the ring $R$ is understood, we will write $\mu = \mu_R$. As a consequence, RAAGs are residually torsion-free nilpotent, and in particular they are residually $p$ for all primes $p$. Hence, $A_\Gamma$ embeds into its pro-$p$ completion $\mathbf A_\Gamma$. 

Let $R\langle\Gamma\rangle_0 = R\langle \Gamma\rangle$ and by induction, we let $R\langle\Gamma\rangle_{i+1}$ be the $R$-submodule of $R\langle\Gamma\rangle$ generated by $R\langle\Gamma\rangle_i \cdot V(\Gamma)$ and $V(\Gamma) \cdot R\langle\Gamma\rangle_i$. Then 
\[
    R\langle\Gamma\rangle = \bigoplus_{i \geqslant 0} R\langle\Gamma\rangle_i
\]
as an $R$-module. We call elements $f$ of $R\langle\Gamma\rangle$ lying in some $R\langle\Gamma\rangle_i$ \emph{homogeneous of degree $i$}. Any element $f \in R\llangle\Gamma\rrangle$ can be uniquely expressed as a series $\sum_{i \geqslant 0} f_i$ such that $f_i \in R\langle\Gamma\rangle_i$. We call $f_i$ the \emph{degree $i$ component} of $f$.

\begin{defn}[Lie algebras]\label{def:Lie_algebras}
    \begin{enumerate}
        \item Given an associative ring $R$, one always has a Lie algebra $\mathfrak{Lie}(R)$ whose underlying Abelian group coincides with that of $R$ and whose Lie bracket is defined by $[r,s] = rs - sr$. The associativity of $R$ ensures that $\mathfrak{Lie}(R)$ is indeed a Lie algebra.
        \item The \emph{graded Lie algebra} of a group $G$ is
        \[
            \mathfrak{gr}(G) = \bigoplus_{i \in \N} \gamma_i(G)/\gamma_{i+1}(G).
        \]
        Each factor $\gamma_i(G)/\gamma_{i+1}(G)$ is an Abelian group, and their Abelian group structures induce the addition on $\gr(G)$. The Lie bracket is induced by
        \[
            [g \gamma_{i+1}(G), h \gamma_{j+1}(G)] = [g,h] \gamma_{i+j+1}(G),
        \]
        where $g \in \gamma_i(G)$ and $h \in \gamma_j(G)$, and the commutator $[g,h]$ on the right-hand side is the group theoretic commutator $g\inv h\inv g h$.
    \end{enumerate}
\end{defn}

For each $i \in \N$, there is a well-defined map
\[
    \nu_i \colon \gamma_i(A_\Gamma)/\gamma_{i+1}(A_\Gamma) \to \Z\langle\Gamma\rangle_i, \quad g\gamma_{i+1}(A_\Gamma) \mapsto \mu(g)_i
\]
of Abelian groups, and together these induce a map
\[
    \nu = \bigoplus_{i \in \N} \nu_i \colon \mathfrak{gr}(A_\Gamma) \to \mathfrak{Lie}(\Z\langle\Gamma\rangle)
\]
We record two facts about the relationship between $\mathfrak{gr}(A_\Gamma)$ and $\mathfrak{Lie}(R\langle \Gamma\rangle)$ proved by Wade in \cite[Theorem 3.5 and Corollary 3.7]{Wade_JohnsonHomsHigherRank}.

\begin{thm}\label{thm:LieAlg_multiples}
    \begin{enumerate}
        \item The map $\nu$ is an injective homomorphism of Lie algebras, i.e.\ $\nu$ preserves the Lie bracket.
        \item\label{item:multiples} If $n \in \Z$ and $x \in \mathfrak{gr}(A_\Gamma)$, then $\nu(x) \in n \mathfrak{Lie}(\Z\langle\Gamma\rangle)$ if and only if $x \in n\mathfrak{gr}(A_\Gamma)$.
    \end{enumerate}
\end{thm}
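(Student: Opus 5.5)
The plan is to treat the two parts separately. Part (1) is a direct computation with the Magnus map $\mu=\mu_\Z$, using \cref{thm:DuchampKrob}. Part (2) reduces to showing that $\nu(\gr(A_\Gamma))$ is a direct summand of $\Z\langle\Gamma\rangle$ in each degree, which I would get from a Poincar\'e--Birkhoff--Witt argument over $\Z$.

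For part (1), first check that $\nu_i$ is well defined and additive. By \cref{thm:DuchampKrob}(3), if $g\in\gamma_i(A_\Gamma)$ then $\mu(g)=1+\mu(g)_i+(\text{terms of degree}>i)$. For $g,h\in\gamma_i(A_\Gamma)$, multiplying the series gives $\mu(gh)_i=\mu(g)_i+\mu(h)_i$. If $g\in\gamma_{i+1}(A_\Gamma)$ then $\mu(g)_i=0$, so $\nu_i$ descends to the quotient $\gamma_i/\gamma_{i+1}$.

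Injectivity is the converse of the last point. If $\mu(g)_i=0$ for $g\in\gamma_i(A_\Gamma)$, then $\mu(g)\in 1+\omega^{i+1}$, and \cref{thm:DuchampKrob}(3) gives $g\in\gamma_{i+1}(A_\Gamma)$.

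For the bracket, take $g\in\gamma_i$ and $h\in\gamma_j$, and write $\mu(g)=1+a+\cdots$ and $\mu(h)=1+b+\cdots$ with $a$ of degree $i$ and $b$ of degree $j$. Use the identity
\[
    \mu([g,h])-1=\mu(g)^{-1}\mu(h)^{-1}\bigl(\mu(g)\mu(h)-\mu(h)\mu(g)\bigr).
\]
The bracketed factor equals $ab-ba$ plus terms of degree $>i+j$, and the prefactor lies in $1+\omega$. Hence $\mu([g,h])_{i+j}=ab-ba$, which says $\nu$ preserves brackets.

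For part (2), the implication $x\in n\gr(A_\Gamma)\Rightarrow\nu(x)\in n\,\mathfrak{Lie}(\Z\langle\Gamma\rangle)$ is immediate from additivity. For the converse it suffices to show that, in each degree $i$, the cokernel $\Z\langle\Gamma\rangle_i/\nu(\gamma_i/\gamma_{i+1})$ is torsion-free. Indeed, if $\nu(x)=ny$, torsion-freeness of the cokernel puts $y=\nu(x')$, and injectivity of $\nu$ then gives $x=nx'$.

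To prove torsion-freeness I would proceed in three steps.
\begin{enumerate}
    \item Identify $\gr(A_\Gamma)$ with the free partially commutative Lie ring $L(\Gamma)$, the free Lie ring on $V(\Gamma)$ modulo $[u,v]$ for edges $\{u,v\}$. There is a surjection $L(\Gamma)\to\gr(A_\Gamma)$ sending $v\mapsto s_v\gamma_2$. Composing with $\nu$ gives the canonical Lie map $L(\Gamma)\to\mathfrak{Lie}(\Z\langle\Gamma\rangle)$.
    \item Show that $L(\Gamma)$ is a free $\Z$-module, for instance via the Duchamp--Krob basis of Lyndon-type elements on partially commutative words. Then $\Z\langle\Gamma\rangle$, which visibly has the universal property of the enveloping algebra $U(L(\Gamma))$, satisfies PBW over $\Z$. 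So $\Z\langle\Gamma\rangle$ is a free $\Z$-module on ordered monomials in a homogeneous basis of $L(\Gamma)$.
    \item Conclude. By PBW the canonical map $L(\Gamma)\to U(L(\Gamma))$ is injective, so $L(\Gamma)\cong\gr(A_\Gamma)$. Moreover the degree-one monomials show that $L(\Gamma)_i$ is a direct summand of $\Z\langle\Gamma\rangle_i$, which is exactly the torsion-freeness needed.
\end{enumerate}

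I expect step 2, freeness of $L(\Gamma)$ over $\Z$ so that PBW applies integrally, to be the main obstacle. I would first try to prove it by elimination: split off a vertex $v$, and write $L(\Gamma)$ as a semidirect sum of the Lie ring of the star of $v$ and a free partially commutative Lie ring on the words $[\ldots[w,v],\ldots,v]$. Then induct on $|V(\Gamma)|$, as in the classical Lazard elimination for free Lie rings.
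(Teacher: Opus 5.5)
Your outline is correct and takes a different route from the one the paper relies on. However, the self-contained elimination argument you sketch for Step 2 does not work as stated.

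\textbf{Comparison with the paper.} The paper gives no proof here and cites Wade. There, part (2) is deduced from the Lyndon heap basis of \cref{thm:Lyndon_basis} by a triangularity argument. Suppose $x=\sum_w\lambda_w\beta_w$ with $\nu(x)\in n\,\mathfrak{Lie}(\Z\langle\Gamma\rangle)$, and let $w$ be $\prec$-minimal with $n\nmid\lambda_w$. Then the coefficient of the monomial $w$ in $\nu(x)$ is $\lambda_w$ plus multiples of $n$, a contradiction. The appendix reruns exactly this argument in \cref{claim:coeffs_in_Z}.

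Your part (1) is the standard computation from \cref{thm:DuchampKrob}. Your deduction of part (2) is also correct: once $L(\Gamma)$ is known to be $\Z$-free, integral PBW shows that $\nu(\gamma_i/\gamma_{i+1})$ is a direct summand of $\Z\langle\Gamma\rangle_i$. This works because $\Z\langle\Gamma\rangle$ really is $U(L(\Gamma))$, and a homogeneous basis gives homogeneous PBW monomials.

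What your route buys is that it needs only abstract freeness, not a basis with unitriangular leading terms. It also yields $\gr(A_\Gamma)\cong L(\Gamma)$ and $\Z\langle\Gamma\rangle\cong U(\gr(A_\Gamma))$ as by-products. But if you obtain freeness by citing the Lalonde/Duchamp--Krob Lyndon basis, its triangularity already proves (2) directly, and PBW becomes superfluous.

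\textbf{The problem with your Step 2 elimination.} Take $L(\mathrm{st}(v))$ as the complement, as you propose. Then the kernel of $L(\Gamma)\to L(\mathrm{st}(v))$ fails your description in two ways.
\begin{enumerate}
\item It is not generated by the words $[\ldots[w,v],\ldots,v]$ once $v$ has neighbours. You also need $\operatorname{ad}(y)$ for $y\in\mathrm{lk}(v)$, for example to obtain $[w,y]$.
\item More seriously, it is not free partially commutative on such words. The same failure occurs if you use the ideal generated by all vertices other than $v$. Applying the derivation $\operatorname{ad}(v)$ to an edge relation $[u,w]=0$ with $u,w\notin\mathrm{st}(v)$ gives $[[u,v],w]+[u,[w,v]]=0$. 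Both terms are nonzero: expand in $\Z\langle\Gamma\rangle$ for $\Gamma$ an edge $u$--$w$ plus an isolated vertex $v$. So this is not a commutation relation between generators.
\end{enumerate}

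\textbf{How to repair it.} Elimination works with the roles reversed. Remove a single vertex $x$ and split $L(\Gamma)=L(\Gamma\smallsetminus x)\ltimes J_x$, where $J_x$ is the ideal generated by $x$. Set $U=\Z\langle\Gamma\smallsetminus x\rangle$ and $M_x=U/\sum_{y}yU$, where $y$ runs over the neighbours of $x$. Let the class of $m\in U$ correspond to $x$ acted on by $m$ through iterated brackets.

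A universal-property check then shows that $J_x$ is the free Lie ring, with no commutation relations at all, on the underlying $\Z$-module of $M_x$. Here $L(\Gamma\smallsetminus x)$ acts by the derivations extending the module structure. The right ideal $\sum_y yU$ is spanned by monomials, so $M_x$ is $\Z$-free. Free Lie rings on free $\Z$-modules are free (Hall basis), so induction on $|V(\Gamma)|$ gives Step 2.
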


We rephrase \ref{item:multiples} in a form we will use later.

\begin{cor}\label{cor:powers_mod_LCS}
    Let $g \in \gamma_i(A_\Gamma)$ for some $i \geqslant 1$. If $\mu(g)_i$ is a multiple of $n$, then there is an element $h \in \gamma_i(A_\Gamma)$ such that $g\gamma_{i+1}(A_\Gamma) = h^n \gamma_{i+1}(A_\Gamma)$.
\end{cor}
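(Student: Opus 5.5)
The plan is to read this off directly from \cref{thm:LieAlg_multiples}\ref{item:multiples}. Let $x := g\gamma_{i+1}(A_\Gamma) \in \gamma_i(A_\Gamma)/\gamma_{i+1}(A_\Gamma) \subseteq \gr(A_\Gamma)$, viewed as a homogeneous element of degree $i$. By definition of $\nu$ we have $\nu(x) = \nu_i(x) = \mu(g)_i$. The hypothesis says $\mu(g)_i = n f$ for some $f \in \Z\langle\Gamma\rangle_i$, so in particular $\nu(x) \in n\,\mathfrak{Lie}(\Z\langle\Gamma\rangle)$.

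Applying \cref{thm:LieAlg_multiples}\ref{item:multiples}, we obtain $x \in n\gr(A_\Gamma)$, say $x = ny$ with $y \in \gr(A_\Gamma)$. The one point to check is that $y$ may be taken homogeneous of degree $i$. Since $\gr(A_\Gamma) = \bigoplus_j \gamma_j(A_\Gamma)/\gamma_{j+1}(A_\Gamma)$ is a direct sum of Abelian groups and multiplication by $n$ respects this grading, write $y = \sum_j y_j$. Comparing degree-$j$ components of $x = ny$ gives $x = n y_i$, while $n y_j = 0$ for $j \neq i$. So we may replace $y$ by its degree-$i$ component $y_i$.

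Now choose $h \in \gamma_i(A_\Gamma)$ representing $y_i$. The group operation on $\gamma_i(A_\Gamma)/\gamma_{i+1}(A_\Gamma)$ is induced from multiplication in $A_\Gamma$, and this quotient is Abelian, written additively inside $\gr(A_\Gamma)$. Hence $n y_i$ is represented by $h^n$, which gives $g\gamma_{i+1}(A_\Gamma) = h^n\gamma_{i+1}(A_\Gamma)$, as required.

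There is no real obstacle here. The only care needed is the homogeneity bookkeeping above: part \ref{item:multiples} is stated for all of $\gr(A_\Gamma)$, so we must make sure the witness can be chosen in degree $i$, and the grading argument handles this.
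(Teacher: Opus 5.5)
Your proposal is correct and follows the same route as the paper. You identify $\nu(g\gamma_{i+1}(A_\Gamma)) = \mu(g)_i \in n\,\mathfrak{Lie}(\Z\langle\Gamma\rangle)$, apply \cref{thm:LieAlg_multiples}\ref{item:multiples}, and lift the resulting $n$th root to some $h \in \gamma_i(A_\Gamma)$. The one addition is your explicit check, by comparing graded components, that the root can be taken in degree $i$; the paper leaves this step implicit, and your argument for it is sound.
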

\begin{proof}
    We have $\nu(g \gamma_{i+1}(A_\Gamma)) = \mu(g)_i \in n \mathfrak{Lie}(\Z\langle\Gamma\rangle)$. By \cref{thm:LieAlg_multiples}, this implies that $g \gamma_{i+1}(A_\Gamma) \in n \mathfrak{gr}(A_\Gamma)$, which in turn implies there is some $h \in \gamma_i(A_\Gamma)$ such that $g = h^n$ modulo $\gamma_{i+1}(A_\Gamma)$. \qedhere
\end{proof}

Let $\Gamma$ be a finite simplicial graph and let $p$ be a prime. There are natural maps
\[
    \frac{\Z/p^{k+1}\langle\Gamma\rangle}{\omega_{\Z/p^{k+1}\langle\Gamma\rangle}^n} \to \frac{\Z/p^k\langle\Gamma\rangle}{\omega_{\Z/p^k\langle\Gamma\rangle}^n} \leftarrow \frac{\Z/p^k\langle\Gamma\rangle}{\omega_{\Z/p^k\langle\Gamma\rangle}^{n+1}}
\]
which form an inverse system as $n,k \in \N$ vary.

\begin{lem}\label{lem:top_structure}
    Let $\Gamma$ be a finite simplicial graph and let $p$ be a prime. There is an isomorphism
        \[
            \Z_p\llangle \Gamma \rrangle \cong \varprojlim_{n,k \in \N} \frac{\Z/p^k\langle\Gamma\rangle}{\omega_{\Z/p^k\langle\Gamma\rangle}^n}.
        \]
\end{lem}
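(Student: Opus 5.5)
The plan is to reduce everything to a statement about finitely many coefficients in each degree, using the grading. First I would record that $R\langle\Gamma\rangle_i$ is a free $R$-module. Its basis is the set $B_i$ of degree-$i$ elements of the free partially commutative monoid on $V(\Gamma)$, i.e.\ words of length $i$ modulo the commutations coming from edges of $\Gamma$. This holds because $c_\Gamma$ is spanned by differences of words, so $R\langle\Gamma\rangle$ is the monoid algebra of that trace monoid. Since $\Gamma$ is finite, each $B_i$ is finite. The ideal $\omega^n$ is then exactly $\bigoplus_{i\geqslant n} R\langle\Gamma\rangle_i$, so
\[
    R\langle\Gamma\rangle/\omega^n \cong \bigoplus_{i<n} R\langle\Gamma\rangle_i \cong R^{N_n}, \qquad N_n = \textstyle\sum_{i<n}|B_i|,
\]
naturally in $R$. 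Consequently $R\llangle\Gamma\rrangle \cong \prod_{i\geqslant 0} R^{B_i}$ as $R$-modules, with multiplication given by the usual finite convolution formulas on monoid elements.

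Next I would apply this with $R = \Z/p^k$. The doubly indexed system becomes $\big((\Z/p^k)^{N_n}\big)_{n,k}$, and its transition maps are reduction mod $p^k$ together with truncation. These commute, and they are compatible with the multiplication, because the structure constants of the monoid algebra are integers independent of $R$. The key step is to compute the limit over $\N\times\N$ as an iterated limit. I would take the limit over $k$ first, which gives $\varprojlim_k (\Z/p^k)^{N_n} = \Z_p^{N_n} \cong \Z_p\langle\Gamma\rangle/\omega^n$; this is exactly where finiteness of each $B_i$ is used. Taking the limit over $n$ then gives $\Z_p\llangle\Gamma\rrangle$ by definition. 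Alternatively, one can note that the diagonal $\{(n,n)\}$ is cofinal in the directed poset $\N\times\N$, so the limit equals $\varprojlim_n \Z/p^n\langle\Gamma\rangle/\omega^n$. One then checks directly that this is $\prod_i \Z_p^{B_i}$, since a compatible family determines each coefficient modulo every $p^k$.

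The resulting isomorphism is the canonical map $\Z_p\llangle\Gamma\rrangle \to \varprojlim_{n,k}$ induced by the ring maps $\Z_p\langle\Gamma\rangle/\omega^n \to \Z/p^k\langle\Gamma\rangle/\omega^n$. It is therefore a ring homomorphism, and bijectivity follows from the coefficientwise description above. The main obstacle is only bookkeeping: justifying the freeness and basis description of $R\langle\Gamma\rangle$ uniformly in $R$, via the normal form for trace monoids, and checking that $\omega^n$ matches the degree filtration. Once those are in place, the interchange of limits is formal.
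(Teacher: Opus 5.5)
Your proposal is correct and follows the paper's proof. The paper likewise writes the double limit as the iterated limit $\varprojlim_n \varprojlim_k$, identifies the inner limit with $\Z_p\langle\Gamma\rangle/\omega^n$, and then invokes the definition of $\Z_p\llangle\Gamma\rrangle$. Your version adds the justification the paper leaves implicit for the inner step: $R\langle\Gamma\rangle/\omega^n$ is free of finite rank $N_n$, naturally in $R$, via the trace-monoid basis, and this is exactly where finiteness of $\Gamma$ is needed.
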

\begin{proof}
    We have
    \[
        \varprojlim_{n,k} \frac{\Z/p^k\langle\Gamma\rangle}{\omega_{\Z/p^k\langle\Gamma\rangle}^n} = \varprojlim_n \varprojlim_k \frac{\Z/p^k\langle\Gamma\rangle}{\omega_{\Z/p^k\langle\Gamma\rangle}^n} = \varprojlim_n \frac{\Z_p\langle\Gamma\rangle}{\omega_{\Z_p\langle\Gamma\rangle}^n} = \Z_p\llangle\Gamma\rrangle. \qedhere
    \]
\end{proof}

\cref{lem:top_structure} gives $\Z_p\llangle\Gamma\rrangle$ the structure of a topological (in fact profinite) ring, induced by endowing each finite ring $\frac{\Z/p^k\langle\Gamma\rangle}{\omega^n}$ with the discrete topology. In the following lemma, the subset $1 + \omega_{\Z_p\llangle\Gamma\rrangle} \leqslant \Z_p\llangle\Gamma\rrangle$ is given the subspace topology.

\begin{lem}\label{lem:units_pro_p}
    If $\Gamma$ is a finite simplicial graph and let $p$ be a prime, then the group $1 + \omega_{\Z_p\llangle\Gamma\rrangle}$ is pro-$p$.
\end{lem}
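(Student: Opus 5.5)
We plan to write $1 + \omega_{\Z_p\llangle\Gamma\rrangle}$ as an inverse limit of finite $p$-groups, using the description of $\Z_p\llangle\Gamma\rrangle$ from \cref{lem:top_structure}. For $n,k \in \N$, write $R_{n,k} = \Z/p^k\langle\Gamma\rangle/\omega^n_{\Z/p^k\langle\Gamma\rangle}$ and let $\bar\omega_{n,k}$ be the image of $\omega_{\Z/p^k\langle\Gamma\rangle}$ in $R_{n,k}$. Since $\Gamma$ is finite, $R_{n,k}$ is a finite ring. Its order is a power of $p$, because it is a finitely generated $\Z/p^k$-module spanned by the finitely many monomials of degree $<n$. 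Moreover $\bar\omega_{n,k}$ is a nilpotent ideal with $\bar\omega_{n,k}^n = 0$. Hence $U_{n,k} := 1 + \bar\omega_{n,k}$ is a group: the inverse of $1+f$ is the finite sum $\sum_{i<n}(-f)^i$. As a set, $U_{n,k}$ is in bijection with $\bar\omega_{n,k}$, which is an additive subgroup of the $p$-group $R_{n,k}$, so $|U_{n,k}|$ is a power of $p$. Thus each $U_{n,k}$ is a finite $p$-group.

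Next we check that these groups form an inverse system whose limit is $1+\omega_{\Z_p\llangle\Gamma\rrangle}$. The transition ring homomorphisms $R_{n,k+1}\to R_{n,k}$ and $R_{n+1,k}\to R_{n,k}$ send $\bar\omega$ into $\bar\omega$. They therefore restrict to group homomorphisms between the $U_{n,k}$, giving an inverse system of finite $p$-groups. Let $\pi_{n,k}\colon \Z_p\llangle\Gamma\rrangle \to R_{n,k}$ be the projections. An element $x \in \Z_p\llangle\Gamma\rrangle$ lies in $1+\omega_{\Z_p\llangle\Gamma\rrangle}$ exactly when its constant term (the degree $0$ component) equals $1$. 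This holds exactly when $\pi_{n,k}(x) \in U_{n,k}$ for all $n,k$, since the constant term of $x$ is recovered from its images mod $p^k$ in $R_{1,k} = \Z/p^k$. To justify this, we would note that $\omega_{\Z_p\llangle\Gamma\rrangle}$ is precisely the set of series with zero constant term. Given such a series $f = \sum_{i\geqslant 1} f_i$, group its monomials by first letter to write $f = \sum_{v} v g_v$ with $g_v \in \Z_p\llangle\Gamma\rrangle$; this is a finite sum because $V(\Gamma)$ is finite. Under the isomorphism of \cref{lem:top_structure} we therefore obtain a bijection $1+\omega_{\Z_p\llangle\Gamma\rrangle} \cong \varprojlim_{n,k} U_{n,k}$. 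It is a group isomorphism because the $\pi_{n,k}$ are ring homomorphisms.

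Finally, the topology. The subspace topology on $1+\omega_{\Z_p\llangle\Gamma\rrangle}$ coincides with the inverse limit topology on $\varprojlim U_{n,k}$ with discrete finite factors. This is because $\varprojlim U_{n,k}$ sits inside $\varprojlim R_{n,k}$ as a closed subspace (an intersection of preimages of the subsets $U_{n,k}$). An inverse limit of finite $p$-groups is a pro-$p$ group, and the multiplication is continuous because it is induced levelwise.

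The only mildly delicate point is the identification of $\omega_{\Z_p\llangle\Gamma\rrangle}$ with the series having zero constant term. This is needed to match the two-sided ideal generated by $V(\Gamma)$ with the levelwise kernels of the augmentations. The first-letter decomposition above handles it, and everything else is routine.
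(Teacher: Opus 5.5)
Your proof is correct and is essentially the paper's argument. The paper shows that $\omega_{\Z_p\llangle\Gamma\rrangle}$, and hence $1+\omega_{\Z_p\llangle\Gamma\rrangle}$, is closed by detecting the constant term at the finite levels, and then writes $1+\omega_{\Z_p\llangle\Gamma\rrangle}=\varprojlim_{n,k}\pi_{n,k}(1+\omega_{\Z_p\llangle\Gamma\rrangle})$ with finite $p$-group factors, which is exactly your identification with $\varprojlim_{n,k}U_{n,k}$; your first-letter decomposition also makes explicit a fact the paper uses only implicitly, namely that $\omega_{\Z_p\llangle\Gamma\rrangle}$ consists precisely of the series with zero constant term.
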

\begin{proof}
    We claim that the ideal $\omega_{\Z_p\llangle\Gamma\rrangle}$ is closed in $\Z_p\llangle\Gamma\rrangle$. Let $\pi_{n,k}$ be the canonical projection 
    \[
        \Z_p\llangle\Gamma\rrangle \to \frac{\Z/p^k\langle\Gamma\rangle}{\omega_{\Z/p^k\langle\Gamma\rangle}^n}.
    \]
    By \cref{lem:top_structure},
    \[
        \omega_{\Z_p\llangle\Gamma\rrangle} \subseteq \overline{\omega_{\Z_p\llangle\Gamma\rrangle}} = \varprojlim_{n,k\in \N} \pi_{n,k}\left(\omega_{\Z_p\llangle\Gamma\rrangle} \right).
    \]
    If $f \notin \omega_{\Z_p\llangle\Gamma\rrangle}$, then $f_0 \neq 0$. But then $\pi_{n,k}(f)_0 \neq 0$ for any $n \geqslant 1$ and all sufficiently large $k$, and therefore
    \[
        f \notin \varprojlim_{n,k\in \N} \pi_{n,k}\left(\omega_{\Z_p\llangle\Gamma\rrangle} \right),
    \]
    which shows that $\omega_{\Z_p\llangle\Gamma\rrangle} = \overline{\omega_{\Z_p\llangle\Gamma\rrangle}}$.
    
    Since the map $x \mapsto x + 1$ is a homeomorphism, $1 + \omega_{\Z_p\llangle\Gamma\rrangle}$ is also closed. Hence, 
    \[
        1 + \omega_{\Z_p\llangle\Gamma\rrangle} = \varprojlim_{n,k \in \N} \pi_{n,k}\left( 1 + \omega_{\Z_p\llangle\Gamma\rrangle} \right).
    \]
    It is clear that $\pi_{n,k}\left( 1 + \omega_{\Z_p\llangle\Gamma\rrangle} \right)$ is a finite $p$-group, and therefore $1 + \omega_{\Z_p\llangle\Gamma\rrangle}$ is a pro-$p$ group. \qedhere
\end{proof}

We now show that the Magnus embedding induces a faithful representation of $\mathbf A_\Gamma$ into the power series ring $\Z_p\llangle\Gamma\rrangle$; this was done by Lazard in the case of a free group \cite{Lazard1954}. This is the pro-$p$ analogue of \cref{thm:DuchampKrob}.

\begin{thm}[Magnus embedding for pro-$p$ RAAGs]\label{thm:propMagnus}
    Let $\Gamma$ be a finite simplicial graph, let $A_\Gamma$ be the right-angled Artin group on $\Gamma$, let $p$ be a prime, and let $\mathbf A_\Gamma$ be the pro-$p$ completion of $A_\Gamma$. 
    \begin{enumerate}
        \item\label{item:cts_hom} There is a continuous homomorphism
        \[
            \mu_p \colon \mathbf A_\Gamma \to 1 + \omega_{\Z_p\llangle \Gamma \rrangle}
        \]
        determined by $\mu_p(s_v) = 1 + v$.
        \item\label{item:cts_hom_inj} The map $\mu_p$ is injective.
        \item\label{item:LCS_preimage} For all $n \in \N$, we have $\mu_p\inv(1 + \omega_{\Z_p\llangle\Gamma\rrangle}^n) = \gamma_n(\mathbf A_\Gamma)$.
    \end{enumerate}
\end{thm}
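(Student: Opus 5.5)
We build $\mu_p$ from the discrete Magnus map and then read off (2) and (3) from a single key claim about the lower central series of $\mathbf A_\Gamma$.

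\textbf{Step 1: existence of $\mu_p$.} Compose the discrete map $\mu_\Z$ of \cref{thm:DuchampKrob} with the natural inclusion $\Z\llangle\Gamma\rrangle \to \Z_p\llangle\Gamma\rrangle$. This gives a homomorphism $A_\Gamma \to 1+\omega_{\Z_p\llangle\Gamma\rrangle}$ with $s_v\mapsto 1+v$. By \cref{lem:units_pro_p} the target is a pro-$p$ group. The universal property of the pro-$p$ completion therefore extends this map uniquely to a continuous homomorphism $\mu_p\colon \mathbf A_\Gamma\to 1+\omega_{\Z_p\llangle\Gamma\rrangle}$, which proves \ref{item:cts_hom}.

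\textbf{Step 2: two facts about the filtration.} Write $U_n := 1+\omega_{\Z_p\llangle\Gamma\rrangle}^n$. Each $U_n$ is a closed normal subgroup, and $[U_i,U_j]\subseteq U_{i+j}$, so $\mu_p(\gamma_n(\mathbf A_\Gamma))\subseteq U_n$. This gives the inclusion $\gamma_n(\mathbf A_\Gamma)\subseteq \mu_p\inv(U_n)$. Here we use that in a finitely generated pro-$p$ group $\gamma_n$ is closed, equal to the closure of the image of $\gamma_n(A_\Gamma)$, and that $\mu_p$ is continuous.

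\textbf{Step 3: reverse inclusion by induction on $n$.} Let $g\in \mu_p\inv(U_{n+1})$. By induction $g\in\gamma_n(\mathbf A_\Gamma)$, and we must show $g\in\gamma_{n+1}(\mathbf A_\Gamma)$.
\begin{itemize}
\item The quotient $\gamma_n(\mathbf A_\Gamma)/\gamma_{n+1}(\mathbf A_\Gamma)$ is the pro-$p$ completion of the finitely generated abelian group $\gamma_n(A_\Gamma)/\gamma_{n+1}(A_\Gamma)$. That group is free abelian, since $\nu_n$ embeds it into $\Z\langle\Gamma\rangle_n$ and it is finitely generated. So the quotient is $\gamma_n(A_\Gamma)/\gamma_{n+1}(A_\Gamma)\otimes\Z_p$.
\item The degree-$n$ map $g\mapsto \mu_p(g)_n$ is a continuous homomorphism $\gamma_n(\mathbf A_\Gamma)/\gamma_{n+1}(\mathbf A_\Gamma)\to \Z_p\langle\Gamma\rangle_n$. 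It is the $\Z_p$-linear extension $\nu_n\otimes\Z_p$.
\item By \cref{thm:LieAlg_multiples}\ref{item:multiples} with $n$ replaced by powers of $p$, the image of $\nu_n$ is a pure (hence direct-summand) sublattice of the free module $\Z\langle\Gamma\rangle_n$. Purity is what guarantees that $\nu_n\otimes\Z_p$ stays injective.
\end{itemize}
Since $\mu_p(g)\in U_{n+1}$, we have $\mu_p(g)_n=0$, so $g\in\gamma_{n+1}(\mathbf A_\Gamma)$. This proves \ref{item:LCS_preimage}. Alternatively, one could argue with the finite quotients $A_\Gamma/\gamma_n(A_\Gamma)A_\Gamma^{p^k}$ and \cref{cor:powers_mod_LCS}, but the tensor argument seems cleaner.

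\textbf{Step 4: injectivity.} Since $\mathbf A_\Gamma$ is a finitely generated pro-$p$ group, the subgroups $\gamma_n(\mathbf A_\Gamma)$ form a chain with trivial intersection once it is known that $\mathbf A_\Gamma$ is residually nilpotent. We get this by showing the $\gamma_n(\mathbf A_\Gamma)$ are open modulo $p$-powers. Indeed, $\mathbf A_\Gamma/\gamma_n(\mathbf A_\Gamma)$ is the pro-$p$ completion of the finitely generated nilpotent group $A_\Gamma/\gamma_n(A_\Gamma)$, so
\[
\bigcap_n\gamma_n(\mathbf A_\Gamma)\subseteq \bigcap_n \ker\bigl(\mathbf A_\Gamma\to \widehat{(A_\Gamma/\gamma_n)}_p\bigr).
\]
Every finite $p$-quotient of $A_\Gamma$ is nilpotent, so it factors through some $A_\Gamma/\gamma_n(A_\Gamma)$. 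Hence the right-hand intersection is trivial, and \ref{item:LCS_preimage} gives $\ker\mu_p\subseteq\bigcap_n\gamma_n(\mathbf A_\Gamma)=1$, proving \ref{item:cts_hom_inj}.

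The main obstacle is Step 3: identifying $\gamma_n(\mathbf A_\Gamma)/\gamma_{n+1}(\mathbf A_\Gamma)$ with $\gamma_n(A_\Gamma)/\gamma_{n+1}(A_\Gamma)\otimes\Z_p$, and checking that $\nu_n\otimes\Z_p$ stays injective. The purity supplied by \cref{thm:LieAlg_multiples} is exactly what makes the latter work.
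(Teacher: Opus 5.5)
Your proposal is correct and is essentially the paper's argument: extend $\mu$ by the universal property of the pro-$p$ completion, identify the degree-$n$ component map on $\gamma_n(\mathbf A_\Gamma)/\gamma_{n+1}(\mathbf A_\Gamma)$ with $\Z_p\otimes\nu_n$ by continuity and density, and combine its injectivity with residual nilpotence (the paper gets (2) first and then (3), while you get (3) first and deduce (2), which amounts to the same thing). One small point: purity is not what makes $\nu_n\otimes\Z_p$ injective --- $\Z_p$ is torsion-free, hence flat over $\Z$, and that alone suffices, which is how the paper argues.
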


\begin{proof}
    By \cref{lem:units_pro_p}, $1 + \omega_{\Z_p\llangle \Gamma \rrangle}$ is a pro-$p$ group, so by the universal property of pro-$p$ completions there is a commutative square
    \[
        \begin{tikzcd}
            A_\Gamma \arrow[r,hook,"\mu"]\arrow[d,hook] & 1 + \omega_{\Z\llangle\Gamma\rrangle}\arrow[d,hook] \\
            \mathbf A_\Gamma \arrow[r,"\mu_p"] & 1 + \omega_{\Z_p\llangle\Gamma\rrangle} \nospacepunct{,}
        \end{tikzcd}
    \]
    where $\mu_p$ is continuous. This proves item \ref{item:cts_hom}.

    For any finitely generated group $G$, there is a natural isomorphism 
    \[
        \mathfrak{gr}(\mathbf{G}) \cong \Z_p \otimes \mathfrak{gr}(G).
    \]
    Moreover, there is an isomorphism
    \[
        \mathfrak{Lie}(\Z_p\langle\Gamma\rangle) \cong \Z_p \otimes \mathfrak{Lie}(\Z\langle\Gamma\rangle),
    \]
    so \cref{thm:LieAlg_multiples} yields the following commutative diagram of inclusions of Lie algebras
    \[
        \begin{tikzcd}
            \gr(A_\Gamma) \arrow[r,hook,"\nu"] \arrow[d,hook] & \mathfrak{Lie}(\Z\langle\Gamma\rangle) \arrow[d,hook] \\
            \gr(\mathbf A_\Gamma) \arrow[r,hook,"\Z_p \otimes \nu"] & \mathfrak{Lie}(\Z_p\langle\Gamma\rangle) \nospacepunct{.}
        \end{tikzcd}
    \]
    The injectivity of the lower map follows from the fact that $\Z_p$ is torsion-free. There is a second map
    \[
        \tau\colon \gr(\mathbf A_\Gamma) \to \mathfrak{Lie}(\Z_p\langle\Gamma\rangle), \quad g\gamma_{n+1}(\mathbf A_\Gamma) \mapsto \mu_p(g)_n,
    \]
    where we recall that $\mu_p(g)_n$ denotes the degree $n$ component of $\mu_p(g)$. This map is well defined, since it maps $\gamma_{n+1}(\mathbf A_\Gamma)$ to zero. Note that $\tau$ is continuous on each summand $\gamma_n(\mathbf A_\Gamma)/\gamma_{n+1}(\mathbf A_\Gamma)$ since $\mu_p$ is continuous. Moreover, $\tau$ agrees with $\Z_p \otimes \nu$ on the dense subset 
    \[
        \gamma_n(A_\Gamma)/\gamma_{n+1}(A_\Gamma) \subseteq \gamma_n(\mathbf A_\Gamma)/\gamma_{n+1}(\mathbf A_\Gamma),
    \]
    and therefore $\tau = \Z_p \otimes \nu$.

    We now show that $\mu_p$ is injective. Let $g \in \mathbf A_\Gamma$ be a non-trivial element. Since $\mathbf A_\Gamma$ is residually nilpotent, there is some $n \in \N$ such that $g \in \gamma_n(\mathbf A_\Gamma) \smallsetminus \gamma_{n+1}(\mathbf A_\Gamma)$. By injectivity of $\tau$, we have $\mu_p(g)_n = \tau(g\gamma_{n+1}(\mathbf A_\Gamma)) \neq 0$. This means that $\mu_p(g) \neq 0$, proving item \ref{item:cts_hom_inj}.

    We prove item \ref{item:LCS_preimage}. Since $\gamma_n(A_\Gamma)$ is mapped to $1 + \omega_{\Z_p}^n$ by \cref{thm:DuchampKrob} and $1 + \omega_{\Z_p}^n$ is closed, it follows that $\mu_p$ maps the closure $\gamma_n(\mathbf A_\Gamma)$ into $1 + \omega_{\Z_p}^n$ for all $n \in \N$. Conversely, suppose that $\mu_p(g) \in 1 + \omega_{\Z_p\llangle\Gamma\rrangle}^n$. If $g \notin \gamma_n(\mathbf A_\Gamma)$, then $g \in \gamma_c(\mathbf A_\Gamma) \smallsetminus \gamma_{c+1}(\mathbf A_\Gamma)$ for some $c < n$. Then 
    \[
        \tau(g\gamma_{c+1}(\mathbf A_\Gamma)) = \mu_p(g)_c = 0,
    \]
    contradicting the injectivity of $\tau$. So $g \in \gamma_n(\mathbf A_\Gamma)$, proving item \ref{item:LCS_preimage}. \qedhere
\end{proof}

We can use the Magnus embedding of the pro-$p$ RAAG $A_\Gamma$ to study the centre of $\gr(\mathbf A_\Gamma)$, much as in \cite[Section 3]{Wade_JohnsonHomsHigherRank}. We will need a result of Wade {\cite[Proposition 3.9]{Wade_JohnsonHomsHigherRank}}, which is stated for $\Z\langle\Gamma\rangle$. The coefficient ring plays no role and the same proof yields the following result over $\Z_p$.

\begin{prop}\label{prop:non-trivial-coefficient}
    Let $\Gamma$ be a finite simplicial graph such that $A_\Gamma$ is a centreless RAAG. If $x = \sum_{k=1}^n \lambda_k x_k$ is an element of $\Z_p\langle\Gamma\rangle$, where the elements $x_k$ are pairwise distinct monomials in $V(\Gamma)$, and $\lambda_k \in \Z_p$ for each $k$, then there is an element $v \in V(\Gamma)$ such that the coefficient of the term $g_k v$ in $[x,v]$ is $\lambda_k$.
\end{prop}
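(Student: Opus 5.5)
The plan is to reduce to a combinatorial statement about the free partially commutative (trace) monoid $M_\Gamma$ on $V(\Gamma)$. The point is that $\Z_p\langle\Gamma\rangle$ is the monoid algebra $\Z_p[M_\Gamma]$, a free $\Z_p$-module with basis the monomials, i.e.\ the elements of $M_\Gamma$. Since the coefficient ring enters only through this basis, the argument is the same as Wade's over $\Z$.

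I read the statement as follows: for a fixed index $k$ with $x_k \neq 1$, there is a vertex $v$ such that the coefficient of $x_k v$ in $[x,v]$ equals $\lambda_k$. If $x_k = 1$ the term commutes with everything, so nonconstant monomials are the relevant ones. Writing $[x,v] = xv - vx = \sum_j \lambda_j x_j v - \sum_j \lambda_j v x_j$, it suffices to find $v$ with two properties:
\begin{enumerate}
    \item $x_j v = x_k v$ only when $j=k$;
    \item $v x_j \neq x_k v$ for every $j$.
\end{enumerate}
Property (1) is automatic from right cancellativity of $M_\Gamma$, since the $x_j$ are pairwise distinct. So the whole task is to arrange property (2).

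For (2) I would use the standard description of the initial letters of a trace. Call $a \in V(\Gamma)$ an \emph{initial letter} of $w \in M_\Gamma$ if $w = a w'$ for some $w'$. Then:
\begin{itemize}
    \item the initial letters of $w$ pairwise commute;
    \item the initial letters of $w u$ for a letter $u$ are those of $w$, together with $u$ exactly when $u$ commutes with every letter occurring in $w$.
\end{itemize}
Both facts can be justified via Cartier--Foata normal form or via projection onto the free monoids on pairs of non-adjacent vertices (the standard embedding of trace monoids). If $v x_j = x_k v$, then $v$ is an initial letter of $x_k v$. So it suffices to choose $v$ that is neither an initial letter of $x_k$ nor commutes with all letters of $x_k$.

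Now I use the centreless hypothesis. $A_\Gamma$ has trivial centre exactly when no vertex of $\Gamma$ is adjacent to all other vertices; I can assume this standard fact, or check it quickly using the Magnus embedding of \cref{thm:DuchampKrob}. Let $a$ be an initial letter of $x_k$, which exists since $x_k \neq 1$. Since $a$ is not central, there is a vertex $v \neq a$ not adjacent to $a$.
\begin{itemize}
    \item $v$ is not an initial letter of $x_k$, because all initial letters of $x_k$ commute with $a$.
    \item $v$ does not commute with every letter of $x_k$, because $a$ occurs in $x_k$.
\end{itemize}
Hence $v$ is not an initial letter of $x_k v$, so no $j$ satisfies $v x_j = x_k v$. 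The coefficient of $x_k v$ in $vx$ is therefore $0$, and its coefficient in $xv$ is $\lambda_k$, as required.

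The main obstacle is rigorously justifying the initial-letter facts in $M_\Gamma$, in particular that $vw = w'v$ forces $v$ to be an initial letter of $w'v$ in the described way. I expect to cite the Cartier--Foata / Duchamp--Krob combinatorics here rather than reprove it.
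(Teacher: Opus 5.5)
Your proof is correct and takes essentially the same route as the paper, which gives no independent argument and instead defers to Wade's Proposition 3.9 over $\Z$, remarking that the coefficient ring plays no role. That remark is exactly your observation that $\Z_p$ enters only through the monomial basis, and your choice of $v$ non-adjacent to an initial letter $a$ of $x_k$ (for a fixed $k$ with $x_k \neq 1$, which is the reading actually used later in the paper) is the standard argument.

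The initial-letter fact you flag needs no citation: project onto the free monoid on $\{a,v\}$, where the image of $x_k v$ begins with $a$ while the image of every $v x_j$ begins with $v$, so $v x_j \neq x_k v$.
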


We can now apply \cref{thm:propMagnus} to obtain the pro-$p$ analogue of \cite[Theorem 3.11]{Wade_JohnsonHomsHigherRank}.

\begin{cor}\label{cor:trivial_centres}
    If $Z(A_\Gamma) = \{1\}$, then 
    \begin{enumerate}
        \item\label{item:centre_prop} $Z(\mathbf A_\Gamma) = \{1\}$;
        \item\label{item:centre_Lie} $Z(\gr(A_\Gamma)) = \{0\}$;
        \item\label{item:centre_Lie_prop} $Z(\gr(\mathbf A_\Gamma)) = \{0\}$;
        \item\label{item:centre_modp} $Z(\gr(A_\Gamma) \otimes \F_p) = Z(\gr(\mathbf A_\Gamma) \otimes_{\Z_p} \F_p) = \{0\}$.
    \end{enumerate}
\end{cor}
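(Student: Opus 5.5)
The plan is to derive everything from the embedding $\Z_p \otimes \nu \colon \gr(\mathbf A_\Gamma) \hookrightarrow \mathfrak{Lie}(\Z_p\langle\Gamma\rangle)$ from the proof of \cref{thm:propMagnus}, together with \cref{prop:non-trivial-coefficient}. The order will be \ref{item:centre_Lie_prop} first, then \ref{item:centre_Lie}, then \ref{item:centre_prop}, and finally \ref{item:centre_modp}.

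For \ref{item:centre_Lie_prop}, let $x \in Z(\gr(\mathbf A_\Gamma))$. Since the bracket respects the grading, each homogeneous component of $x$ is again central, so we may assume $x$ is homogeneous of degree $i$. Put $y = (\Z_p\otimes\nu)(x) = \sum_k \lambda_k x_k \in \Z_p\langle\Gamma\rangle_i$. Each vertex $v$ is the image of the class of $s_v$ in $\gamma_1/\gamma_2$. Because $\Z_p \otimes \nu$ is a Lie algebra homomorphism, centrality of $x$ gives $[y,v] = 0$ for every $v \in V(\Gamma)$. If $y \neq 0$, choose $k$ with $\lambda_k \neq 0$. \cref{prop:non-trivial-coefficient} then produces a vertex $v$ such that $[y,v]$ has coefficient $\lambda_k \neq 0$ on the monomial $x_k v$, which is a contradiction. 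So $y = 0$, and injectivity gives $x = 0$. The same argument with $\nu$ over $\Z$, using Wade's original statement or simply restricting along $\gr(A_\Gamma) \hookrightarrow \gr(\mathbf A_\Gamma)$ and noting that $\Z \subseteq \Z_p$, gives \ref{item:centre_Lie}.

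For \ref{item:centre_prop}, let $g \in Z(\mathbf A_\Gamma)$ with $g \neq 1$. Since $\mathbf A_\Gamma$ is residually nilpotent, we have $g \in \gamma_n(\mathbf A_\Gamma) \smallsetminus \gamma_{n+1}(\mathbf A_\Gamma)$ for some $n$. Its class in $\gr(\mathbf A_\Gamma)$ is then nonzero, and it commutes with every element: indeed $[g,h] = 1$ for all $h$, so the induced bracket vanishes in every degree. This contradicts \ref{item:centre_Lie_prop}.

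For \ref{item:centre_modp}, I first identify $\gr(\mathbf A_\Gamma)\otimes_{\Z_p}\F_p \cong \gr(A_\Gamma)\otimes\F_p$, using $\gr(\mathbf A_\Gamma) \cong \Z_p\otimes\gr(A_\Gamma)$. By \cref{thm:LieAlg_multiples}\ref{item:multiples}, $\nu$ induces an injective Lie homomorphism
\[
\gr(A_\Gamma)\otimes\F_p \hookrightarrow \mathfrak{Lie}(\F_p\langle\Gamma\rangle).
\]
To see injectivity, note that $\nu(x) \in p\,\mathfrak{Lie}(\Z\langle\Gamma\rangle)$ forces $x \in p\gr(A_\Gamma)$, and that $\mathfrak{Lie}(\Z\langle\Gamma\rangle)\otimes\F_p = \mathfrak{Lie}(\F_p\langle\Gamma\rangle)$ because $\Z\langle\Gamma\rangle$ is free on monomials. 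The argument of \ref{item:centre_Lie_prop} then runs verbatim over $\F_p$, since \cref{prop:non-trivial-coefficient} is coefficient-independent. The main obstacle I expect is exactly this step: checking that the mod-$p$ reduction stays injective, which needs the divisibility statement \ref{item:multiples} and the freeness of $\gr(A_\Gamma)$ as a $\Z$-module. Once those are in hand, the rest is routine.
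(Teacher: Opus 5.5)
Your proof is correct. For \ref{item:centre_Lie_prop} it is the paper's argument: reduce to a homogeneous element, push it into $\mathfrak{Lie}(\Z_p\langle\Gamma\rangle)$ via the injective Lie map $\Z_p\otimes\nu$, and apply \cref{prop:non-trivial-coefficient}.

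The rest is organised differently. The paper proves \ref{item:centre_prop} by a separate computation in the Magnus ring: if $x$ is the lowest-degree nonzero component of $\mu_p(g)-1$ and $[x,v]\neq 0$, then $[x,v]$ is the lowest-degree component of $\mu_p([g,s_v])-1$. It does not prove \ref{item:centre_Lie} or the vanishing of $Z(\gr(A_\Gamma)\otimes\F_p)$ at all, and cites Wade's Theorem 3.11 instead.

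You prove \ref{item:centre_Lie_prop} first and get \ref{item:centre_prop} for free. A central $g\in\gamma_n(\mathbf A_\Gamma)\smallsetminus\gamma_{n+1}(\mathbf A_\Gamma)$ gives a nonzero central element of $\gr(\mathbf A_\Gamma)$, and this uses only that pro-$p$ groups are residually nilpotent. You also make \ref{item:centre_modp} self-contained. The point is that \cref{thm:LieAlg_multiples}\ref{item:multiples} says exactly that $\nu\otimes\F_p\colon\gr(A_\Gamma)\otimes\F_p\to\mathfrak{Lie}(\F_p\langle\Gamma\rangle)$ is injective. After that, the purely combinatorial \cref{prop:non-trivial-coefficient} runs over $\F_p$. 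So your version trades the citation for a few lines and handles all four items with one mechanism.

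One small correction to your closing remark: freeness of $\gr(A_\Gamma)$ plays no role in the mod-$p$ injectivity. The kernel of $\gr(A_\Gamma)/p\gr(A_\Gamma)\to\Z\langle\Gamma\rangle/p\Z\langle\Gamma\rangle$ is $\nu\inv(p\Z\langle\Gamma\rangle)/p\gr(A_\Gamma)$, which vanishes by \ref{item:multiples} alone. Likewise, $\Z\langle\Gamma\rangle\otimes\F_p\cong\F_p\langle\Gamma\rangle$ holds simply by right exactness of $\otimes$.
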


\begin{proof}
  Let $g \in \mathbf A_\Gamma$ be a non-trivial element and let $x$ be the non-trivial homogeneous component of $\mu_p(g)$ of least positive degree. By \cref{prop:non-trivial-coefficient}, there is some $v \in \Gamma$ such that $[x,v] \neq \{0\}$. But $[x,v]$ is the homogeneous component of $\mu_p([g,v])$ of least positive degree, and therefore $[g,v] \neq 1$. We conclude that $Z(\mathbf A_\Gamma) = \{1\}$. This proves item \ref{item:centre_prop}.
  
    Item \ref{item:centre_Lie} was proven by Wade in \cite[Theorem 3.11]{Wade_JohnsonHomsHigherRank}. For item \ref{item:centre_modp}, note that
    \[
        \gr(\mathbf A_\Gamma) \otimes_{\Z_p} \F_p \cong \gr(A_\Gamma) \otimes \Z_p \otimes_{\Z_p} \F_p \cong \gr(A_\Gamma) \otimes \F_p,
    \]
    where a tensor product with no subscript indicates the tensor is being taken over $\Z$. But $Z(\gr(A_\Gamma) \otimes \F_p) = \{0\}$ by \cite[Theorem 3.11]{Wade_JohnsonHomsHigherRank} as well.

    Let $y \in \gr(\mathbf A_\Gamma)$ be a non-trivial element. Write $y = y_{i_1} + \cdots + y_{i_n}$, where each $y_{i_j} \in \gamma_{i_j}(\mathbf A_\Gamma) /\gamma_{i_j+1}(\mathbf A_\Gamma)$ is a non-zero element and the indices $i_j$ are pairwise distinct. For each $v \in V(\Gamma)$, the decomposition
    \[
        [y,v] = [y_{i_1},v] + \cdots + [y_{i_n},v]
    \]
    is also into homogeneous components of pairwise different degrees, so it suffices to assume that $y$ was homogeneous to begin with. Write $y = g\gamma_{n+1}(\mathbf A_\Gamma)$ for some $n \in \N$. Since $\mu_p(g)_n$ is non-zero by \cref{thm:propMagnus}, it follows that $[\mu_p(g),v]_{n+1}$ is non-zero for some $v \in V(\Gamma)$ by \cref{prop:non-trivial-coefficient}. In particular, $\nu([y,v]) \neq 0$ so $y$ is non-central. This concludes the proof of item \ref{item:centre_Lie_prop}. \qedhere
\end{proof}

Combined with \cref{cor:trivial_centres}, Asada's \cref{thm:Asada} yields the following pro-$p$ analogue of \cite[Corollary 4.8]{Wade_JohnsonHomsHigherRank}.

\begin{cor}\label{cor:prop_RTFN_torelli}
    If $Z(\mathbf A_\Gamma)=\{1\}$, then the Andreadakis--Johnson filtration
    \[
        \mathcal T(\mathbf A_\Gamma) = \mathcal T^{(1)}(\mathbf A_\Gamma) \geqslant \mathcal T^{(2)}(\mathbf A_\Gamma) \geqslant \dots
    \]
    is residual and has the property that every quotient $\mathcal T^{(n)}(\mathbf A_\Gamma)/\mathcal T^{(n+1)}(\mathbf A_\Gamma)$ is a finitely generated free $\Z_p$-module. In particular, $\mathcal T(\mathbf A_\Gamma)$ is residually torsion-free nilpotent.
\end{cor}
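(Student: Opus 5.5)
The plan is to apply Asada's \cref{thm:Asada} to $\mathbf G = \mathbf A_\Gamma$, so I need to verify its three hypotheses. Since $\mathbf A_\Gamma$ is the pro-$p$ completion of a finitely generated group, it is a finitely generated pro-$p$ group. The hypothesis $Z(\mathbf A_\Gamma) = \{1\}$ should be converted into $Z(A_\Gamma) = \{1\}$, because that is the form \cref{cor:trivial_centres} needs. This is easy: $A_\Gamma$ embeds in $\mathbf A_\Gamma$ and is dense there. Any $z \in Z(A_\Gamma)$ commutes with a dense subgroup, so by continuity of multiplication it commutes with all of $\mathbf A_\Gamma$. Hence $Z(A_\Gamma) \subseteq Z(\mathbf A_\Gamma) \cap A_\Gamma = \{1\}$. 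Alternatively, one can note that $Z(A_\Gamma)$ is generated by the central vertices, and these stay central in the completion.

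With $Z(A_\Gamma)=\{1\}$ in hand, \cref{cor:trivial_centres}\ref{item:centre_Lie_prop} gives hypothesis (1), $Z(\gr(\mathbf A_\Gamma)) = \{0\}$. Likewise \cref{cor:trivial_centres}\ref{item:centre_modp} gives hypothesis (2), $Z(\gr(\mathbf A_\Gamma)\otimes_{\Z_p}\F_p)=\{0\}$.

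For hypothesis (3) I will use the isomorphism $\gr(\mathbf A_\Gamma) \cong \Z_p \otimes \gr(A_\Gamma)$ from the proof of \cref{thm:propMagnus}. Each graded piece $\gamma_i(A_\Gamma)/\gamma_{i+1}(A_\Gamma)$ is a finitely generated abelian group. It is torsion-free by \cref{thm:LieAlg_multiples}: if $nx = 0$ with $n \neq 0$, then $n\nu(x) = 0$ in the torsion-free group $\Z\langle\Gamma\rangle$, so $\nu(x)=0$, and injectivity of $\nu$ gives $x=0$. So each graded piece is free abelian of finite rank, its tensor with $\Z_p$ is a free $\Z_p$-module, and $\gr(\mathbf A_\Gamma)$, being the direct sum of these, is free. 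Asada's theorem then says the filtration is residual and that every quotient $\mathcal T^{(n)}(\mathbf A_\Gamma)/\mathcal T^{(n+1)}(\mathbf A_\Gamma)$ is a finitely generated free $\Z_p$-module.

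For the final claim I take the chain $\mathcal T^{(n)}(\mathbf A_\Gamma)$. It is a central series of $\mathcal T(\mathbf A_\Gamma)$: the standard commutator property $[\mathcal T^{(i)},\mathcal T^{(j)}] \subseteq \mathcal T^{(i+j)}$ gives in particular $[\mathcal T^{(1)},\mathcal T^{(n)}]\subseteq \mathcal T^{(n+1)}$. I expect this to be part of Asada's framework, or else it can be checked directly on representatives in $T^{(i)}(\mathbf A_\Gamma)$. Therefore each quotient $\mathcal T(\mathbf A_\Gamma)/\mathcal T^{(n)}(\mathbf A_\Gamma)$ is nilpotent. It is also torsion-free, since it is an iterated extension of torsion-free $\Z_p$-modules. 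Residuality of the chain then yields residual torsion-free nilpotence.

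The main obstacle is bookkeeping rather than substance. One point is the identification $\gr(\mathbf A_\Gamma)\cong \Z_p\otimes\gr(A_\Gamma)$, which I take as established in the proof of \cref{thm:propMagnus}. The other is the central-series property of the Andreadakis--Johnson filtration on $\Out$, which needs care because of the passage modulo inner automorphisms.
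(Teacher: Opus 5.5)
Your proposal is correct and follows the paper's route: the paper gives no separate argument beyond saying that the corollary comes from feeding \cref{cor:trivial_centres} into Asada's \cref{thm:Asada}. You also supply details the paper leaves implicit, namely the passage from $Z(\mathbf A_\Gamma)=\{1\}$ to $Z(A_\Gamma)=\{1\}$ by density, the $\Z_p$-freeness of $\gr(\mathbf A_\Gamma)$ via $\gr(\mathbf A_\Gamma)\cong\Z_p\otimes\gr(A_\Gamma)$, and the deduction of residual torsion-free nilpotence from the fact that $\mathcal T^{(n)}(\mathbf A_\Gamma)$ is the image of $T^{(n)}(\mathbf A_\Gamma)$ in $\Out(\mathbf A_\Gamma)$.
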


If $\Lambda$ is a full subgraph of $\Gamma$, the Magnus embedding of \cref{thm:propMagnus} implies that $\mathbf A_\Lambda \leqslant \mathbf A_\Gamma$, since $\mathbf A_\Lambda$ can be thought of as sitting inside $\Z_p\llangle\Lambda\rrangle \subseteq \Z_p\llangle\Gamma\rrangle$ via the embedding $\mu_p$ of \cref{thm:propMagnus}. This observation also follows from \cite[Lemma 3.4]{Casals-RuizPintonelloZalesskii2025}. The centre of an arbitrary RAAG $A_\Gamma$ is isomorphic to $A_\Lambda$, where $\Lambda$ is a maximal clique subgraph of $\Gamma$ such that $\Gamma$ decomposes as a join with $\Lambda$. Algebraically, this means that $A_\Gamma$ decomposes as a direct product with $A_\Lambda$.

\begin{lem}
    If $\Lambda \leqslant \Gamma$ is the full clique subgraph such that $Z(A_\Gamma) = A_\Lambda$, then $Z(\mathbf A_\Gamma) = \mathbf A_\Lambda$.
\end{lem}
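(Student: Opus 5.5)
We plan to reduce the statement to the centreless case, using the direct product decomposition of $A_\Gamma$. Let $\Delta$ be the full subgraph of $\Gamma$ spanned by $V(\Gamma)\smallsetminus V(\Lambda)$. Since $\Lambda$ is the maximal clique join factor, $\Gamma = \Lambda * \Delta$ and $A_\Gamma = A_\Lambda \times A_\Delta$. Moreover, $Z(A_\Delta) = \{1\}$: otherwise the clique $\Lambda'$ with $Z(A_\Delta) = A_{\Lambda'}$ would be a join factor of $\Delta$, so $\Lambda \cup \Lambda'$ would be a larger clique join factor of $\Gamma$, contradicting maximality.

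Pro-$p$ completion commutes with finite direct products. Indeed, the finite $p$-quotients of $A_\Lambda \times A_\Delta$ are cofinally dominated by products of finite $p$-quotients of the factors, because any normal subgroup $N$ of $p$-power index contains $(N\cap A_\Lambda)\times(N\cap A_\Delta)$. Hence $\mathbf A_\Gamma \cong \mathbf A_\Lambda \times \mathbf A_\Delta$, and this is compatible with the embeddings $\mathbf A_\Lambda, \mathbf A_\Delta \leqslant \mathbf A_\Gamma$ noted just before the lemma. Since $\Lambda$ is a clique, $A_\Lambda \cong \Z^k$, so $\mathbf A_\Lambda \cong \Z_p^k$ is abelian.

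For a direct product we have $Z(\mathbf A_\Lambda \times \mathbf A_\Delta) = Z(\mathbf A_\Lambda)\times Z(\mathbf A_\Delta) = \mathbf A_\Lambda \times Z(\mathbf A_\Delta)$. Since $Z(A_\Delta)=\{1\}$, \cref{cor:trivial_centres}\ref{item:centre_prop} gives $Z(\mathbf A_\Delta)=\{1\}$, and therefore $Z(\mathbf A_\Gamma) = \mathbf A_\Lambda$.

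The only step needing care is the maximality argument showing that $A_\Delta$ is centreless, together with the degenerate cases. If $\Delta$ is empty, then $\mathbf A_\Gamma = \mathbf A_\Lambda$ is abelian and the statement is trivial. If $\Lambda$ is empty, the statement is exactly \cref{cor:trivial_centres}. A single vertex of $\Delta$ adjacent to everything in $\Delta$ would itself be a join factor, which is excluded by maximality, so the argument is uniform across cases.
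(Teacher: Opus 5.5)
Your proof is correct and follows the paper's argument: you split off the central clique $\Lambda$ and apply \cref{cor:trivial_centres}\ref{item:centre_prop} to the centreless complement. The differences are cosmetic. You use the splitting $\mathbf A_\Gamma \cong \mathbf A_\Lambda \times \mathbf A_\Delta$, and check that pro-$p$ completion commutes with finite products, whereas the paper uses the exact sequence $1 \to \mathbf A_\Lambda \to \mathbf A_\Gamma \to \mathbf A_\Upsilon \to 1$ and notes that the centre maps into the trivial centre of the quotient; you also justify that $A_\Delta$ is centreless, which the paper only asserts.
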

\begin{proof}
    It is clear that $\mathbf A_\Lambda \leqslant Z(\mathbf A_\Gamma)$ since the centre is a closed subgroup containing $A_\Lambda$, so we focus on the reverse inclusion. Let $\Upsilon = \Gamma \smallsetminus \Lambda$ be the full subgraph of $\Gamma$ on the vertices $V(\Gamma) \smallsetminus V(\Lambda)$. Then $A_\Gamma/Z(A_\Lambda) \cong A_{\Upsilon}$.

    There is a short exact sequence $1 \to A_\Lambda \to A_\Gamma \to A_{\Upsilon} \to 1$, where $A_{\Upsilon}$ is a centreless RAAG. Taking pro-$p$ completions, we have
    \[
        1 \to \mathbf A_\Lambda \to \mathbf A_\Gamma \to \mathbf A_\Upsilon \to 1,
    \]
    where we have used the fact that the closure of $A_\Lambda$ in $\mathbf A_\Gamma$ is isomorphic to $\mathbf A_\Lambda$ (see the remarks preceding the lemma). By \cref{cor:trivial_centres}, $Z(\mathbf A_\Upsilon) = \{1\}$, which implies that $Z(\mathbf A_\Gamma) = \mathbf A_\Lambda$. \qedhere
\end{proof}

We continue with the notation of the previous lemma: let $\Gamma$ be a finite simplicial graph, let $\Lambda \leqslant \Gamma$ be the full clique subgraph such that $Z(\mathbf A_\Gamma) = \mathbf A_\Lambda$, and let $\Upsilon = \Gamma \smallsetminus \Lambda$. Every automorphism $\varphi \in \Aut(\mathbf A_\Gamma)$ induces an automorphism of $\mathbf A_\Lambda$ and therefore of $\mathbf A_\Upsilon = \mathbf A_\Gamma/\mathbf A_\Lambda$, since the centre is characteristic. There is thus a well defined homomorphism
\[
    \pi \colon \Out(\mathbf A_\Gamma) \to \Out(\mathbf A_\Upsilon),
\]
since an inner automorphism of $\mathbf A_\Gamma$ induces an inner automorphism of $\mathbf A_\Upsilon$.

We now prove a pro-$p$ analogue of a trick of Charney and Vogtmann \cite[Proposition 4.4]{CharneyVogtmann2009} which allows us to upgrade \cref{cor:prop_RTFN_torelli} to all pro-$p$ RAAGs.

\begin{thm}\label{thm:prop_RTFN_Torelli}
    The homomorphism $\pi$ restricts to an isomorphism
    \[
        \pi|_{\mathcal T^{(n)}(\mathbf A_\Gamma)} \colon \mathcal T^{(n)}(\mathbf A_\Gamma) \to \mathcal T^{(n)}(\mathbf A_\Upsilon)
    \]
    for each $n \in \N$. In particular, the Andreadakis--Johnson filtration
    \[
        \mathcal T(\mathbf A_\Gamma) = \mathcal T^{(1)}(\mathbf A_\Gamma) \geqslant \mathcal T^{(2)}(\mathbf A_\Gamma) \geqslant \dots
    \]
    is residual and every quotient $\mathcal T^{(n)}(\mathbf A_\Gamma)/\mathcal T^{(n+1)}(\mathbf A_\Gamma)$ is a finitely generated free $\Z_p$-module.
\end{thm}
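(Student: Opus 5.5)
We will prove that $\pi|_{\mathcal T^{(n)}(\mathbf A_\Gamma)}$ lands in $\mathcal T^{(n)}(\mathbf A_\Upsilon)$, is injective, and is surjective. The "in particular" clause then follows from \cref{cor:prop_RTFN_torelli} applied to the centreless pro-$p$ RAAG $\mathbf A_\Upsilon$; here $Z(\mathbf A_\Upsilon)=\{1\}$ by \cref{cor:trivial_centres}.

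Throughout we use the decomposition $\mathbf A_\Gamma = \mathbf A_\Lambda \times \mathbf A_\Upsilon$. Since $\Gamma$ is the join of $\Lambda$ and $\Upsilon$, this already holds for the discrete groups, and pro-$p$ completion commutes with finite direct products. Here $\mathbf A_\Lambda \cong \Z_p^m$ is the centre. Then $\gamma_n(\mathbf A_\Gamma) = \gamma_n(\mathbf A_\Upsilon)$ for $n\geqslant 2$, so
\[
\mathbf A_\Gamma/\gamma_{n+1}(\mathbf A_\Gamma) \cong \mathbf A_\Lambda \times \mathbf A_\Upsilon/\gamma_{n+1}(\mathbf A_\Upsilon).
\]
Because $\mathbf A_\Lambda$ is the centre, the projection to $\mathbf A_\Upsilon/\gamma_{n+1}$ is compatible with automorphisms. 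Hence if $\varphi$ acts on $\mathbf A_\Gamma/\gamma_{n+1}(\mathbf A_\Gamma)$ as an inner automorphism, its image acts on $\mathbf A_\Upsilon/\gamma_{n+1}(\mathbf A_\Upsilon)$ as an inner automorphism. This gives the inclusion $\pi(\mathcal T^{(n)}(\mathbf A_\Gamma))\subseteq \mathcal T^{(n)}(\mathbf A_\Upsilon)$.

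\textbf{Injectivity.} Let $\varphi\in T^{(n)}$ represent a class in $\mathcal T^{(n)}(\mathbf A_\Gamma)$ with trivial image. After composing with an inner automorphism, we may assume $\varphi$ induces the identity on $\mathbf A_\Upsilon$. Note that inner automorphisms of $\mathbf A_\Upsilon$ lift to inner automorphisms of $\mathbf A_\Gamma$, and this lift preserves the Torelli condition up to an inner factor, which suffices since we work in $\Out$. Then:
\begin{itemize}
\item For $u\in\mathbf A_\Upsilon$ we have $\varphi(u)=u\,\delta(u)$ with $\delta(u)\in\mathbf A_\Lambda$, and $\delta\colon\mathbf A_\Upsilon\to\mathbf A_\Lambda$ is a continuous homomorphism because $\mathbf A_\Lambda$ is central.
\item On $\mathbf A_\Lambda$, $\varphi$ restricts to some $\psi\in\Aut(\mathbf A_\Lambda)=\GL_m(\Z_p)$.
\end{itemize}
Since $n\geqslant1$, $\varphi$ is trivial on the abelianisation up to an inner automorphism, and inner automorphisms act trivially there. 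The abelianisation is $\mathbf A_\Lambda\times\mathbf A_\Upsilon^{ab}$, so $\psi=\mathrm{id}$ and $\delta$ vanishes on abelianisation. But $\delta$ factors through $\mathbf A_\Upsilon^{ab}$ because its target is abelian. Hence $\delta=0$ and $\varphi=\mathrm{id}$, proving injectivity.

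\textbf{Surjectivity.} Given $\theta\in T^{(n)}(\mathbf A_\Upsilon)$, possibly after composing with an inner automorphism, extend it to $\mathrm{id}_{\mathbf A_\Lambda}\times\theta$ on $\mathbf A_\Gamma$. This extension acts on $\mathbf A_\Gamma/\gamma_{n+1}(\mathbf A_\Gamma)\cong\mathbf A_\Lambda\times \mathbf A_\Upsilon/\gamma_{n+1}(\mathbf A_\Upsilon)$ as $\mathrm{id}\times(\text{inner})$, which is inner. So it lies in $\mathcal T^{(n)}(\mathbf A_\Gamma)$ and maps to the class of $\theta$.

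The main obstacle is the bookkeeping between $\Aut$ and $\Out$ in the injectivity step. A class in $\mathcal T^{(n)}(\mathbf A_\Gamma)$ need not have a representative in $T^{(n)}(\mathbf A_\Gamma)$; we only know it acts as an inner automorphism modulo $\gamma_{n+1}$. Likewise, triviality in $\Out(\mathbf A_\Upsilon)$ only means the induced map is conjugation by some $w\in\mathbf A_\Upsilon$. The fix is to compose first with conjugation by a lift of $w$, and to check that abelianisation data is unaffected by inner automorphisms. Both the $\Gamma$-level and $\Upsilon$-level statements hold up to an inner factor, so this is harmless, and it is the point that needs the most care.
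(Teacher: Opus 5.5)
Your proposal is correct and follows essentially the same route as the paper. For injectivity you normalise $\varphi$ by an inner automorphism and kill the central correction terms using that $\mathbf A_\Lambda$ injects into the abelianisation; for surjectivity you use the lift fixing $\mathbf A_\Lambda$, where the splitting $\mathbf A_\Gamma = \mathbf A_\Lambda \times \mathbf A_\Upsilon$ makes bijectivity immediate instead of the paper's diagram check. (Your worry that a class in $\mathcal T^{(n)}(\mathbf A_\Gamma)$ might have no representative in $T^{(n)}(\mathbf A_\Gamma)$ is unfounded, since inner automorphisms of $\mathbf A_\Gamma/\gamma_{n+1}(\mathbf A_\Gamma)$ lift to inner automorphisms of $\mathbf A_\Gamma$, but this does not affect the argument.)
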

\begin{proof}
    It is not difficult to verify that $\pi$ maps $\mathcal T^{(n)}(\mathbf A_\Gamma)$ into $\mathcal T^{(n)}(\mathbf A_\Upsilon)$ for each $n \in \N$, so the restricted maps in the statement of the theorem are well defined. We begin by showing that $\pi$ is injective on $\mathcal T(\mathbf A_\Gamma)$. Let $\varphi \in \Aut(\mathbf A_\Gamma)$ be an automorphism such that $[\varphi] \in \ker(\pi) \cap \mathcal T(\mathbf A_\Gamma)$. Since $\varphi$ is inner on $\mathbf A_\Upsilon$, there is an element $g \in \mathbf A_\Gamma$ such that for every $v \in V(\Gamma)$ we have $\varphi(v) = g\inv s_v g z_v$ for some $z_v \in \mathbf A_\Lambda$. Hence, on the Abelianisation $\mathbf A_\Gamma/\gamma_2(\mathbf A_\Gamma)$ we have that $\varphi$ sends $\overline{s_v}$ to $\overline{s_v} + \overline{z_v}$, where $\overline{s_v}$ and $\overline{z_v}$ denote the images of $s_v$ and $z_v$ in $\mathbf A_\Gamma/\gamma_2(\mathbf A_\Gamma)$. Since we have assumed that $\varphi$ is trivial on the Abelianisation, we conclude that $\overline{z_v} = 0$. But the Abelianisation of $\mathbf A_\Gamma$ is just $\Z_p^{|V(\Gamma)|}$, so $\mathbf A_\Lambda$ injects into $\mathbf A_\Gamma/\gamma_2(\mathbf A_\Gamma)$, which implies that $z_v = 1$ for each $v$. Hence, $\varphi$ is the inner automorphism given by conjugation by $g$, which shows that $\pi$ is injective on $\mathcal T(\mathbf A_\Gamma)$ (and hence that each restriction to $\mathcal T^{(n)}(\mathbf A_\Gamma)$ is injective).

    Next we check surjectivity. Fix $n \in \N$ and let $\varphi \in \Aut(\mathbf A_\Upsilon)$ be such that $[\varphi] \in \mathcal T^{(n)}(\mathbf A_\Upsilon)$. We can define a lift $\widehat{\varphi} \colon \mathbf A_\Gamma \to \mathbf A_\Gamma$ of $\varphi$ by setting 
    \[
        \widehat{\varphi}(s_v) = \begin{cases}
            \varphi(s_v) & \text{if} \ v \in \Upsilon \\
            s_v & \text{if} \ v \in \Lambda,
        \end{cases}
    \]
    which is well defined because the generators $s_v$ for $v \in \Lambda$ are central. We check that $\widehat{\varphi}$ is indeed an automorphism. Suppose that $g \in \mathbf A_\Gamma$ is such that $\widehat{\varphi}(g) = 1$. The commutative square
    \[
        \begin{tikzcd}
            \mathbf A_\Gamma \arrow[r, "\widehat{\varphi}"] \arrow[d, two heads] & \mathbf A_\Gamma \arrow[d, two heads] \\
            \mathbf A_\Upsilon \arrow[r, "\varphi"] & \mathbf A_\Upsilon
        \end{tikzcd}
    \]
    shows that $\varphi(\overline{g}) = 1$, where $\overline{g}$ is the image of $g$ in $\mathbf A_\Upsilon$. Since $\varphi$ is an isomorphism, we must have $\overline g = 1$, so $g \in Z(\mathbf A_\Gamma) = \mathbf A_\Lambda$. But $\widehat{\varphi}$ fixes $\mathbf A_\Lambda$, so $g = 1$, proving that $\widehat{\varphi}$ is injective. Now suppose that $g \in \mathbf A_\Gamma$ is arbitrary. From the commutative square, we see there is some $h \in \mathbf A_\Gamma$ such that $\widehat\varphi(h)g\inv = z \in \mathbf A_\Lambda$. But then we have
    \[
        \widehat\varphi(z\inv h) = \widehat\varphi(z\inv)\widehat\varphi(h) = z\inv \cdot z g = g,
    \]
    which proves surjectivity. Hence, we have constructed the desired outer class $[\widehat\varphi] \in \mathcal T^{(n)}(\mathbf A_\Gamma)$ which maps to $[\varphi]$ under $\pi$. This conclude the proof that the restrictions of $\pi$ are all isomorphisms.
    
    The rest of the theorem now follows immediately from \cref{cor:prop_RTFN_torelli}. \qedhere
\end{proof}

\section{The Atiyah and Zero Divisor Conjectures}\label{sec:main_results}

\subsection{Automorphism groups}\label{subsec:aut}

We begin by showing that automorphism groups of certain residually torsion-free nilpotent groups satisfy the Weak Atiyah Conjecture over $\C$ and for each prime $p$ admit a finite-index subgroup whose group algebras over fields of characteristic $p$ or zero do not have any zero divisors. We say that $G$ is a \emph{Magnus group} if $G$ is residually nilpotent and $G/\gamma_n(G)$ is torsion-free for all $n \in \N$. Important examples of Magnus groups include free groups, parafree groups, fundamental groups of orientable surfaces, and right-angled Artin groups. Moreover, the pro-$p$ completion of a finitely generated Magnus group is again a Magnus group.

\begin{thm}\label{thm:aut_atiyah}
    If $\mathbf G$ is a finitely generated pro-$p$ Magnus group for some prime $p$, then
    \begin{enumerate}
        \item\label{item:aut_atiyah_prop} $T_p(\mathbf G)$ satisfies the Strong Atiyah Conjecture over $\C$;
        \item\label{item:aut_weak_atiyah} $\Aut(\mathbf G)$ satisfies the Weak Atiyah Conjecture over $\C$;
        \item\label{item:aut_div_ring} the completed group algebra $\K\llbracket  T_p(\mathbf G)\rrbracket$ embeds into a division ring for any field $\K$ of characteristic $p$.
    \end{enumerate}
\end{thm}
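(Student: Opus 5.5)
We filter $T_p(\mathbf G)$ by the Andreadakis--Johnson filtration. Set $N_n := T_p(\mathbf G) \cap T^{(n)}(\mathbf G)$, where $T^{(n)}(\mathbf G) = \ker(\Aut(\mathbf G) \to \Aut(\mathbf G/\gamma_{n+1}(\mathbf G)))$. These subgroups are normal in $T_p(\mathbf G)$. Since $\mathbf G$ is residually nilpotent, the filtration is residual: an automorphism acting trivially on every $\mathbf G/\gamma_{n+1}(\mathbf G)$ is trivial, because $\bigcap_n \gamma_n(\mathbf G) = \{1\}$. The quotient $T_p(\mathbf G)/N_n$ embeds into $\Aut(\mathbf G/\gamma_{n+1}(\mathbf G))$, and we show it is a torsion-free compact $p$-adic analytic group.

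\emph{Compact $p$-adic analytic.} The group $\mathbf G/\gamma_{n+1}(\mathbf G)$ is a finitely generated nilpotent pro-$p$ group. Because $\mathbf G$ is Magnus, it is torsion-free, hence uniform-like and $p$-adic analytic. Its continuous automorphism group is therefore $p$-adic analytic by \cite{DixonEtAl_analyticPropBook}. Alternatively, $\Aut(\mathbf G/\gamma_{n+1})$ maps to $\GL_d(\Z_p)$ via the abelianisation, and its kernel is an iterated extension of groups of the form $\mathrm{Hom}(\Z_p^d, \gamma_k/\gamma_{k+1})$, each a finitely generated $\Z_p$-module. By \cref{lem:p_adic_extension} it is $p$-adic analytic, and the closed subgroup $T_p(\mathbf G)/N_n$ is as well.

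\emph{Torsion-free.} The image of $T_p(\mathbf G)/N_n$ in $\GL_d(\Z_p)$, where $d$ is the rank of $\mathbf G^{ab} \cong \Z_p^d$ (free since $\mathbf G/\gamma_2$ is torsion-free), lies in the congruence subgroup $\ker(\GL_d(\Z_p)\to \GL_d(\Z/p^\varepsilon))$, which is torsion-free. The kernel of this map, namely $(T_p(\mathbf G)\cap T^{(1)})/N_n$, has a filtration with successive quotients $(T^{(k)}\cap T_p)/(T^{(k+1)}\cap T_p)$. Each of these embeds into $\mathrm{Hom}(\mathbf G^{ab}, \gamma_{k+1}(\mathbf G)/\gamma_{k+2}(\mathbf G))$ via $\varphi \mapsto (g \mapsto g^{-1}\varphi(g))$, and this target is torsion-free because $\mathbf G$ is Magnus. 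An extension of torsion-free groups is torsion-free, so $T_p(\mathbf G)/N_n$ is torsion-free.

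With this in hand, \cref{thm:p-adic-atiyah} gives the Strong Atiyah Conjecture for each quotient, and these quotients are residually finite and hence sofic. Let $H \leqslant T_p(\mathbf G)$ be finitely generated. Then $H/(H\cap N_n)$ embeds into $T_p/N_n$, and \cref{cor:approx_Atiyah} (with \cref{lem:SAC_subgroups}) yields item \ref{item:aut_atiyah_prop}.

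For item \ref{item:aut_weak_atiyah}, note that $T_p(\mathbf G)$ has finite index in $\Aut(\mathbf G)$, since the quotient embeds into $\GL_d(\Z/p^\varepsilon)$. By item \ref{item:aut_atiyah_prop} and torsion-freeness, every finitely generated subgroup of $T_p(\mathbf G)$ has integer ranks. Given a finitely generated $H\leqslant\Aut(\mathbf G)$, the subgroup $H\cap T_p$ has index at most $m=|\GL_d(\Z/p^\varepsilon)|$ in $H$ and is finitely generated. Applying \cref{lem:weak_atiyah_commensurability} (via $\rk_{\mathcal N(H)} = \frac{1}{[H:H\cap T_p]}\rk_{\mathcal N(H\cap T_p)}$), the ranks of $H$ lie in $\frac{1}{m!}\Z$, uniformly in $H$. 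For countable subgroups one uses that ranks are computed in the subgroup generated by the matrix entries.

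For item \ref{item:aut_div_ring}, the same residual chain shows $T_p(\mathbf G) = \varprojlim T_p(\mathbf G)/N_n$. Here I need closedness and compactness of $T_p(\mathbf G)$ in $\Aut(\mathbf G)$, which is profinite for finitely generated $\mathbf G$. Then $\K\llbracket T_p(\mathbf G)\rrbracket = \varprojlim_n \K\llbracket T_p/N_n\rrbracket$, and each term is a Noetherian domain by \cref{thm:p_adic_completed_domain}. A nonzero element $x$ has nonzero image $x_n$ in some $\K\llbracket T_p/N_n\rrbracket$, and hence in all later ones. An ultraproduct argument as in \cref{lem:approx_div_rings}, applied to the maps into the Ore division rings $\mathcal D_n$, then embeds $\K\llbracket T_p\rrbracket$ into $\prod_\omega \mathcal D_n$.

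The main obstacle is the torsion-freeness and $p$-adic analyticity of the quotients, specifically the step embedding the graded quotients of $T_p$ into torsion-free Hom-groups using the Magnus property.
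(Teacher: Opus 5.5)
Your proposal is correct and follows essentially the same route as the paper. You filter $T_p(\mathbf G)$ by the Andreadakis--Johnson filtration and show each $T_p(\mathbf G)/T^{(n)}(\mathbf G)$ is torsion-free compact $p$-adic analytic, as an extension of a congruence-subgroup image in $\GL_d(\Z_p)$ by an iterated extension of free $\Z_p$-modules obtained from the Johnson-type embedding into $\gamma_{k+1}/\gamma_{k+2}$-valued maps. You then get (1) from Farkas--Linnell plus Jaikin-Zapirain's approximation, (2) from the finite-index lemma, and (3) from an ultraproduct of the Ore division rings of $\K\llbracket T_p(\mathbf G)/T^{(n)}(\mathbf G)\rrbracket$. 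Your treatment of (2) for the uncountable group $\Aut(\mathbf G)$ (a uniform denominator over finitely generated subgroups) and your identification $\K\llbracket T_p(\mathbf G)\rrbracket=\varprojlim_n\K\llbracket T_p(\mathbf G)/N_n\rrbracket$ (which implicitly relies on the compactness fact of \cref{lem:contained_in_open}) are more explicit than the paper, but they are not a different strategy.
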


\begin{rem}
    Since $\K[T_p(\mathbf G)]$ embeds into $\K\llbracket T_p(\mathbf G)\rrbracket$, item \ref{item:aut_div_ring} implies that the Zero Divisor Conjecture holds for $\K[T_p(\mathbf G)]$, where $\K$ is a field of characteristic $p$.
\end{rem}

To prove item \ref{item:aut_div_ring}, we need an auxiliary lemma. It is likely well known, but we could not find a reference.

\begin{lem}\label{lem:contained_in_open}
    Let $G$ be a compact topological group, $U$ be an open subgroup of $G$, and, for $n \geqslant 1$, $H_n$ a nested sequence of closed subgroups such that $\bigcap_{n=1}^{\infty} H_n= \{1\}$. For sufficiently large $n$, we have $H_n \subset U$.
\end{lem}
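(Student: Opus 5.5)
The plan is a compactness argument via the finite intersection property. Suppose for contradiction that $H_n \not\subseteq U$ for every $n$. Since the $H_n$ are nested, it suffices to rule this out for infinitely many $n$. Consider the sets $K_n := H_n \smallsetminus U = H_n \cap (G \smallsetminus U)$.

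First, $U$ is an open subgroup, hence also closed. Its complement is a union of cosets $gU$, each of which is open. So $G \smallsetminus U$ is closed, and each $K_n$ is closed in the compact space $G$, hence compact.

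Second, the $K_n$ are nested because the $H_n$ are, and by assumption each $K_n$ is non-empty. By the finite intersection property in the compact space $G$, the intersection $\bigcap_n K_n$ is non-empty. However,
\[
\bigcap_n K_n = \Big(\bigcap_n H_n\Big) \smallsetminus U = \{1\} \smallsetminus U = \emptyset,
\]
since $1 \in U$. This contradiction shows $H_n \subseteq U$ for some $n$, and therefore for all larger $n$ by nestedness.

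There is no real obstacle here. The only point needing care is that open subgroups are closed, which is what makes $K_n$ compact. Hausdorffness of $G$ is not even needed, since closed subsets of compact spaces are compact regardless.
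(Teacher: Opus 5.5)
Your proof is correct. It differs from the paper's only in how compactness is invoked, but that difference matters. The paper picks $g_n \in H_n \smallsetminus U$, extracts a convergent subsequence, and observes that the limit lies both in $\bigcap_n H_n = \{1\}$ and in the closed set $G \smallsetminus U$. Extracting a convergent subsequence uses sequential compactness, which a general compact group need not have; $(\Z/2)^{\R}$ is an example. This is harmless in the paper's application, where the ambient group $T_p(\mathbf G)$ is a countably based profinite group and hence metrizable. Your finite intersection property argument, however, proves the lemma exactly as stated, with no countability or Hausdorff hypothesis.

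One correction: the closedness of $G \smallsetminus U$ is immediate from $U$ being open. Your coset argument instead shows that $G \smallsetminus U$ is \emph{open}, i.e.\ that $U$ is closed. That is true but not what you use, so ``open subgroups are closed'' is not the point needing care. In fact no group structure is needed at all: the same argument works for any open neighbourhood $U$ of $1$ and any decreasing sequence of closed subsets $H_n$ with $\bigcap_n H_n = \{1\}$.
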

\begin{proof}
    If the conclusion of the lemma is false, then there is a sequence of elements $(g_n)_{n \in \N}$ such that $g_n \in H_n \smallsetminus U$ for all $n \in \N$. Since $G$ is compact, $(g_n)$ has a convergent subsequence whose limit we denote by $g$. The nested subgroups $H_n$ intersect in the identity, forcing $g = 1$. On the other hand, every element $g_n$ lies in the closed set $G \smallsetminus U$ and therefore so does $g$. This forces $g \neq 1$, a contradiction.\qedhere
\end{proof}

\begin{proof}[Proof (of \cref{thm:aut_atiyah})]
    By work of Andreadakis, the terms 
    \[
        T(\mathbf G) = T^{(1)}(\mathbf G) \geqslant T^{(2)}(\mathbf G) \geqslant \dots
    \]
    of the Andreadakis--Johnson filtration of $T(\mathbf G)$ form a central series intersecting in the identity  \cite[Theorems 1.1 and 1.2] {Andreadakis1965}. The following claim is the natural pro-$p$ analogue of \cite[Corollary 2]{Asada1995}, and its proof is similar.

    \begin{claim}\label{claim:andreadakis_johnson_torsionfree}
        The consecutive quotient $T^{(n)}(\mathbf G)/T^{(n+1)}(\mathbf G)$ is a finitely generated free $\Z_p$-module for each $n \in \N$.
    \end{claim}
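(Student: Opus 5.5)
The plan is to embed each consecutive quotient into a finitely generated free $\Z_p$-module via a Johnson-type homomorphism. Since a finitely generated $\Z_p$-module is free if and only if it is torsion-free ($\Z_p$ is a PID), and a closed $\Z_p$-submodule of a finitely generated free $\Z_p$-module is again finitely generated free, this suffices.

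\textbf{Step 1: set up the graded pieces.} Write $L_k := \gamma_k(\mathbf G)/\gamma_{k+1}(\mathbf G)$. Since $\mathbf G$ is a finitely generated pro-$p$ group, each $L_k$ is a finitely generated abelian pro-$p$ group, hence a finitely generated $\Z_p$-module. The Magnus hypothesis says $\mathbf G/\gamma_{k+1}(\mathbf G)$ is torsion-free. Therefore $L_k$, which embeds into it, is torsion-free and thus a finitely generated free $\Z_p$-module, say $L_k \cong \Z_p^{r_k}$.

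\textbf{Step 2: define the Johnson homomorphism.} For $\varphi \in T^{(n)}(\mathbf G)$, the automorphism $\varphi$ acts trivially on $\mathbf G/\gamma_{n+1}(\mathbf G)$. Hence $g\inv\varphi(g) \in \gamma_{n+1}(\mathbf G)$ for all $g$. Define
\[
    \tau_n(\varphi) \colon L_1 \to L_{n+1}, \qquad g\gamma_2(\mathbf G) \mapsto g\inv\varphi(g)\gamma_{n+2}(\mathbf G).
\]
The standard Andreadakis computations show the following:
\begin{itemize}
    \item this map is well defined, i.e.\ it kills $\gamma_2(\mathbf G)$, because $\varphi$ moves elements of $\gamma_2$ only into $\gamma_{n+2}$;
    \item it is additive, hence $\Z$-linear and by continuity $\Z_p$-linear;
    \item $\varphi \mapsto \tau_n(\varphi)$ is a group homomorphism $T^{(n)}(\mathbf G) \to \mathrm{Hom}_{\Z_p}(L_1,L_{n+1})$;
    \item its kernel is exactly $T^{(n+1)}(\mathbf G)$, by definition.
\end{itemize}
These are the same identities used in \cite{Andreadakis1965}, now applied to closures; they pass to the pro-$p$ setting because $\varphi$ is continuous and the $\gamma_k(\mathbf G)$ are closed.

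\textbf{Step 3: conclude.} We obtain an injection
\[
    T^{(n)}(\mathbf G)/T^{(n+1)}(\mathbf G) \hookrightarrow \mathrm{Hom}_{\Z_p}(L_1,L_{n+1}) \cong \Z_p^{r_1r_{n+1}},
\]
so the quotient is a torsion-free abelian group. The delicate point, which I expect to be the main obstacle, is to upgrade this abstract subgroup to a finitely generated $\Z_p$-module. A priori $T^{(n)}(\mathbf G)$ is only an abstract group, and $\Aut(\mathbf G)$ carries no specified topology.

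\textbf{Handling the obstacle.} I would first give $\Aut(\mathbf G)$ its natural profinite topology: the kernels of the maps to $\Aut(\mathbf G/\Phi^k)$ for a characteristic open series $\Phi^k$ form a base of neighbourhoods, and for finitely generated pro-$p$ $\mathbf G$ this makes $\Aut(\mathbf G)$ profinite. The subgroups $T^{(n)}(\mathbf G)$ are then closed, being kernels of continuous maps to $\Aut(\mathbf G/\gamma_{n+1}(\mathbf G))$, which is a $p$-adic analytic group. The map $\tau_n$ is continuous, so the image of $T^{(n)}(\mathbf G)$ is a compact, hence closed, subgroup of $\Z_p^{r_1r_{n+1}}$.

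A closed subgroup of $\Z_p^m$ is a $\Z_p$-submodule, since it is closed under the $\Z$-action and therefore, by continuity, under the $\Z_p$-action. Such a submodule is free of rank at most $m$. As a continuous bijection from a compact space to a Hausdorff space, $\tau_n$ induces a topological isomorphism from the quotient onto its image. This gives the claim.
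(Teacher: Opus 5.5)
Your proposal is correct and follows essentially the paper's route: the paper also embeds $T^{(n)}(\mathbf G)/T^{(n+1)}(\mathbf G)$ into $\bigoplus_{i=1}^m \gamma_{n+1}(\mathbf G)/\gamma_{n+2}(\mathbf G)$ via $\varphi \mapsto (\varphi(x_i)x_i\inv\gamma_{n+2}(\mathbf G))_i$ for a topological generating set $x_1,\dots,x_m$, which is your $\mathrm{Hom}_{\Z_p}(L_1,L_{n+1})$ written in coordinates. The paper concludes freeness directly from that injection. Your compactness argument (the profinite topology on $\Aut(\mathbf G)$, closedness of $T^{(n)}(\mathbf G)$, and continuity of $\tau_n$, so that the image is closed and hence a $\Z_p$-submodule) supplies a step the paper leaves implicit, since an abstract subgroup of $\Z_p^k$ such as $\Z_{(p)} \leqslant \Z_p$ need not be a finitely generated $\Z_p$-module.
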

    \begin{proof}
        Let $\{x_1, \dots, x_m\}$ be a finite (topological) generating set for $\mathbf G$. For each $\varphi \in \Aut(\mathbf G)$, let $s_i(\varphi) := \varphi(x_i) x_i\inv$. For each $n \in \N$, we claim that the map
        \[
            \Psi_n \colon T^{(n)}(\mathbf G)/T^{(n+1)}(\mathbf G) \to \bigoplus_{i = 1}^m \gamma_{n+1}(\mathbf G)/\gamma_{n+2}(\mathbf G)
        \]
        defined by $\Psi_n(\varphi T^{(n+1)}(\mathbf G)) = (s_i(\varphi)\gamma_{n+2}(\mathbf G))_{1 \leqslant i \leqslant m}$ is a well defined injective homomorphism.

        Indeed, if $\varphi \in T^{(n+1)}(\mathbf G)$, then $\varphi(x_i) = x_i c_i$ for some term $c_i \in \gamma_{n+2}(\mathbf G)$, by definition. This implies that $\Psi_n$ maps $T^{(n+1)}$ to $\bigoplus_{i=1}^m \gamma_{m+2}(\mathbf G)$ (for all $n \in \N)$, and therefore that $\Psi_n$ is well defined. Next, if $\varphi \in T^{(n)}(\mathbf G) \smallsetminus T^{(n+1)}(\mathbf G)$, then there is some generator $x_i$ for which $\varphi(x_i) \neq x_i$ modulo $\gamma_{n+2}(\mathbf G)$, or equivalently such that $s_i(\varphi) \notin \gamma_{n+2}(\mathbf G)$. This shows that $\Psi_n$ is injective. Finally, we check that $\Psi_n$ is a homomorphism. By the identity
        \begin{align*}
            s_i(\varphi \psi) &= \varphi(\psi(x_i))x_i\inv = \varphi(\psi(x_i)x_i\inv x_i)x_i\inv \\
            &= \varphi(\psi(x_i)x_i\inv) \varphi(x_i)x_i\inv = \varphi(s_i(\psi)) s_i(\varphi),
        \end{align*}
        it suffices to show that $\varphi \in T^{(n)}(\mathbf G)$ acts trivially on $\gamma_{n+1}(\mathbf G)/\gamma_{n+2}(\mathbf G)$. Let $[x,y] \in \gamma_{n+1}(\mathbf G)$, where $x \in \mathbf G$ and $y \in \gamma_n(\mathbf G)$. Then 
        \[
            \varphi([x,y]) = [\varphi(x), \varphi(y)] = [xa, yb]
        \]
        for some $a,b \in \gamma_{n+1}(\mathbf G)$. But
        \[
            [xa, yb] = [x,yb]^a [a,yb] = [x,b]^a [x,y]^{ab} [a,b] [a,y]^b,
        \]
        which equals $[x,y]^{ab}$ reduced modulo $\gamma_{n+2}(\mathbf G)$. But again, 
        \[
            [x,y]^{ab} = [x,y] [[x,y],ab]
        \]
        equals $[x,y]$ modulo $\gamma_{n+2}(\mathbf G)$, proving that $\varphi$ acts trivially on $\gamma_{n+1}(\mathbf G)/\gamma_{n+2}(\mathbf G)$. This concludes the proof that $\Psi_n$ is an injective homomorphism.

        Since $\mathbf G$ is a (topologically) finitely generated Magnus group, $\gamma_{n+1}(\mathbf G)/\gamma_{n+2}(\mathbf G)$ is a finitely generated free $\Z_p$-module. Hence, $T^{(n)}(\mathbf G)/T^{(n+1)}(\mathbf G)$ is also a finitely generated free $\Z_p$-module. \renewcommand\qedsymbol{$\diamond$}\qedhere
    \end{proof}

    Since $\mathbf G$ is a finitely generated Magnus group, there is some $m \in \N$ such that $\mathbf G/\gamma_2(\mathbf G) \cong \Z_p^m$. If $\mathbf Q$ is the image of $T_p(\mathbf G)$ in $\GL_m(\Z_p)$, then for every $n \in \N$ there is a short exact sequence
    \[
        1 \to T(\mathbf G)/T^{(n)}(\mathbf G) \to T_p(\mathbf G)/T^{(n)}(\mathbf G) \to \mathbf Q \to 1.
    \]
    Since $\mathbf Q$ is a closed torsion-free subgroup of $\GL_m(\Z_p)$, it is torsion-free $p$-adic analytic. By \cref{claim:andreadakis_johnson_torsionfree}, $T(\mathbf G)/T^{(n)}(\mathbf G)$ is finitely generated torsion-free nilpotent for every $n \in \N$, and is therefore $p$-adic analytic as well. By \cref{lem:p_adic_extension}, $T_p(\mathbf G)/T^{(n)}(\mathbf G)$ is torsion-free $p$-adic analytic, and therefore satisfies the Strong Atiyah Conjecture over $\C$ by \cref{thm:p-adic-atiyah}. By \cref{cor:approx_Atiyah}, $T_p(\mathbf G)$ satisfies the Strong Atiyah Conjecture over $\C$, proving item \ref{item:aut_atiyah_prop}. Item \ref{item:aut_weak_atiyah} then follows immediately from item \ref{item:aut_atiyah_prop} and  \cref{lem:weak_atiyah_commensurability}.

    By \cref{thm:p_adic_completed_domain}, each completed group algebra $\K\llbracket T_p(\mathbf G)/T^{(n)}(\mathbf G) \rrbracket$ embeds into a division ring $\mathcal D_n$. By \cref{lem:contained_in_open}, for any open subgroup $\mathbf{U}$ there is some $n \in \N$ be such that $T^{(n)}(\mathbf G) \leqslant \mathbf U$. The projection $\K\llbracket T_p(\mathbf G)\rrbracket \to \K[T_p(\mathbf G)/\mathbf U]$ factors through
        \[
            \K\llbracket T_p(\mathbf G)\rrbracket \to \K\llbracket T_p(\mathbf G)/T^{(n)}(\mathbf G)\rrbracket \to \K[T_p(\mathbf G)/\mathbf U].
        \]
    Hence, $x \neq 0$ in $\K\llbracket T_p(\mathbf G)/T^{(n)}(\mathbf G)\rrbracket$. 
    
    This implies that $\K\llbracket T_p(\mathbf G) \rrbracket$ embeds into the ultraproduct $\prod_\omega \mathcal D_n$ for any non-principal ultrafilter $\omega$ on $\N$, proving item \ref{item:aut_div_ring}. \qedhere
\end{proof}

We can use \cref{thm:aut_atiyah} to obtain corresponding results for automorphism groups of finitely generated abstract Magnus groups.

\begin{cor}\label{cor:aut_atiyah}
    If $G$ is a finitely generated Magnus group, then
    \begin{enumerate}
        \item $T_p(G)$ satisfies the Strong Atiyah Conjecture over $\C$;
        \item $\Aut(G)$ satisfies the Weak Atiyah Conjecture over $\C$;
        \item the group algebra $\K[T_p(G)]$ embeds into a division ring for any field $\K$ of characteristic zero or $p$.
    \end{enumerate}
\end{cor}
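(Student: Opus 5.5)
We reduce to \cref{thm:aut_atiyah} applied to the pro-$p$ completion $\mathbf G$ of $G$. First, $\mathbf G$ is a finitely generated pro-$p$ Magnus group: $G/\gamma_n(G)$ is finitely generated torsion-free nilpotent, so it is residually $p$. Hence $G$ embeds in $\mathbf G$, and $\mathbf G/\gamma_n(\mathbf G)$ is the pro-$p$ completion of $G/\gamma_n(G)$, which is torsion-free. Next, every automorphism of $G$ extends uniquely to a continuous automorphism of $\mathbf G$. This gives a homomorphism $\iota\colon \Aut(G)\to\Aut(\mathbf G)$, and $\iota$ is injective because $G$ is dense in $\mathbf G$ and embeds in it.

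We check that $\iota$ maps $T_p(G)$ into $T_p(\mathbf G)$. Since $G$ is dense, $\mathbf G/\gamma_2(\mathbf G)\mathbf G^{p^\varepsilon}$ is a finite quotient onto which $G$ surjects, namely $G/\gamma_2(G)G^{p^\varepsilon}$. So an automorphism acting trivially on $G/\gamma_2(G)G^{p^\varepsilon}$ extends to one acting trivially on the corresponding quotient of $\mathbf G$. Thus $T_p(G)$ is isomorphic to an abstract subgroup of $T_p(\mathbf G)$.

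\emph{Item (1).} $T_p(\mathbf G)$ satisfies the Strong Atiyah Conjecture over $\C$ by \cref{thm:aut_atiyah}, and it is torsion-free. Torsion-freeness follows because the filtration $T_p(\mathbf G)\geqslant T(\mathbf G)/T^{(n)}$ has torsion-free layers: the quotient $\mathbf Q\leqslant \GL_m(\Z_p)$ is torsion-free, being a congruence-type subgroup with $\varepsilon$ chosen appropriately, and the Andreadakis--Johnson layers are free $\Z_p$-modules and the filtration is residual. By the definition for uncountable groups, every finitely generated subgroup of $T_p(\mathbf G)$ satisfies the conjecture. So by \cref{lem:SAC_subgroups}, every finitely generated subgroup of $T_p(G)$ does, which gives (1).

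\emph{Item (2).} $T_p(G)$ has finite index in $\Aut(G)$, as the kernel of the map to the finite group $\Aut(G/\gamma_2(G)G^{p^\varepsilon})$. Also, $T_p(G)$ satisfies the Weak (indeed integer-valued) Atiyah Conjecture. Hence \cref{lem:weak_atiyah_commensurability} gives (2). The one subtlety is countability: $G$ is finitely generated, so $\Aut(G)$ is countable.

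\emph{Item (3), characteristic $p$.} $\K[T_p(G)]\subseteq \K[T_p(\mathbf G)]\subseteq \K\llbracket T_p(\mathbf G)\rrbracket$, and the last ring embeds into a division ring by \cref{thm:aut_atiyah}. The point to verify is that the abstract group algebra $\K[T_p(\mathbf G)]$ injects into the completed one. This holds because $T_p(\mathbf G)$ is profinite (it is closed in $\Aut(\mathbf G)$, which is profinite as $\mathbf G$ is finitely generated). So finitely many distinct elements are separated in some finite quotient $T_p(\mathbf G)/\mathbf U$.

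\emph{Item (3), characteristic zero.} Use \cref{lem:sofic_SAC_all_fields}. This requires $T_p(G)$ to be torsion-free and sofic, which holds since it is residually finite: it embeds in the profinite group $T_p(\mathbf G)$. It also requires (1).

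I expect the main obstacle to be the torsion-freeness of $T_p(\mathbf G)$ needed for \cref{lem:SAC_subgroups}. In particular one must check that the image $\mathbf Q$ in $\GL_m(\Z_p)$ lies in the principal congruence subgroup of level $p^\varepsilon$, which is torsion-free. Everything else is formal.
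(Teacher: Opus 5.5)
Your proposal is correct and follows the paper's own route. The paper likewise shows that $\mathbf G$ is a finitely generated pro-$p$ Magnus group, embeds $T_p(G)$ into $T_p(\mathbf G)$ via $\Aut(G)\hookrightarrow\Aut(\mathbf G)$, and invokes \cref{thm:aut_atiyah} together with \cref{lem:SAC_subgroups,lem:weak_atiyah_commensurability,lem:sofic_SAC_all_fields}. You spell out points the paper leaves implicit: the torsion-freeness of $T_p(\mathbf G)$, via the congruence image in $\GL_m(\Z_p)$ and the torsion-free Andreadakis--Johnson layers, and the injection $\K[T_p(\mathbf G)]\hookrightarrow\K\llbracket T_p(\mathbf G)\rrbracket$. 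The one step you skip, $\bigcap_n\gamma_n(\mathbf G)=\{1\}$, is immediate because every finite $p$-quotient of $\mathbf G$ is nilpotent.
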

\begin{proof}
    Let $\mathbf G$ be the pro-$p$ completion of $G$. If $g \in \mathbf G$ is an arbitrary non-identity element, then there is a finite $p$-quotient $Q$ of $\mathbf G$ in which $g$ survives. Then $Q$ is also a $p$-quotient of $G$, and since $Q$ is nilpotent, there is some $n \in \N$ such that $G \to Q$ factors as $G \to G/\gamma_n(G) \to Q$. But then $g$ survives in the quotient $\mathbf G/\gamma_n(\mathbf G)$ of $\mathbf G$, and $\mathbf G/\gamma_n(\mathbf G)$ is torsion-free nilpotent, being the pro-$p$ completion of a torsion-free nilpotent group. The subgroups $\gamma_n(\mathbf G)$ therefore intersect in the identity and each quotient $\mathbf G/\gamma_n(\mathbf G)$ is torsion-free nilpotent. Hence, $\mathbf G$ is a finitely generated pro-$p$ Magnus group. It is easy to see that $\Aut(G)$ embeds into $\Aut(\mathbf G)$ and $T_p(G)$ embeds into $T_p(\mathbf G)$, so the corollary now follows at once from \cref{lem:SAC_subgroups,lem:sofic_SAC_all_fields} and \cref{thm:aut_atiyah}. \qedhere
\end{proof}

\subsection{Outer automorphism groups}\label{subsec:out}

For outer automorphism groups, the situation is more delicate. For instance, if $\mathbf G$ is the pro-$p$ completion of an abstract group $G$, it is not so clear when the natural map $\Out(G) \to \Out(\mathbf G)$ is injective, even in the case where $G$ is residually $p$. Segal gave examples of finitely generated torsion-free nilpotent groups for which this map is not injective \cite{Segal90}. Moreover, it is more difficult to prove that the Torelli subgroup of $\Out(G)$ and $\Out(\mathbf G)$ are residually torsion-free nilpotent when $G$ is a finitely Magnus group. Fortunately, for many groups of interest, such as surface groups, free groups, and more generally right-angled Artin groups, all of these properties hold and we can prove results analogous to \cref{thm:aut_atiyah} and \cref{cor:aut_atiyah}. The proofs are completely analogous to those of \cref{subsec:aut}, so we will mostly only sketch them.

\begin{thm}\label{thm:out_Atiyah_prop}
    If $\mathbf G$ is the pro-$p$ completion of a surface group or of a finitely generated RAAG, then 
    \begin{enumerate}
        \item\label{item:out_prop_SAC} $\mathcal T_p(\mathbf G)$ satisfies the Strong Atiyah Conjecture over $\C$;
        \item\label{item:out_prop_WAC} $\Out(\mathbf G)$ satisfies the Weak Atiyah Conjecture over $\C$;
        \item\label{item:out_prop_div} the completed group algebra $\K\llbracket \mathbf {\mathcal T_p(\mathbf G)}\rrbracket$ embeds into a division ring for any field $\K$ of characteristic $p$. 
    \end{enumerate}
\end{thm}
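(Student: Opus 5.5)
We mirror the proof of \cref{thm:aut_atiyah}, replacing the Andreadakis--Johnson filtration of $\Aut(\mathbf G)$ by that of $\Out(\mathbf G)$. The first input is that the filtration
\[
    \mathcal T(\mathbf G) = \mathcal T^{(1)}(\mathbf G) \geqslant \mathcal T^{(2)}(\mathbf G) \geqslant \cdots
\]
is residual, with every consecutive quotient a finitely generated free $\Z_p$-module. For pro-$p$ RAAGs this is exactly \cref{thm:prop_RTFN_Torelli}. For the pro-$p$ completion of a surface group we would apply Asada's \cref{thm:Asada} directly. Since $\gr$ of a surface group is the quotient of a free Lie algebra by the single symplectic relator, the three hypotheses (trivial centre of $\gr(\mathbf G)$ and of $\gr(\mathbf G)\otimes\F_p$, and $\Z_p$-freeness of $\gr(\mathbf G)$) are classical; they are the ones Asada verifies in \cite{Asada1995} for surface groups. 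The torus and sphere cases have $\Out$ virtually abelian or trivial and can be treated separately; for instance $\Z^2$ is itself a RAAG.

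Next we produce the $p$-adic analytic approximations. Let $m$ be the rank of $\mathbf G/\gamma_2(\mathbf G)\cong\Z_p^m$ and let $\mathbf Q$ be the image of $\mathcal T_p(\mathbf G)$ in $\GL_m(\Z_p)$. Since $\mathcal T_p(\mathbf G)$ acts trivially modulo $p^\varepsilon$, the group $\mathbf Q$ lies in the principal congruence subgroup. It is therefore a closed torsion-free subgroup, hence torsion-free compact $p$-adic analytic. For each $n$ there is a short exact sequence
\[
    1 \to \mathcal T(\mathbf G)/\mathcal T^{(n)}(\mathbf G) \to \mathcal T_p(\mathbf G)/\mathcal T^{(n)}(\mathbf G) \to \mathbf Q \to 1.
\]
By the first step, the kernel is a finitely generated pro-$p$ group with a finite filtration whose consecutive quotients are free $\Z_p$-modules of finite rank. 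Moreover the filtration is central: the quotients $\mathcal T^{(i)}/\mathcal T^{(i+1)}$ embed into abelian groups, and the commutator pairing goes up in the filtration, as in Andreadakis' argument. The kernel is therefore torsion-free nilpotent and $p$-adic analytic. By \cref{lem:p_adic_extension} the middle group is torsion-free compact $p$-adic analytic; torsion-freeness holds because both ends are torsion-free. We also need these quotients to be Hausdorff, which follows because $\mathcal T^{(n)}(\mathbf G)$ is closed as a kernel of a continuous map to $\Out$ of a finitely generated nilpotent pro-$p$ group.

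Now $(\mathcal T^{(n)}(\mathbf G))_n$ is a residual normal chain in $\mathcal T_p(\mathbf G)$. Each quotient is torsion-free, residually finite and hence sofic, and satisfies the Strong Atiyah Conjecture over $\C$ by \cref{thm:p-adic-atiyah}. So \cref{cor:approx_Atiyah} gives item \ref{item:out_prop_SAC}. Item \ref{item:out_prop_WAC} follows from \cref{lem:weak_atiyah_commensurability} once we know $\mathcal T_p(\mathbf G)$ has finite index in $\Out(\mathbf G)$, which holds because it is the kernel of a map to the finite group $\GL_m(\Z/p^\varepsilon)$. For item \ref{item:out_prop_div} we argue exactly as at the end of \cref{thm:aut_atiyah}. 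By \cref{thm:p_adic_completed_domain}, each $\K\llbracket \mathcal T_p(\mathbf G)/\mathcal T^{(n)}(\mathbf G)\rrbracket$ embeds into a division ring $\mathcal D_n$. We need $\mathcal T_p(\mathbf G)$ to be compact; it is, being a closed subgroup of $\Out(\mathbf G)$, which is profinite for finitely generated pro-$p$ $\mathbf G$. Then \cref{lem:contained_in_open} shows every open normal subgroup contains some $\mathcal T^{(n)}(\mathbf G)$. Hence a nonzero element of the completed algebra survives in some $\K\llbracket \mathcal T_p/\mathcal T^{(n)}\rrbracket$, and we obtain an embedding into $\prod_\omega \mathcal D_n$.

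The main obstacle is the first step for surface groups: checking Asada's hypotheses, in particular the vanishing of the centre of $\gr(\mathbf G)\otimes\F_p$. We would cite Asada and Labute's description of $\gr$ of one-relator (Demushkin-type) pro-$p$ groups rather than reprove it. A secondary subtlety is verifying that the Torelli quotients $\mathcal T/\mathcal T^{(n)}$ are genuinely nilpotent, so that they are $p$-adic analytic. This should follow as in the automorphism case by lifting to $T(\mathbf G)$, whose filtration is central, and pushing forward to $\Out$.
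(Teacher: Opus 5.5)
Your proposal is correct and is essentially the paper's proof. The paper also gets residuality of $(\mathcal T^{(n)}(\mathbf G))_n$, with finitely generated free $\Z_p$-module quotients, from Asada in the surface case and from \cref{thm:prop_RTFN_Torelli} in the RAAG case, and then says the argument of \cref{thm:aut_atiyah} goes through verbatim. Your extra checks (centrality by lifting to $T(\mathbf G)$, closedness of the $\mathcal T^{(n)}(\mathbf G)$, compactness of $\mathcal T_p(\mathbf G)$ for \cref{lem:contained_in_open}) only make explicit what the paper leaves implicit. One harmless slip: $\Out(\Z_p^2)\cong\GL_2(\Z_p)$ is not virtually abelian, but your remark that $\Z^2$ is a RAAG already handles the torus correctly.
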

\begin{proof}
    The proof is similar to that of \cref{thm:aut_atiyah}. The chain
    \[
        \mathcal T(\mathbf G) = \mathcal T^{(1)}(\mathbf G) \geqslant \mathcal T^{(2)}(\mathbf G) \geqslant \dots
    \]
    is residual and the consecutive quotients $\mathcal T^{(n)}(\mathbf G)/\mathcal T^{(n+1)}(\mathbf G)$ are finitely generated free $\Z_p$-modules for all $n \in \N$, by \cite[Theorem 1 (pro-$\ell$ case) and Section 3.2] {Asada1995} in the case $\mathbf G$ is a pro-$p$ surface group and by \cref{thm:prop_RTFN_Torelli} in the case $\mathbf G$ is a pro-$p$ RAAG. The proof now proceeds exactly as in that of \cref{thm:aut_atiyah}. \qedhere
\end{proof}

We can now apply \cref{thm:out_Atiyah_prop} to outer automorphism groups of certain discrete groups.

\begin{cor}\label{cor:out_atiyah}
    If $G$ is either a surface group or a finitely generated right-angled Artin group, then
    \begin{enumerate}
        \item\label{item:Out_SAC} $\mathcal T_p(G)$ satisfies the Strong Atiyah Conjecture over $\C$;
        \item\label{item:Out_WAC} $\Out(G)$ satisfies the Weak Atiyah Conjecture over $\C$;
        \item\label{item:Out_div_ring} the group algebra $\K[\mathcal T_p(G)]$ embeds into a division ring for any field $\K$ of characteristic zero or $p$.
    \end{enumerate}
\end{cor}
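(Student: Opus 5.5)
The plan is to deduce all three items from \cref{thm:out_Atiyah_prop} by embedding $\mathcal T_p(G)$ into $\mathcal T_p(\mathbf G)$ and then passing results down to subgroups, in the same way that \cref{cor:aut_atiyah} was obtained from \cref{thm:aut_atiyah}.

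The first step is to show that the natural map $\Out(G) \to \Out(\mathbf G)$ is injective. Since both $G$ and $\mathbf G$ have the same mod-$p$ Abelianisation $G/\gamma_2(G)G^{p^\varepsilon} \cong \mathbf G/\gamma_2(\mathbf G)\mathbf G^{p^\varepsilon}$, injectivity will then carry $\mathcal T_p(G)$ into $\mathcal T_p(\mathbf G)$.

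Injectivity amounts to the following statement: an automorphism $\varphi$ of $G$ which becomes inner in $\mathbf G$ is already inner in $G$. If $\varphi$ becomes inner in $\mathbf G$, then in particular $\varphi$ acts on every finite $p$-quotient of $G$ as an inner automorphism. In other words, for each $g\in G$ the elements $\varphi(g)$ and $g$ are conjugate in every finite $p$-quotient of $G$.
\begin{itemize}
\item For RAAGs, conjugacy $p$-separability \cite[Theorem 6.15]{Toinet2013} then makes $\varphi$ pointwise inner, and pointwise inner automorphisms of RAAGs are inner \cite[Proposition 6.9]{Minasyan2012}.
\item For surface groups, I would run the same argument. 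It needs conjugacy $p$-separability of surface groups (closed surface groups are subgroups of RAAGs, or one can use the Paris / Toinet results) together with Grossman's theorem that pointwise inner automorphisms of surface groups are inner. For free groups this is already covered by the RAAG case.
\end{itemize}
This step is the main obstacle. The paper has so far only cited it for RAAGs, so for surface groups I would rely on the corresponding literature.

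With injectivity in hand, item \ref{item:Out_SAC} follows because $\mathcal T_p(\mathbf G)$ is torsion-free. It is torsion-free since it is residually torsion-free nilpotent modulo the torsion-free $p$-adic analytic quotient $\mathbf Q\leqslant \GL_m(\Z_p)$; more simply, it follows from the approximation by torsion-free $p$-adic analytic quotients used in the proof of \cref{thm:aut_atiyah}. So \cref{lem:SAC_subgroups} applies to the subgroup $\mathcal T_p(G)\leqslant \mathcal T_p(\mathbf G)$, using that every finitely generated subgroup of $\mathcal T_p(\mathbf G)$ satisfies the Strong Atiyah Conjecture by \cref{thm:out_Atiyah_prop}\ref{item:out_prop_SAC}.

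Item \ref{item:Out_WAC} is then immediate. The subgroup $\mathcal T_p(G)$ has finite index in $\Out(G)$, being the kernel of a map to $\GL_m(\Z/p^\varepsilon)$ (respectively to the finite group $\Aut$ of the mod-$p^\varepsilon$ Abelianisation). So \cref{lem:weak_atiyah_commensurability} applies.

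For item \ref{item:Out_div_ring} I would split by characteristic.
\begin{itemize}
\item In characteristic $p$, $\K[\mathcal T_p(G)]$ embeds into $\K[\mathcal T_p(\mathbf G)]$, which embeds into $\K\llbracket \mathcal T_p(\mathbf G)\rrbracket$. The latter embeds into a division ring by \cref{thm:out_Atiyah_prop}\ref{item:out_prop_div}.
\item In characteristic zero, $\mathcal T_p(G)$ is torsion-free, satisfies the Strong Atiyah Conjecture over $\C$ by item \ref{item:Out_SAC}, and is sofic. Soficity holds because $\mathcal T_p(G)$ is a subgroup of the residually finite group $\mathcal T_p(\mathbf G)$, which is residually ($p$-adic analytic, hence linear). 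So \cref{lem:sofic_SAC_all_fields} gives the embedding.
\end{itemize}
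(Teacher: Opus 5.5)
Your proposal is correct and follows the paper's proof essentially step for step. The paper likewise gets injectivity of $\Out(G)\to\Out(\mathbf G)$ from conjugacy $p$-separability (Toinet for RAAGs, Paris for surface groups) together with the fact that pointwise inner automorphisms are inner (Minasyan, Grossman), and then pulls items (1)--(3) back from \cref{thm:out_Atiyah_prop} via \cref{lem:SAC_subgroups}, \cref{lem:weak_atiyah_commensurability}, \cref{lem:sofic_SAC_all_fields} and the embedding $\K[\mathcal T_p(G)]\hookrightarrow\K\llbracket\mathcal T_p(\mathbf G)\rrbracket$; you usefully spell out the torsion-freeness, soficity and finite-index facts the paper leaves implicit, but drop the parenthetical that surface groups are conjugacy $p$-separable because they embed in RAAGs, since that property does not pass to arbitrary subgroups, and rely on Paris's theorem as the paper does.
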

\begin{proof}
    First consider the case where $G$ is a finitely generated right-angled Artin group. We claim that the natural map
    \[
        \Out(G) \to \Out(\mathbf G)
    \]
    is injective. Suppose that $\varphi \in \Aut(G)$ is a non-inner automorphism; we will show that $\varphi$ induces a non-inner automorphism of $\mathbf G$. By a result of Minasyan \cite[Proposition 6.9]{Minasyan2012}, $\varphi$ is not pointwise inner, meaning there is some $g \in G$ such that $g$ and $\varphi(g)$ are non-conjugate. RAAGs are conjugacy $p$-separable by a theorem of Toinet \cite[Theorem 6.15]{Toinet2013}, so there is a characteristic $p$-quotient of $G$ in which the images of $g$ and $\varphi(g)$ are non-conjugate. But then $\varphi$ induces a non-inner automorphism of $\mathbf G$. In particular, this shows that $\mathcal T_p(G)$ injects into $\mathcal T_p(\mathbf G)$. Since both groups are torsion-free, \cref{lem:SAC_subgroups} and \cref{thm:out_Atiyah_prop} imply that $\mathcal T_p(G)$ also satisfies the Strong Atiyah Conjecture over $\C$. Since $\Out(G)$ is a finite extension of $\mathcal T_p(G)$, item \ref{item:Out_WAC} follows from \cref{lem:weak_atiyah_commensurability}. The characteristic zero case of item \ref{item:Out_div_ring} follows from item \ref{item:Out_SAC} and \cref{lem:sofic_SAC_all_fields}. The characteristic $p$ case follows directly from \cref{thm:out_Atiyah_prop} and the fact that $\K[\mathcal T_p(G)]$ embeds into $\K\llbracket \mathcal T_p(\mathbf G)\rrbracket$.
    
    Now consider the case $G = \pi_1(\Sigma)$ for a closed surface $\Sigma$ of genus $g$. As above, it suffices to show that $\Out(G) \to \Out(\mathbf G)$ is injective. But, as in the previous paragraph, this follows from the fact that surface groups are conjugacy $p$-separable \cite[Theorem 1.7]{Paris2009} and all of their pointwise inner automorphisms are inner \cite[Proof of Theorem 3]{Grossman1974}. The rest of the proof concludes as in the previous paragraph. \qedhere
\end{proof}

We note that although the Strong Atiyah Conjecture is not known to be closed under direct products, our method of proof shows that any direct product of groups of the form $\mathcal T_p(\mathbf G)$ satisfies the Strong Atiyah Conjecture over $\C$, and among torsion-free groups this passes to subgroups. There are many interesting subgroups of $\mathcal T_p(\mathbf G)$ as $G$ varies.

\begin{cor} \label{cor:congruence_atiyah}
    Let $\Sigma$ be a compact surface (possibly with boundary and finitely many marked points) and let $V$ be a handlebody. Then $\Mod(\Sigma)$ and $\Mod(V)$ satisfy the Weak Atiyah Conjecture over $\C$. Moreover, for every field $\K$, there are torsion-free subgroups $\Gamma \leqslant \Mod(\Sigma)$ and $\Lambda \leqslant \Mod(V)$ of finite index such that $\K[\Gamma]$ and $\K[\Lambda]$ embed into division rings.
\end{cor}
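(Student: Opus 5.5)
The plan is to realise both $\Mod(\Sigma)$ and $\Mod(V)$, after passing to a finite-index subgroup, as subgroups of a finite direct product of groups of the form $\mathcal T_p(G)$ or $\mathcal T_p(\mathbf G)$. Here $G$ is a free group or a surface group. We then combine the closure properties recorded just before the statement. The Strong Atiyah Conjecture over $\C$ holds for direct products of groups $\mathcal T_p(\mathbf G)$, since these products are residually torsion-free $p$-adic analytic with the product Andreadakis--Johnson chain. It also passes to subgroups of torsion-free groups by \cref{lem:SAC_subgroups}.

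\emph{Handlebody.} $\Mod(V)$ acts on $\pi_1(V) = F_g$, giving a homomorphism $\Mod(V) \to \Out(F_g)$. For closed $V$ this map is well known to be injective after restricting to a finite-index subgroup; its kernel is generated by twists on meridian disks, which are finite order or trivial in the relevant finite-index subgroup. In general we handle the kernel by also recording the action on $\pi_1(\partial V)$. Concretely, restricting to $\partial V$ embeds $\Mod(V)$ into $\Mod(\partial V)$, so the handlebody case reduces to the surface case. I would take this route: use $\Mod(V) \hookrightarrow \Mod(\partial V)$, which is standard.

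\emph{Surfaces.} For a closed surface, Dehn--Nielsen--Baer gives $\Mod(\Sigma_g) \hookrightarrow \Out(\pi_1\Sigma_g)$. Its intersection with $\mathcal T_p(\pi_1\Sigma_g)$ is a finite-index torsion-free subgroup satisfying the Strong Atiyah Conjecture and the division ring embedding, by \cref{cor:out_atiyah}.

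For boundary components and marked points, $\pi_1$ is free and $\Mod(\Sigma_{g,b,p})$ acts on it. The action is faithful on $\Out(F_n)$ except for boundary twists, which are central. To absorb these, I would use capping and forgetful maps. The Birman exact sequence and the capping sequence express $\Mod(\Sigma_{g,b,p})$ as built from $\Mod$ of a closed surface, from surface groups, and from $\Z$'s generated by boundary twists. Alternatively, I would embed $\Mod(\Sigma_{g,b,p})$ into $\Aut(F_n)$ by choosing a basepoint on a boundary component. This works because the mapping class group of a surface with boundary, with the boundary fixed pointwise, embeds in $\Aut(\pi_1)$ via the standard Dehn--Nielsen type theorem. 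Marked points are handled by turning them into boundary components, via the surjection $\Mod(\Sigma_{g,b+p,0}) \to \Mod(\Sigma_{g,b,p})$ with central $\Z^p$ kernel. This, however, goes the wrong direction. Instead, I would embed $\Mod(\Sigma_{g,b,p})$ into $\Out(\pi_1(\Sigma_{g,b,p}\smallsetminus\text{marked points}))$, restricted to the subgroup preserving peripheral structure, which is injective except in finitely many sporadic cases.

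\emph{Concluding.} Having an injection of a finite-index subgroup into $\Aut(F_n)$ or $\Out(G)$, I intersect with $T_p$ or $\mathcal T_p$. I then apply \cref{cor:aut_atiyah} or \cref{cor:out_atiyah} to get torsion-free finite-index subgroups satisfying the Strong Atiyah Conjecture, hence the Weak Atiyah Conjecture via \cref{lem:weak_atiyah_commensurability}. For a field $\K$ of characteristic $\ell$, I choose $p=\ell$, or $p$ arbitrary when $\ell = 0$, to get the division ring embedding. The main obstacle is the injectivity bookkeeping for surfaces with boundary and punctures, including the sporadic low-complexity cases, which are virtually free or abelian and handled directly.
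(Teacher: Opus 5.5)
Your closed-surface case (Dehn--Nielsen--Baer into $\Out(\pi_1\Sigma)$, intersect with $\mathcal T_p$, apply \cref{cor:out_atiyah}) and your handlebody case (via $\Mod(V)\hookrightarrow\Mod(\partial V)$) are exactly the paper's argument. The detour through direct products is not needed. Your discarded aside about $\Mod(V)\to\Out(F_g)$ is wrong: its kernel is the twist group generated by meridian twists, which have infinite order, so that map is not virtually injective. Since you do not use it, this is harmless.

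The gap is the case with boundary or marked points, which you never actually settle.

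\emph{The $\Aut(F_n)$ route is false as stated when $b\geqslant 2$.} Fix $\partial\Sigma$ pointwise and put the basepoint on one boundary component. A Dehn twist about a different boundary component is supported in a collar, and every based loop can be homotoped off that collar. So the twist acts trivially on $\pi_1(\Sigma,*)$, yet it has infinite order in $\Mod(\Sigma)$.

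\emph{Your final route has no proof of injectivity.} The map $\Mod(\Sigma)\to\Out(\pi_1(X))$ with $X=\Sigma\smallsetminus\{\text{marked points}\}$ is the paper's route. You only assert that it is injective ``except in finitely many sporadic cases''. Your own remark that boundary twists act trivially in $\Out(F_n)$ shows the map is \emph{not} injective if mapping classes must fix $\partial\Sigma$ pointwise.

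The missing step is to fix the convention and then prove injectivity. The paper uses isotopies that need not fix $\partial\Sigma$ pointwise, so boundary twists are already trivial. It then argues as follows. Since $X$ is aspherical, free homotopy classes of self-maps of $X$ correspond to conjugacy classes of endomorphisms of $\pi_1(X)$. Hence a mapping class acting trivially in $\Out(\pi_1(X))$ is homotopic to the identity, and so isotopic to it by Epstein's theorem.

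If you insist on the pointwise convention, you need an extra device to handle the central boundary twists. For example:
\begin{enumerate}
\item Glue a $(b+1)$-holed sphere along $\partial\Sigma$, so that $\Sigma$ sits in a surface with one boundary component and the inclusion homomorphism is injective.
\item Embed the mapping class group of that surface into $\Aut(F_n)$, using a basepoint on its boundary.
\item Apply \cref{cor:aut_atiyah}.
\end{enumerate}
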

\begin{proof}
    If $\Sigma$ is a closed surface, then $\Mod(\Sigma)$ embeds into $\Out(\pi_1(\Sigma))$ as a subgroup of index two. By \cref{cor:out_atiyah}, $\Out(\pi_1(\Sigma))$ has a torsion-free finite-index subgroup satisfying the Strong Atiyah Conjecture over $\C$, hence so does $\Mod(\Sigma)$. Hence, $\Mod(\Sigma)$ satisfies the Weak Atiyah Conjecture over $\C$. Similarly, $\Out(\pi_1(\Sigma))$ has a finite-index subgroup whose group algebra over $\K$ embeds into a division ring, and this property passes to subgroups.

    The natural map $\Mod(V) \to \Mod(\partial V)$ induced by restricting homeomorphisms of $V$ to its boundary $\partial V$ is injective \cite[Section 3]{Hensel_Survey}. Since $\partial V$ is a closed surface, one then draws the same conclusions for the handlebody group $\Mod(V)$ as above.

    Finally, suppose $\Sigma$ has a non-empty boundary or that its set of marked points $\{p_1, \dots, p_n\}$ is non-empty, and let $X = \Sigma \smallsetminus \{p_1, \dots, p_n\}$. Then the natural map
    \[
        \Mod(\Sigma) \to \Out(\pi_1(X))
    \]
    is injective. To see this, note that since $X$ is aspherical, there is a one-to-one correspondence between homotopy classes of maps $X \to X$ and conjugacy classes of homomorphisms $\pi_1(X) \to \pi_1(X)$. Hence, the image of an element $[\varphi]$ of $\Mod(\Sigma)$ is trivial in $\Out(\pi_1(X))$ if and only if $\varphi$ is homotopic to the identity map. But then $\varphi$ is also isotopic to the identity by \cite[Theorem 6.4]{Epstein66}, so $[\varphi]=1$. 
    
    Now, $\pi_1(\Sigma \smallsetminus \{p_1, \dots, p_n\})$ is a free group, so its outer automorphism group has a torsion-free finite-index subgroup satisfying the Strong Atiyah Conjecture over $\C$ and such that its group algebra over $\K$ embeds into a division ring by \cref{cor:out_atiyah}. These properties pass to $\Mod(\Sigma)$. \qedhere
\end{proof}

\appendix
\section{\texorpdfstring{$p$}{p}-inner automorphisms of free nilpotent groups}\label{sec:appendix}
\smallskip
\begin{center} \textsc{by Sam P.\ Fisher}\end{center}
\medskip

In \cite{Segal90}, Segal discusses two generalisations of inner automorphisms for a group $G$: pointwise inner and $p$-inner automorphisms. We say that $\varphi \in \Aut(G)$ is \emph{pointwise inner} if $\varphi(g)$ is conjugate to $g$ for every $g \in G$, and $\varphi$ is \emph{$p$-inner} for a prime $p$ if $\varphi$ induces an inner automorphism of every finite $p$-quotient of $G$. Note that an automorphism is $p$-inner if and only if the natural map
\[
    \Out(G) \to \Out(\mathbf G)
\]
is injective (where $\mathbf G$ is the pro-$p$ completion of $G$); this was a key property in the proof of the Weak Atiyah Conjecture for $\Out(A_\Gamma)$ in the main article.

Segal \cite{Segal90} provides examples of finitely generated torsion-free nilpotent groups admitting non-inner pointwise inner as well as non-inner $p$-inner automorphisms. As a counterpoint, he also shows that for every finitely generated nilpotent group $G$, there is a finite list of primes $\{p_1, \dots, p_n\}$ such that if $\varphi \in \Aut(G)$ is $p_i$-inner for each $i$, then $\varphi$ is inner.

In \cite{Endimioni2002}, Endimioni shows that pointwise inner automorphisms of free nilpotent groups are necessarily inner. The goal of this appendix is to show that $p$-inner automorphisms of finitely generated free nilpotent groups are also inner.

\begin{thm}\label{thm:p_inner_free_nilpotent}
    Let $N$ be a finitely generated free nilpotent group, let $p$ be a prime, and let $\mathbf N$ denote the pro-$p$ completion of $N$. The natural map
    \[
        \Out(N) \to \Out(\mathbf N)
    \]
    is injective.
\end{thm}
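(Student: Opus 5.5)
We argue by induction on the nilpotency class $c$ of $N = F/\gamma_{c+1}(F)$, where $F$ is free of rank $m$. Suppose $\varphi \in \Aut(N)$ induces an inner automorphism of $\mathbf N$, say conjugation by $\mathbf g \in \mathbf N$. We want to find $h \in N$ with $\varphi = \iota_h$.

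For $c = 1$ there is nothing to prove, since $N \cong \Z^m$ is abelian and the only inner automorphism of $\mathbf N \cong \Z_p^m$ is the identity. For the inductive step, note that $\mathbf N/\gamma_c(\mathbf N)$ is the pro-$p$ completion of the free nilpotent group $N/\gamma_c(N)$ of class $c-1$. The automorphism $\varphi$ induces on it an automorphism that becomes inner over the completion, so by induction it is inner. After composing $\varphi$ with an inner automorphism of $N$, we may therefore assume that $\varphi$ acts trivially on $N/\gamma_c(N)$. Then $\varphi(x_i) = x_i z_i$ with $z_i \in \gamma_c(N)$, which is central. Since $\varphi$ now agrees with conjugation by $\mathbf g$ modulo $\gamma_c(\mathbf N)$, the image of $\mathbf g$ in $\mathbf N/\gamma_c(\mathbf N)$ is central.

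The first key input is the centre of a free nilpotent group: the centre of the free nilpotent group of class $c-1$ is exactly its last term $\gamma_{c-1}$ (for $m \geqslant 2$; the case $m=1$ is trivial). We use its pro-$p$ analogue, namely that $Z(\mathbf N/\gamma_c(\mathbf N)) = \gamma_{c-1}(\mathbf N)/\gamma_c(\mathbf N)$. To prove this, pass to graded Lie algebras: $\gr(\mathbf N) = \Z_p \otimes \gr(N)$ is the free nilpotent $\Z_p$-Lie algebra, whose centre is its top degree. The centre of the group quotient is detected by the lowest-degree component via brackets with the generators $x_i$, which is the same "least degree component" argument as in \cref{cor:trivial_centres}. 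It follows that $\mathbf g \in \gamma_{c-1}(\mathbf N)$. Conjugation by $\mathbf g$ then sends $x_i \mapsto x_i[x_i,\mathbf g]$, and the map $\mathbf g \mapsto ([x_i,\mathbf g])_i$ is a homomorphism
\[
    \beta\colon \gamma_{c-1}(\mathbf N)/\gamma_c(\mathbf N) \to \bigoplus_{i=1}^m \gamma_c(\mathbf N),
\]
namely the $\Z_p$-linearisation of the analogous map $\beta_0$ on $N$, which is given by bracketing with generators in the free Lie ring. We have $z_i = [x_i,\mathbf g]$, so $(z_i)_i$ lies in $\operatorname{im}(\beta) \cap \bigoplus_i \gamma_c(N)$.

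The main obstacle is showing that this intersection equals $\operatorname{im}(\beta_0)$, a saturation statement: $\operatorname{im}(\beta_0)$ must be a pure (direct-summand) subgroup of $\bigoplus_i \gamma_c(N) \cong \Z^{k}$. Granting purity, $\Z_p$ is faithfully flat over $\Z$ on finitely generated free modules, so $(\Z_p \otimes \operatorname{im}\beta_0) \cap \Z^k = \operatorname{im}\beta_0$. This yields $h \in \gamma_{c-1}(N)$ with $z_i = [x_i,h]$ for all $i$, hence $\varphi = \iota_h$. To prove purity, observe that $\beta_0$ is injective on $\gamma_{c-1}/\gamma_c$ by the centre computation. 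It remains to check that if $[x_i,u] \in n\,\gr_c$ for every $i$, then $u \in n\,\gr_{c-1}$. I would prove this via the Magnus embedding into $\Z\langle\!\langle x_1,\dots,x_m\rangle\!\rangle$ truncated at degree $c+1$, using \cref{thm:LieAlg_multiples}\ref{item:multiples} (in the free case, where $\Gamma$ has no edges). If $\nu(u)$ has some coefficient $\lambda$ not divisible by $n$, then by \cref{prop:non-trivial-coefficient} some $[\nu(u),x_i]$ has a coefficient equal to $\lambda$, so $\nu([x_i,u]) \notin n\,\mathfrak{Lie}$, and by \cref{thm:LieAlg_multiples} we get $[x_i,u]\notin n\,\gr$. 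This is the step to check most carefully.
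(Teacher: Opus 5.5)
Your argument is correct. Its skeleton is the paper's proof of \cref{thm:appendix_main} specialised to an edgeless graph: induction on $c$, then the induction hypothesis together with $Z(\mathbf N/\gamma_c(\mathbf N))=\gamma_{c-1}(\mathbf N)/\gamma_c(\mathbf N)$ (\cref{lem:centreless_centre}) to push the conjugating element into $\gamma_{c-1}(\mathbf N)$ modulo $N$, and finally an integrality argument driven by \cref{prop:non-trivial-coefficient} and the fact that commutators with the generators land in $\gamma_c(N)$. The paper phrases the first reduction through the normaliser criterion of \cref{lem:normaliser_reformulation}, whereas you simply compose $\varphi$ with an inner automorphism of $N$; these are equivalent.

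The genuine difference is the integrality step. The paper works directly in $\Z_p$. It expands $\overline y$ in a Lyndon heap basis (\cref{thm:Lyndon_basis}), and the unitriangularity of that basis with respect to $\prec$ shows that the first non-integral coordinate $\lambda_i$ forces a non-integral coefficient of $w_i$ in $(\Z_p\otimes\nu)(\overline y)$. \cref{prop:non-trivial-coefficient} then transports this coefficient to a bracket with a generator, contradicting $[y,s_l]\in\gamma_c(N_c)$. You instead prove a statement purely over $\Z$, namely purity of the image of the bracket-with-generators map $\beta_0$, via \cref{thm:LieAlg_multiples}\ref{item:multiples} and \cref{prop:non-trivial-coefficient}, and then base change to $\Z_p$. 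Your route needs no Lyndon heaps, only results already proved in \cref{sec:magnus_RAAG}, and it isolates a $p$-independent integral statement. The paper's route is more direct and avoids the commutative-algebra transfer.

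One correction to that transfer step: $\Z_p$ is flat but not faithfully flat over $\Z$ (for instance $\Z_p\otimes\Z/\ell=0$ when $\ell\neq p$), so that is not the right justification. What you actually use is that if $\Z^k=M\oplus C$, then $(\Z_p\otimes M)\cap\Z^k=M$. This is immediate from $\Z_p^k=(\Z_p\otimes M)\oplus(\Z_p\otimes C)$ and the injectivity of $C\hookrightarrow \Z_p\otimes C$. In fact $((\Z_p\otimes M)\cap\Z^k)/M$ is exactly the prime-to-$p$ torsion subgroup of $\Z^k/M$. So only divisibility by integers coprime to $p$ matters, and your purity statement for all $n$ is more than enough.
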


A sufficient condition for $p$-inner automorphisms of $G$ to be inner is that $G$ is that $G$ be conjugacy $p$-separable and that all its pointwise inner automorphisms are inner. While Endimioni's result provides one half of this criterion, finitely generated torsion-free nilpotent groups are only conjugacy $p$-separable if they are Abelian \cite{Ivanova2002}, so more work is necessary to establish \cref{thm:p_inner_free_nilpotent}.

\subsection{Setup and notation}

We will prove \cref{thm:p_inner_free_nilpotent} for a class of nilpotent groups properly containing the class of free nilpotent groups. Let $\Gamma$ be a simplicial graph and let $A_\Gamma$ be the associated RAAG. The \emph{free partially commutative nilpotent group of class $c$} is
\[
    N_{c,\Gamma} = A_\Gamma/\gamma_{c+1}(A_\Gamma).
\]
These are the free $\Gamma$-partially commutative objects in the variety of class at most $c$ nilpotent groups. If $\Gamma$ has no edges, then $N_{c,\Gamma}$ is the free nilpotent group of class $c$ generated by $V(\Gamma)$.

Throughout the rest of the appendix, the graph $\Gamma$ will be fixed. For each vertex $v$ of $\Gamma$, we denote by $s_v$ the corresponding generator of $\mathbf A_\Gamma$. We also fix a prime $p$ and denote by $\mathbf A_\Gamma$ (resp.\ $\mathbf N_{c,\Gamma}$) the pro-$p$ completion of $A_\Gamma$ (resp.\ $N_{c,\Gamma}$). By \cref{thm:DuchampKrob}, there is an embedding
\[
    \mu \colon A_\Gamma \to 1 + \omega_{\Z\llangle \Gamma \rrangle}, \quad s_v \mapsto 1 + v
\]
such that $\mu\inv(1 + \omega_{\Z\llangle \Gamma \rrangle}^i) = \gamma_i(A_\Gamma)$ for all $i \in \N$. It now follows that $N_{c,\Gamma}$ is torsion-free nilpotent and therefore is residually $p$.

By \cref{thm:propMagnus} $\mu$ induces an embedding
\[
    \bm{\mu} \colon \mathbf A_\Gamma \to 1 + \omega_{\Z_p\llangle \Gamma \rrangle}
\]
of the pro-$p$ completion with the corresponding property $\bm{\mu}\inv(1 + \omega_{\Z_p\llangle \Gamma \rrangle}^i) = \gamma_i(\mathbf A_\Gamma)$ for all $i \in \N$ for all $i \in \N$.

Recall that there is a map
\[
    \nu \colon \gr(A_\Gamma) \hookrightarrow \mathfrak{Lie}(\Z\langle \Gamma \rangle)
\]
defined by sending $g\gamma_{i+1}(A_\Gamma)$ to the degree $i$ component of $\mu(g)$ in $\Z\llangle \Gamma \rrangle$, where $g \in \gamma_{i}(A_\Gamma)$. This extends to a map
\[
    \bm\nu = \Z_p \otimes \nu \colon \gr(\mathbf A_\Gamma) \to \mathfrak{Lie}(\Z_p\langle \Gamma\rangle),
\]
which can also be defined by sending $g\gamma_{i+1}(\mathbf F)$ to the degree $i$ component of $\bm\mu(g)$ in $\Z_p\llangle X\rrangle$ for $g \in \gamma_i(\mathbf F)$. Both $\nu$ and $\bm\nu$ are injective Lie algebra homomorphisms.

We will need the notion of a Lyndon heap, which was not introduced in the main article. Lyndon heaps are special kinds of bases for $\gr(A_\Gamma)$; they were originally defined by Lalonde in \cite{Lalonde_basesLyndon93}. It will suffice to record some of their properties (as listed in \cite[Theorem 3.6]{Wade_JohnsonHomsHigherRank}) rather than provide a complete definition.

\begin{thm}[{\cite{Lalonde_basesLyndon93}}]\label{thm:Lyndon_basis}
    Let $M$ be the monoid of positive words in the letters $V(\Gamma)$. There is a subset $L \subseteq M$, a total order $\prec$ on $M$, and a set 
    \[
        \mathcal B = \{\beta_w : w \in \mathcal L\} \subseteq \gr(A_\Gamma)
    \]
    indexed by $\mathcal L$ such that
    \begin{enumerate}
        \item $\mathcal B$ is a $\Z$-basis for $\gr(A_\Gamma)$, viewed as a free $\Z$-module;
        \item the coefficient of $w$ in $\nu(\beta_w)$ is $1$;
        \item if the coefficient of $u \in M$ in $\nu(\beta_w)$ is non-zero, then $w \preccurlyeq u$.
    \end{enumerate}
\end{thm}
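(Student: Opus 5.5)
The plan is to reconstruct Lalonde's construction via the Duchamp--Krob identification of $\gr(A_\Gamma)$ with the free partially commutative Lie ring. By \cref{thm:LieAlg_multiples}, $\nu$ embeds $\gr(A_\Gamma)$ into $\mathfrak{Lie}(\Z\langle\Gamma\rangle)$. Since $\gr(A_\Gamma)$ is generated in degree one, its image is the Lie subring generated by $V(\Gamma)$. So it suffices to produce a $\Z$-basis of this Lie subring with the stated triangularity.

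\textbf{Step 1: Lyndon heaps and their bracketing.} Fix a total order on $V(\Gamma)$. Work with the trace (Cartier--Foata) monoid, i.e.\ positive words modulo the commutations $uv = vu$ for $\{u,v\} \in E(\Gamma)$. Represent each trace by a normal form in $M$, say the lexicographically least representative, and extend the order to a total order $\prec$ on $M$ compatible with length and lexicographic order. Define $\mathcal L$ to be the set of \emph{Lyndon heaps}: connected traces that are strictly smaller than all of their nontrivial ``conjugates'' $vu$ whenever $w = uv$ as traces. For $w \in \mathcal L$ of length at least $2$, take the standard factorisation $w = uv$, where $v$ is the $\prec$-least (equivalently, longest) proper Lyndon right factor. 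Then set $\beta_w = [\beta_u,\beta_v]$, with $\beta_v = s_v\gamma_2$ in degree one.

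\textbf{Step 2: leading terms.} I will prove property (2) and property (3) simultaneously by induction on length. We have $\nu(\beta_w) = \nu(\beta_u)\nu(\beta_v) - \nu(\beta_v)\nu(\beta_u)$. By induction and multiplicativity of $\prec$ on concatenations, the first product has least monomial $uv = w$ with coefficient $1$ and all other monomials $\succ w$. Every monomial in the second product is $\succcurlyeq vu$, and the Lyndon property gives $vu \succ uv$. Hence the coefficient of $w$ in $\nu(\beta_w)$ is exactly $1$, and all other monomials are $\succ w$. The delicate point is that the products must be taken in $\Z\langle\Gamma\rangle$, where monomials are traces. So I must check that the ordering of normal forms is compatible with concatenation of traces. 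This is a property of Lalonde's heap order that I would verify directly.

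\textbf{Step 3: basis.} Linear independence over $\Z$ follows from Step 2, because the elements $\nu(\beta_w)$ have pairwise distinct leading monomials $w$ with coefficient $1$.

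For spanning over $\Q$, I will compare ranks degree by degree.
\begin{itemize}
\item Every trace factors uniquely as a $\prec$-nonincreasing product of Lyndon heaps; this is the heap analogue of Chen--Fox--Lyndon factorisation.
\item The enveloping algebra of the free partially commutative Lie algebra is $\Q\langle\Gamma\rangle$.
\item By the Poincar\'e--Birkhoff--Witt theorem, the Hilbert series of $\Q\langle\Gamma\rangle$, namely the inverse of the clique polynomial, equals $\prod_n(1-t^n)^{-r_n}$, where $r_n$ is the rank of the degree-$n$ part.
\item The factorisation gives the same product with $r_n$ replaced by the number of Lyndon heaps of length $n$.
\end{itemize}
Hence the two counts agree, and $\mathcal B$ is a $\Q$-basis of $\gr(A_\Gamma)\otimes\Q$.

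Finally, I upgrade to a $\Z$-basis. Take $x \in \gr(A_\Gamma)$ and write $x = \sum q_w \beta_w$ with $q_w \in \Q$. Take the $\prec$-least $w$ with $q_w \neq 0$. By unitriangularity, $q_w$ is the coefficient of $w$ in $\nu(x) \in \Z\langle\Gamma\rangle$, so $q_w \in \Z$. Subtracting $q_w\beta_w$ and inducting, which terminates in each finite-rank degree, shows that all $q_w$ are integers.

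\textbf{Main obstacle.} I expect the hard step to be the combinatorics of Lyndon heaps. This means establishing unique nonincreasing factorisation of traces and the inequality $vu \succ uv$ for the standard factorisation in the presence of commutations. Both depend on choosing the right normal form and order on heaps. My first attempt would be Lalonde's choice: order heaps via their lexicographically minimal word representatives, and define Lyndon heaps as those connected heaps whose minimal representative is a Lyndon word. I would then transfer the classical free-monoid proofs through this representative.
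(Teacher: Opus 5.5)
The paper gives no proof of this statement: it imports it from Lalonde, in the form recorded as Wade's Theorem 3.6, and uses it as a black box. So your proposal has to stand on its own. The skeleton is sound: unitriangularity of the $\nu(\beta_w)$, a PBW/Hilbert-series count for spanning over $\Q$, then integrality by peeling off the $\prec$-least term. The final integrality step is correct.

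\textbf{Step 2 rests on a false lemma.} You use multiplicativity of $\prec$, i.e.\ $m_1\succcurlyeq u$ and $m_2\succcurlyeq v$ imply $m_1m_2\succcurlyeq uv$, for traces ordered by lex-least representatives. This fails even for traces with the same content. Take $1<2<3$ with the single edge $\{2,3\}$. Then $12\prec 21$, but $3\cdot 21=321=231\prec 312=3\cdot 12$. Right multiplication does preserve $\prec$ among traces of equal content, but left multiplication does not, and both products in Step 2 need it.

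The failure occurs in an actual Lyndon heap. $w=1213$ is Lyndon in your Step 1 sense, with standard factorisation $u=12$, $v=13$. The second product $\nu(\beta_v)\nu(\beta_u)$ contains $13\cdot 21=1321=1231$, and $1231\prec 1312=vu$. So your claim that every monomial of that product is $\succcurlyeq vu$ is false. The leading term $1213$ survives only because $1231\succ 1213$ and because this term cancels against $12\cdot 31$ from the first product. What you are missing is a trace-specific argument that controls how letters of one factor migrate forward in the normal form of a product, using the Lyndon property of $w$ itself. That is the real content of Lalonde's theorem, and ``verifying compatibility of the order with concatenation'' cannot supply it, because that compatibility is false.

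\textbf{Your two definitions of Lyndon heap disagree.} The one in your last paragraph, attributed to Lalonde (connected, with lex-least representative a Lyndon word), gives the wrong count. Take $a<b<c$ with the single edge $\{a,c\}$. Both $abc$ and $acb=cab$ qualify. But the multidegree $(1,1,1)$ part of $\gr(A_\Gamma)$ has rank $1$, since $[b,[a,c]]=0$ forces $[a,[b,c]]=[[a,b],c]$. Indeed $acb=c\cdot ab$ has the smaller conjugate $abc$, so it fails your Step 1 definition. Its bracketing $[c,[a,b]]$ has least monomial $abc$ with coefficient $-1$. Word-Lyndonness of the representative is therefore necessary but not sufficient, and transferring the free-monoid proofs through the representative breaks down.

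\textbf{Smaller issues in the standard factorisation and the count.}
\begin{itemize}
\item The phrase ``$\prec$-least (equivalently, longest)'' is wrong for a length-first order. For $w=1213$ the $\prec$-least Lyndon right factor would be $3$, leaving the non-Lyndon cofactor $121$.
\item Right factors of a trace do not form a chain. So the existence of a standard factorisation with both factors Lyndon, and the unique nonincreasing factorisation your count relies on, both need trace-specific proofs that the proposal does not provide.
\end{itemize}
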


The theorem immediately implies that $\mathcal B$ is also a $\Z_p$-basis for $\gr(\mathbf A_\Gamma)$, that the coefficient of $w$ in $(\Z_p \otimes \nu)(\beta_w)$ is $1$, and that if the coefficient of the element $u \in M$ in $(\Z_p \otimes \nu)(\beta_w)$ is non-zero, then $w \preccurlyeq u$. A set $\mathcal B$ as in \cref{thm:Lyndon_basis} will be called a \emph{Lyndon heap} for the Lie algebra $\gr(A_\Gamma)$.

\subsection{Proof of the main result}

We need two preliminary lemmas.

\begin{lem}\label{lem:centreless_centre}
    If $A_\Gamma$ is a centreless RAAG, then 
    \[
        Z(N_{c,\Gamma}) = \gamma_c(N_{c,\Gamma}) \quad \text{and} \quad Z(\mathbf N_{c,\Gamma}) = \gamma_c(\mathbf N_{c,\Gamma})
    \]
\end{lem}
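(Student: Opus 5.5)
The inclusion $\gamma_c(N_{c,\Gamma}) \leqslant Z(N_{c,\Gamma})$ is immediate, since $[\gamma_c(N_{c,\Gamma}), N_{c,\Gamma}] \leqslant \gamma_{c+1}(N_{c,\Gamma}) = \{1\}$. The same holds for $\mathbf N_{c,\Gamma}$, because $\mathbf N_{c,\Gamma}$ is nilpotent of class $c$. For this I use that $\mathbf N_{c,\Gamma} \cong \mathbf A_\Gamma/\gamma_{c+1}(\mathbf A_\Gamma)$, which follows from the universal property of pro-$p$ completions together with the density of $A_\Gamma$ in $\mathbf A_\Gamma$. So the content is the reverse inclusion. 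I prove it in the pro-$p$ case via the Magnus embedding $\bm\mu$ of \cref{thm:propMagnus}, and run the identical argument with $\mu$ of \cref{thm:DuchampKrob} for the discrete case.

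Let $g \in \mathbf A_\Gamma$ with $g\gamma_{c+1}(\mathbf A_\Gamma)$ central in $\mathbf N_{c,\Gamma}$. Suppose for contradiction that $g \notin \gamma_c(\mathbf A_\Gamma)$. Since $g \notin \gamma_c(\mathbf A_\Gamma)$ and $\gamma_1(\mathbf A_\Gamma) = \mathbf A_\Gamma$, we have $g \in \gamma_i(\mathbf A_\Gamma) \smallsetminus \gamma_{i+1}(\mathbf A_\Gamma)$ for some $1 \leqslant i < c$. Put $y = g\gamma_{i+1}(\mathbf A_\Gamma) \neq 0$ in $\gr(\mathbf A_\Gamma)$. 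By \cref{cor:trivial_centres}\ref{item:centre_Lie_prop}, $y$ is not central. In fact its proof gives more: by \cref{prop:non-trivial-coefficient} applied to $x = \bm\mu(g)_i \neq 0$, there is a vertex $v$ with $[x,v] \neq 0$.

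Next I identify $[x,v]$ with the leading term of a group commutator. A direct expansion of $\bm\mu([g,s_v]) = \bm\mu(g)^{-1}(1+v)^{-1}\bm\mu(g)(1+v)$ shows that its degree-$(i+1)$ component is $[x,v]$, with no lower-degree components beyond the constant $1$. By \cref{thm:propMagnus}\ref{item:LCS_preimage}, this means $[g,s_v] \notin \gamma_{i+2}(\mathbf A_\Gamma)$. Since $i+2 \leqslant c+1$, we get $[g,s_v] \notin \gamma_{c+1}(\mathbf A_\Gamma)$. Hence the image of $g$ does not commute with the image of $s_v$ in $\mathbf N_{c,\Gamma}$, a contradiction. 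Therefore $g \in \gamma_c(\mathbf A_\Gamma)$, so $Z(\mathbf N_{c,\Gamma}) \leqslant \gamma_c(\mathbf N_{c,\Gamma})$.

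For the discrete group, the same chain works with $\mu$, with \cref{thm:DuchampKrob}(3) in place of \cref{thm:propMagnus}\ref{item:LCS_preimage}, and with \cref{prop:non-trivial-coefficient} over $\Z$ (Wade's original statement). The only point requiring care is the degree computation, namely that the leading term of $\bm\mu([g,s_v])$ is exactly the ring commutator $[x,v]$. This is routine: write $\bm\mu(g) = 1 + x + (\text{higher})$ and expand modulo $\omega^{i+2}$.
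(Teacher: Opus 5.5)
Your proof is correct and is essentially the paper's argument: take $i<c$ with $g\in\gamma_i\smallsetminus\gamma_{i+1}$, use \cref{prop:non-trivial-coefficient} to find $v$ with $[x,v]\neq 0$, and conclude $[g,s_v]\notin\gamma_{i+2}\supseteq\gamma_{c+1}$. Where the paper invokes that $\nu$ is a Lie algebra homomorphism, you instead expand $\bm\mu([g,s_v])$ by hand, which is only a cosmetic difference. The one substantive difference is in your favour: you run the argument directly in $\mathbf A_\Gamma$ via \cref{thm:propMagnus}, whereas the paper obtains the pro-$p$ case by ``taking closures'', and on its own that only yields the trivial inclusion $\gamma_c(\mathbf N_{c,\Gamma})\subseteq Z(\mathbf N_{c,\Gamma})$, not $Z(\mathbf N_{c,\Gamma})\subseteq\gamma_c(\mathbf N_{c,\Gamma})$.
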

\begin{proof}
    We begin with the proof in the discrete case. Since $N_{c,\Gamma}$ is nilpotent of class at most $c$, it is immediate that 
    \[
        \gamma_c(N_{c,\Gamma}) \subseteq Z(N_{c,\Gamma}).
    \]
    Now suppose that $g \in N_{c,\Gamma} \smallsetminus \gamma_c(N_{c,\Gamma})$, and choose a preimage $\widehat{g}$ of $g$ in $A_\Gamma$. Then $\widehat g \in \gamma_i(A_\Gamma)$ for some $i < c$, and thus $x = \nu(\widehat g \gamma_{i+1}(A_\Gamma))$ is non-zero in $\Z\llangle \Gamma\rrangle$ by the injectivity of $\nu$. By \cref{prop:non-trivial-coefficient}, there is a vertex $v \in V(\Gamma)$ such that $[v,x] \neq 0$. But $v = \nu(s_v)$, and therefore 
    \[
        [s_v \gamma_2(A_\Gamma), \widehat g \gamma_{i+1}(A_\Gamma)] = [s_v,\widehat g] \gamma_{i+2}(A_\Gamma)
    \]
    is non-zero in $\gr(A_\Gamma)$. But then $[s_v,g]$ is also non-zero in $\gr(N_{c,\Gamma})$ (where we are abusing notation by considering $s_v$ as an element of $N_{c,\Gamma})$, since $i < c$. Hence, $g$ is not central. 
    
    The pro-$p$ case follows at once by taking the closures of $Z(N_{c,\Gamma})$ and $\gamma_c(N_{c,\Gamma})$. \qedhere
\end{proof}

\begin{lem}\label{lem:normaliser_reformulation}
    Let $G$ be a finitely generated residually $p$ group and let $\mathbf G$ be its pro-$p$ completion. The natural map
    \[
        \Out(G) \to \Out(\mathbf G)
    \]
    is injective if and only if $N_{\mathbf G}(G) = Z(\mathbf G)G$.
\end{lem}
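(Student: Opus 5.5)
The plan is to show both implications directly, using that $G$ embeds densely in $\mathbf G$ (since $G$ is residually $p$). Two facts are used throughout. First, every automorphism $\varphi$ of $G$ extends uniquely to a continuous automorphism $\bm\varphi$ of $\mathbf G$. Second, an automorphism of $\mathbf G$ is determined by its restriction to the dense subgroup $G$. Because $G$ is finitely generated, every automorphism of $\mathbf G$ is continuous, so this is no restriction.

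For ($\Leftarrow$), suppose $N_{\mathbf G}(G) = Z(\mathbf G)G$, and let $\varphi \in \Aut(G)$ have $\bm\varphi$ inner, say $\bm\varphi(x) = x^h$ with $h \in \mathbf G$. Then $G^h = \bm\varphi(G) = \varphi(G) = G$, so $h \in N_{\mathbf G}(G)$. By hypothesis we can write $h = zg$ with $z \in Z(\mathbf G)$ and $g \in G$. Conjugation by $h$ then agrees with conjugation by $g$, so $\varphi$ is inner in $G$. This proves injectivity.

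For ($\Rightarrow$), the inclusion $Z(\mathbf G)G \subseteq N_{\mathbf G}(G)$ is trivial. Conversely, take $h \in N_{\mathbf G}(G)$. Conjugation by $h$ restricts to an automorphism $\varphi$ of $G$; it is bijective because $G^{h} = G$. Its continuous extension $\bm\varphi$ agrees with conjugation by $h$ on the dense subgroup $G$, so the two are equal on $\mathbf G$, and hence $[\varphi]$ maps to the trivial class in $\Out(\mathbf G)$. Injectivity then gives $g \in G$ with $\varphi(x) = x^g$ for all $x \in G$. So $hg^{-1}$ centralises $G$. The centraliser of a subset is closed and $G$ is dense, so $hg^{-1}$ centralises $\mathbf G$, that is, $hg^{-1} \in Z(\mathbf G)$. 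Therefore $h \in Z(\mathbf G)G$.

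The only potential subtlety is the identification of $\Aut(\mathbf G)$ with continuous automorphisms and the uniqueness of extensions. Both rest on density of $G$ in $\mathbf G$ and on continuity of the group operations, so I expect no real obstacle.
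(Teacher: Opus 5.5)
Your proposal is correct and follows the paper's proof essentially step for step: in one direction you decompose the conjugating element as $zg$ with $z \in Z(\mathbf G)$, $g \in G$, and in the other you use injectivity together with density of $G$ to show $hg^{-1} \in Z(\mathbf G)$. You are slightly more explicit than the paper in two places: you check that the continuous extension of the restricted conjugation is conjugation by $h$ itself, and you note that centralisers are closed.
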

\begin{proof}
    Let $x \in N_{\mathbf G}(G)$. Then the conjugation map $(\cdot)^x \colon \mathbf G \to \mathbf G$ restricts to an automorphism of $G$. If $\Out(G) \to \Out(\mathbf G)$ is injective, then $(\cdot)^x$ must restrict to an inner automorphism of $G$, so there is some $g \in G$ such that $(\cdot)^g = (\cdot)^x$ on $G$. But $G$ is dense in $\mathbf G$, so in fact $(\cdot)^g = (\cdot)^x$ on $\mathbf G$, which implies that $g\inv x \in Z(\mathbf G)$, and therefore that $g \in Z(\mathbf G)G$.

    Conversely, suppose that $N_{\mathbf G}(G) = Z(\mathbf G)G$, and $x \in \mathbf G$ is an element such that $\varphi \in \Aut(G)$ induces the inner automorphism of $\mathbf G$ given by conjugation by $x$. Then $x$ normalises $G$, so $x = zg$ for some $z \in Z(\mathbf G)$ and $g \in G$. But then $(\cdot)^x = (\cdot)^g$, implying that $\varphi \in \Inn(G)$. \qedhere
\end{proof}

We are now ready to prove the main result, which we restate in full generality. \cref{thm:p_inner_free_nilpotent} follows by setting $\Gamma$ to be a graph with no edges in the following theorem.

\begin{thm}\label{thm:appendix_main}
    Let $c \in \N$ and let $\Gamma$ be finite simplicial graph. The natural map
    \[
        \Out(N_{c,\Gamma}) \to \Out(\mathbf N_{c,\Gamma})
    \]
    is injective.
\end{thm}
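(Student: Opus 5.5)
By \cref{lem:normaliser_reformulation}, applied to the residually $p$ group $N_{c,\Gamma}$, it suffices to show that
\[
N_{\mathbf N_{c,\Gamma}}(N_{c,\Gamma}) = Z(\mathbf N_{c,\Gamma})\,N_{c,\Gamma}.
\]
The plan is to reduce to the case where $A_\Gamma$ is centreless, then argue by induction on $c$ using the Magnus embedding.

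\textbf{Reduction to the centreless case.} Write $A_\Gamma = A_\Lambda \times A_\Upsilon$ with $A_\Lambda \cong \Z^k$ central and $A_\Upsilon$ centreless. Then
\[
N_{c,\Gamma} = \Z^k \times N_{c,\Upsilon} \quad\text{and}\quad \mathbf N_{c,\Gamma} = \Z_p^k \times \mathbf N_{c,\Upsilon}.
\]
An element $(a,x)$ normalises $\Z^k \times N_{c,\Upsilon}$ if and only if $x$ normalises $N_{c,\Upsilon}$. Moreover $Z(\mathbf N_{c,\Gamma}) = \Z_p^k \times Z(\mathbf N_{c,\Upsilon})$. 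So the claim for $\Gamma$ follows from the claim for $\Upsilon$, and from now on we assume $Z(A_\Gamma)=1$.

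\textbf{Induction on $c$.} The case $c=1$ is trivial, since then $\mathbf N_{1,\Gamma}$ is abelian. Let $c \geqslant 2$ and let $x \in \mathbf N_{c,\Gamma}$ normalise $N_{c,\Gamma}$. Its image in $\mathbf N_{c-1,\Gamma}$ normalises $N_{c-1,\Gamma}$. By induction, that image lies in $Z(\mathbf N_{c-1,\Gamma})N_{c-1,\Gamma}$, which equals $\gamma_{c-1}(\mathbf N_{c-1,\Gamma})N_{c-1,\Gamma}$ by \cref{lem:centreless_centre} (for $c-1=1$ this holds trivially). Lifting, we may write $x = gy$ with $g \in N_{c,\Gamma}$ and $y \in \gamma_{c-1}(\mathbf N_{c,\Gamma})$, and $y$ still normalises $N_{c,\Gamma}$.

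For each vertex $v$ the commutator $[s_v,y]$ lies in
\[
N_{c,\Gamma} \cap \gamma_c(\mathbf N_{c,\Gamma}) = \gamma_c(N_{c,\Gamma}),
\]
using that $N_{c,\Gamma}\cap\gamma_i(\mathbf N_{c,\Gamma})=\gamma_i(N_{c,\Gamma})$, which follows from \cref{thm:propMagnus}\ref{item:LCS_preimage} together with \cref{thm:DuchampKrob}. Hence the element $X := \bm\nu(y\gamma_c) \in \Z_p\langle\Gamma\rangle_{c-1}$ satisfies $[X,v] = \bm\nu([y,s_v]\gamma_{c+1}) \in \Z\langle\Gamma\rangle_c$ for every $v$.

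\textbf{Main step: $y$ is congruent modulo $\gamma_c(\mathbf N_{c,\Gamma})$ to an element of $N_{c,\Gamma}$.} Write $X = \sum_k \lambda_k x_k$ in monomials. By \cref{prop:non-trivial-coefficient}, each $\lambda_k$ appears as a coefficient of some $[X,v]$. Therefore every $\lambda_k$ lies in $\Z_p \cap \Z = \Z$, i.e.\ $X \in \Z\langle\Gamma\rangle$. It remains to show that an element of $\bm\nu(\gr_{c-1}(\mathbf A_\Gamma))$ with integer coefficients lies in $\nu(\gr_{c-1}(A_\Gamma))$. For this I would use the Lyndon heap of \cref{thm:Lyndon_basis}. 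Expand $y\gamma_c = \sum_w a_w\beta_w$ with $a_w \in \Z_p$. By triangularity, taking the $\prec$-minimal $w$ with $a_w \neq 0$ shows that $a_w$ is the coefficient of $w$ in $X$, hence an integer. Subtracting $a_w\beta_w$ and inducting along $\prec$ (finitely many $w$ in each degree) gives that all $a_w \in \Z$.

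Thus $y \equiv h \pmod{\gamma_c(\mathbf N_{c,\Gamma})}$ for some $h \in \gamma_{c-1}(N_{c,\Gamma})$. Since $\gamma_c(\mathbf N_{c,\Gamma}) = Z(\mathbf N_{c,\Gamma})$ by \cref{lem:centreless_centre}, we get
\[
x = gy \in N_{c,\Gamma}\,Z(\mathbf N_{c,\Gamma}),
\]
which closes the induction. I expect this integrality step to be the main obstacle. Injectivity of the bracket map alone only forces rationality of the coefficients, and the coefficient-extraction of \cref{prop:non-trivial-coefficient} combined with Lyndon triangularity is what should give full integrality.
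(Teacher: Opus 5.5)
Your proposal is correct and follows essentially the same route as the paper: you reduce to the centreless case, use induction on $c$ with the normaliser criterion to write $x = gy$ with $y \in \gamma_{c-1}(\mathbf N_{c,\Gamma})$, and then combine the coefficient extraction of \cref{prop:non-trivial-coefficient} with Lyndon-heap triangularity to show that $y\gamma_c(\mathbf N_{c,\Gamma})$ has integral coordinates. The differences are organisational: you first prove that the monomial coefficients of $X$ are integers and then peel off basis elements along $\prec$, whereas the paper argues via a $\prec$-minimal non-integral coefficient; you also usefully make explicit the identity $N_{c,\Gamma}\cap\gamma_c(\mathbf N_{c,\Gamma})=\gamma_c(N_{c,\Gamma})$, which the paper uses tacitly.
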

\begin{proof}
    For ease of notation, we will denote $N_{c,\Gamma}$ by $N_c$. We will prove the claim by induction on $c$. When $c = 1$, the claim is trivial since $N_1$ is a free Abelian group. Assume $c > 1$ and that the result holds for free partially commutative nilpotent groups of class less than $c$. Let $\varphi \in \Aut(N_c)$ be such that $\bm\varphi \in \Inn(\mathbf N_c)$, where $\bm{\varphi} \colon \mathbf N_c \to \mathbf N_c$ is the automorphism of pro-$p$ completions induced by $\varphi$. Let $x \in \mathbf N_c$ be such that $\bm\varphi = (\cdot)^x$. Our goal is to prove that $x \in Z(\mathbf N_c) N_c$.

    \begin{claim}
        It suffices to assume that $A_\Gamma$ is centreless.
    \end{claim}
    \begin{proof}
        Let $\Lambda \leqslant \Gamma$ be the full clique subgraph such that $Z(A_\Gamma) = A_\Lambda$. Then
        \[
            A_\Gamma \cong A_\Lambda \times A_\Upsilon
        \]
        for some full subgraph $\Upsilon \leqslant \Gamma$ and, moreover, there are isomorphisms 
        \[
            N_{c,\Gamma} \cong A_\Lambda \times N_{c,\Upsilon} \quad \text{and} \quad \mathbf N_{c,\Gamma} \cong \mathbf A_\Lambda \times \mathbf N_{c,\Upsilon}.
        \]
        Write $x = ab$ for $a \in \mathbf A_\Lambda$ and $b \in \mathbf N_{c,\Upsilon}$. Then $(\cdot)^b \colon \mathbf N_{c,\Upsilon} \to \mathbf N_{c,\Upsilon}$ restrits to an automorphism of $N_{c,\Upsilon}$. Therefore $b = st$ for some $s \in Z(\mathbf N_{c,\Upsilon})$ and $t \in N_{c,\Upsilon}$, since $A_\Upsilon$ is centreless. But then $x = (as)t$, where $as$ is central, as desired. \renewcommand\qedsymbol{$\diamond$}\qedhere
    \end{proof}
    
    From now on, we assume that $A_\Gamma$ is centreless. Since $\overline x \in \mathbf N_c/\gamma_c(\mathbf N_c) \cong \mathbf N_{c-1}$ normalises $N_c/\gamma_c(N_c) \cong N_{c-1}$, by induction and by \cref{lem:normaliser_reformulation} we have that
    \[
        \overline x \in Z(\mathbf N_c/\gamma_c(\mathbf N_c)) \left( N_c/\gamma_c(N)\right) = \frac{\gamma_{c-1}(\mathbf N_c) N_c}{\gamma_c(\mathbf N_c)}.
    \]
    Note that we have used \cref{lem:centreless_centre} to conclude that that 
    \[
        Z(\mathbf N_c/\gamma_c(\mathbf N_c)) = \gamma_{c-1}(\mathbf N_c).
    \]
    This shows that $x \in \gamma_{c-1}(\mathbf N_c)N_c$, so there are elements $y \in \gamma_{c-1}(\mathbf N_c)$ and $g \in N_c$ such that $x = yg$.

    Fix a Lyndon basis $\mathcal B$ for $\gr(F)$ (with notation as in \cref{thm:Lyndon_basis}) and express $\overline y \in \gamma_{c-1}(\mathbf N_c)/\gamma_c(\mathbf N_c) \cong \Z_p^m$ as a sum $\overline y = \sum_{i=1}^m \lambda_i \beta_{w_i}$ for some $\lambda_i \in \Z_p$. Without loss of generality we assume that $w_1 \prec \dots \prec w_m$, where $\prec$ is the order on $M$. We are slightly abusing notation here by making the natural identification
    \[
        \gamma_{c-1}(\mathbf N_c)/\gamma_c(\mathbf N_c) \cong \gamma_{c-1}(\mathbf A_\Gamma)/\gamma_c(\mathbf A_\Gamma).
    \]

    \begin{claim}\label{claim:coeffs_in_Z}
        We have $\lambda_i \in \Z$ for each $i \in \{1, \dots, m\}$.
    \end{claim}
    \begin{proof}
        Suppose for a contradiction that not all the coefficients $\lambda_i$ lie in $\Z$. Let $i \in \{1, \dots, m\}$ be minimal such that $\lambda_i \in \Z_p \smallsetminus \Z$. Suppose that $w_i$ appears with non-zero coefficient in $(\Z_p \otimes \nu)(\beta_{w_j})$ for some $i < j$. By \cref{thm:Lyndon_basis}, we then have $w_j \prec w_i \prec w_j$, a contradiction. Hence, $w_i$ can only appear with non-zero coefficient in $(\Z_p \otimes \nu)(\beta_{w_j})$ if $j \leqslant i$, and if $j < i$ then the coefficient of $w_i$ lies in $\Z$ by the minimality assumption. Hence, the coefficient $\tau$ of $w_i$ in $(\Z_p \otimes \nu)(\overline y)$ lies in $\Z_p \smallsetminus \Z$. By \cref{prop:non-trivial-coefficient}, there is some $x_l \in X$ such that the coefficient of $w_i x_l$ in 
        \[
            [(\Z_p \otimes \nu)(\overline y), x_l]
        \]
        is $\tau \in \Z_p \smallsetminus \Z$.

        On the other hand, since $x$ normalises $N_c$, so must $y$, and therefore 
        \[
            [\overline y, s_l] \in \gamma_c(N_c) \subseteq \gamma_c(\mathbf N_c).
        \]
        But $[(\Z_p \otimes \nu)(\overline y), x_l]$ is the image of $[\overline y, s_l]\gamma_{c+1}(\mathbf F)$ under $\Z_p \otimes \nu$, and therefore $[(\Z_p \otimes \nu)(\overline y), x_l] \in \Z\llangle X\rrangle$, contradicting the conclusion of the previous paragraph. Therefore, $\lambda_i \in \Z$ for each $i \in \{1, \dots, m\}$. \renewcommand\qedsymbol{$\diamond$}\qedhere
    \end{proof}
    
    But \cref{claim:coeffs_in_Z} implies that 
    \[
        \overline y \in \gamma_{c-1}(N_c)/\gamma_c(N_c) \subset \gamma_{c-1}(\mathbf N_c)/\gamma_c(\mathbf N_c)
    \]
    or put differently that there are elements $z \in \gamma_c(\mathbf N_c) = Z(\mathbf N_c)$ and $y' \in \gamma_{c-1}(N_c)$ such that $y = zy'$. Then $x = gy'z$, which concludes the proof. \qedhere
\end{proof}

It is well known that $\mathcal T_p(\Z^n)$ is residually $p$ for all primes $p$. As an application of \cref{thm:p_inner_free_nilpotent}, we prove a corresponding statement for finitely generated free nilpotent groups, and more generally the groups $N_{c,\Gamma}$.

\begin{cor}\label{cor:res_p_free_nilp}
    Let $\Gamma$ be a finite simplicial graph and let $c \in \N$. The group $\mathcal T_p(N_{c,\Gamma})$ is residually $p$ for all primes $p$.
\end{cor}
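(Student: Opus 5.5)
Write $N = N_{c,\Gamma}$ and $\mathbf N = \mathbf N_{c,\Gamma}$. The plan is to embed $\mathcal T_p(N)$ into $\mathcal T_p(\mathbf N)$ and show the latter is a pro-$p$ group; residually $p$ then passes to subgroups.

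\emph{Step 1: the embedding.} By \cref{thm:appendix_main}, the natural map $\Out(N) \to \Out(\mathbf N)$ is injective. It carries $\mathcal T_p(N)$ into $\mathcal T_p(\mathbf N)$, since $N/\gamma_2(N)N^{p^\varepsilon}$ and $\mathbf N/\gamma_2(\mathbf N)\mathbf N^{p^\varepsilon}$ are canonically isomorphic. Hence $\mathcal T_p(N)$ is isomorphic to a subgroup of $\mathcal T_p(\mathbf N)$.

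\emph{Step 2: $\mathcal T_p(\mathbf N)$ is residually $p$.} Since $\mathbf N$ is a finitely generated pro-$p$ group, $\Aut(\mathbf N)$ is profinite. Let $\Phi(\mathbf N) = \gamma_2(\mathbf N)\mathbf N^p$ be the Frattini subgroup. A classical result states that the kernel of $\Aut(\mathbf N) \to \Aut(\mathbf N/\Phi(\mathbf N))$ is a pro-$p$ group; this is proved by filtering with the characteristic series $\Phi^{(k)}$ (iterated Frattini) and showing that each level acts through a $p$-group. The group $T_p(\mathbf N)$ is contained in this kernel, because $\gamma_2\mathbf N^{p^\varepsilon} \subseteq \Phi(\mathbf N)$. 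Therefore $T_p(\mathbf N)$ is pro-$p$.

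Next, $\Inn(\mathbf N) \cong \mathbf N/Z(\mathbf N)$ is a closed subgroup, being a compact image. The group $\mathcal T_p(\mathbf N)$ is the image of $T_p(\mathbf N)\Inn(\mathbf N)$ in $\Out(\mathbf N)$. Now $T_p(\mathbf N)\Inn(\mathbf N)$ is a product of a pro-$p$ normal subgroup and a pro-$p$ subgroup, so it is pro-$p$. Its quotient by the closed normal subgroup $\Inn(\mathbf N)$ is therefore pro-$p$, and in particular residually $p$.

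\emph{Step 3: conclusion.} A subgroup of a residually $p$ group is residually $p$. Combining this with Steps 1 and 2 shows that $\mathcal T_p(N)$ is residually $p$.

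The main obstacle is Step 2, namely justifying that $T_p(\mathbf N)$ is pro-$p$. First I would cite the standard fact (for example, from \cite{DixonEtAl_analyticPropBook}) that automorphisms of a finitely generated pro-$p$ group acting trivially on the Frattini quotient form a pro-$p$ group. If a more hands-on argument is wanted, the alternative is to observe that $T_p(\mathbf N)$ acts trivially on successive quotients of the characteristic filtration by $\gamma_i(\mathbf N)\mathbf N^{p^k}$, each of which is a finite $p$-group.
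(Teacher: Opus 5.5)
Your proposal is correct and is essentially the paper's proof. You inject $\mathcal T_p(N_{c,\Gamma})$ into $\mathcal T_p(\mathbf N_{c,\Gamma})$ via \cref{thm:appendix_main}, and exhibit $\mathcal T_p(\mathbf N_{c,\Gamma})$ as a quotient of the pro-$p$ group $T_p(\mathbf N_{c,\Gamma})$, which is pro-$p$ by \cite[Theorem 5.6]{DixonEtAl_analyticPropBook}. The only cosmetic difference is that $\Inn(\mathbf N)\leqslant T_p(\mathbf N)$ already, because inner automorphisms act trivially on the Abelian quotient $\mathbf N/\gamma_2(\mathbf N)\mathbf N^{p^\varepsilon}$, so your product $T_p(\mathbf N)\Inn(\mathbf N)$ is just $T_p(\mathbf N)$; the equality of its image with $\mathcal T_p(\mathbf N)$, which you assert without proof, is exactly the surjectivity the paper checks with the Four Lemma, and it holds because $\Out$ of an Abelian group is its $\Aut$.
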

\begin{proof}
    Since $\Gamma$ is fixed, we will abbreviate $N_{c,\Gamma}$ by $N_c$. As usual, we let $\mathbf N_c$ denote the pro-$p$ completion of $N_c$. By \cite[Theorem 5.6]{DixonEtAl_analyticPropBook}, $T_p(\mathbf N_c)$ is a pro-$p$ group, and in particular is residually $p$. There is a commuting diagram of short exact sequences
    \[
        \begin{tikzcd}
            1 \arrow[r] & T_p(\mathbf N_c) \arrow[r] \arrow[d] & \Aut(\mathbf N_c) \arrow[r] \arrow[d] & {\Aut(\mathbf N_c/[\mathbf N_c,\mathbf N_c]\mathbf N_c^p)} \arrow[d, Rightarrow, no head] \\
            1 \arrow[r] & \mathcal T_p(\mathbf N_c) \arrow[r]  & \Out(\mathbf N_c) \arrow[r] & {\Out(\mathbf N_c/[\mathbf N_c,\mathbf N_c]\mathbf N_c^p)} \nospacepunct{,}
        \end{tikzcd}
    \]
    where the central vertical map is surjective by definition, and the rightmost vertical map is a canonical isomorphism since $\mathbf N_c/[\mathbf N_c,\mathbf N_c]\mathbf N_c^p$ is Abelian (if $p = 2$, then one must replace $p$ with $p^2$ in the above diagram). By the Four Lemma, it follows that the leftmost vertical map is onto, and therefore $\mathcal T_p(\mathbf N_c)$ is pro-$p$, being the quotient of a pro-$p$ group. There is a commuting square
    \[
        \begin{tikzcd}
            \mathcal T_p(N_c) \arrow[r] \arrow[d] & \Out(N_c) \arrow[d] \\
            \mathcal T_p(\mathbf N_c) \arrow[r] & \Out(\mathbf N_c) \nospacepunct{,}
        \end{tikzcd}
    \]
    where the horizontal maps are inclusions by definition, and the right vertical map is an inclusion by \cref{thm:p_inner_free_nilpotent}. Hence, the left vertical map is an inclusion and thus $\mathcal T_p(N_c)$ is residually $p$, being a subgroup of a pro-$p$ group. \qedhere
\end{proof}

\bibliography{bib}

@article{AbertBergeronFraczykGaboriau_torsion,
  author   = {Abert, Miklos and Bergeron, Nicolas and Fr{\k{a}}czyk, Miko{\l}aj and Gaboriau, Damien},
  title    = {On homology torsion growth},
  journal  = {J. Eur. Math. Soc. (JEMS)},
  fjournal = {Journal of the European Mathematical Society (JEMS)},
  volume   = {27},
  year     = {2025},
  number   = {6},
  pages    = {2293--2357},
  issn     = {1435-9855,1435-9863},
  mrclass  = {20E26 (20F36 20F69 20G25)},
  mrnumber = {4889244},
  doi      = {10.4171/jems/1411},
  url      = {https://doi.org/10.4171/jems/1411}
}

@article{Andreadakis1965,
  author     = {Andreadakis, S.},
  title      = {On the automorphisms of free groups and free nilpotent groups},
  journal    = {Proc. London Math. Soc. (3)},
  fjournal   = {Proceedings of the London Mathematical Society. Third Series},
  volume     = {15},
  year       = {1965},
  pages      = {239--268},
  issn       = {0024-6115,1460-244X},
  mrclass    = {20.22 (20.10)},
  mrnumber   = {188307},
  mrreviewer = {W.\ Moser},
  doi        = {10.1112/plms/s3-15.1.239},
  url        = {https://doi.org/10.1112/plms/s3-15.1.239}
}

@article{Asada1995,
  author     = {Asada, Mamoru},
  title      = {Two properties of the filtration of the outer automorphism
                groups of certain groups},
  journal    = {Math. Z.},
  fjournal   = {Mathematische Zeitschrift},
  volume     = {218},
  year       = {1995},
  number     = {1},
  pages      = {123--133},
  issn       = {0025-5874,1432-1823},
  mrclass    = {20F28 (12F99 20E18)},
  mrnumber   = {1312581},
  mrreviewer = {Marcus\ du Sautoy},
  doi        = {10.1007/BF02571892},
  url        = {https://doi.org/10.1007/BF02571892}
}

@incollection{Atiyah_OGL2,
  author     = {Atiyah, M. F.},
  title      = {Elliptic operators, discrete groups and von {N}eumann
                algebras},
  booktitle  = {Colloque ``{A}nalyse et {T}opologie'' en l'{H}onneur de
                {H}enri {C}artan ({O}rsay, 1974)},
  series     = {Ast\'{e}risque},
  volume     = {No. 32-33},
  pages      = {43--72},
  publisher  = {Soc. Math. France, Paris},
  year       = {1976},
  mrclass    = {58G10 (22E45 46L10)},
  mrnumber   = {420729},
  mrreviewer = {R.\ D.\ Moyer}
}

@article{Austin2013,
  author     = {Austin, Tim},
  title      = {Rational group ring elements with kernels having irrational
                dimension},
  journal    = {Proc. Lond. Math. Soc. (3)},
  fjournal   = {Proceedings of the London Mathematical Society. Third Series},
  volume     = {107},
  year       = {2013},
  number     = {6},
  pages      = {1424--1448},
  issn       = {0024-6115,1460-244X},
  mrclass    = {20F65 (05C25 58J22)},
  mrnumber   = {3149852},
  mrreviewer = {Qihui\ Li},
  doi        = {10.1112/plms/pdt029},
  url        = {https://doi.org/10.1112/plms/pdt029}
}

@incollection{BassLubotzky1994,
  author     = {Bass, Hyman and Lubotzky, Alexander},
  title      = {Linear-central filtrations on groups},
  booktitle  = {The mathematical legacy of {W}ilhelm {M}agnus: groups,
                geometry and special functions ({B}rooklyn, {NY}, 1992)},
  series     = {Contemp. Math.},
  volume     = {169},
  pages      = {45--98},
  publisher  = {Amer. Math. Soc., Providence, RI},
  year       = {1994},
  isbn       = {0-8218-5156-X},
  mrclass    = {20F14 (16U60 20E15)},
  mrnumber   = {1292897},
  mrreviewer = {Dimitrios\ Varsos},
  doi        = {10.1090/conm/169/01651},
  url        = {https://doi.org/10.1090/conm/169/01651}
}

@article{Berberian_vonNeumannOre,
  author     = {Berberian, S. K.},
  title      = {The maximal ring of quotients of a finite von {N}eumann
                algebra},
  journal    = {Rocky Mountain J. Math.},
  fjournal   = {The Rocky Mountain Journal of Mathematics},
  volume     = {12},
  year       = {1982},
  number     = {1},
  pages      = {149--164},
  issn       = {0035-7596,1945-3795},
  mrclass    = {16A08 (16A30 46L10)},
  mrnumber   = {649748},
  mrreviewer = {David\ Handelman},
  doi        = {10.1216/RMJ-1982-12-1-149},
  url        = {https://doi.org/10.1216/RMJ-1982-12-1-149}
}

@article{BigelowBudney2001,
  author     = {Bigelow, Stephen J. and Budney, Ryan D.},
  title      = {The mapping class group of a genus two surface is linear},
  journal    = {Algebr. Geom. Topol.},
  fjournal   = {Algebraic \& Geometric Topology},
  volume     = {1},
  year       = {2001},
  pages      = {699--708},
  issn       = {1472-2747,1472-2739},
  mrclass    = {20F36 (20C15 57M07)},
  mrnumber   = {1875613},
  mrreviewer = {Luis\ Paris},
  doi        = {10.2140/agt.2001.1.699},
  url        = {https://doi.org/10.2140/agt.2001.1.699}
}

@article{Casals-RuizPintonelloZalesskii2025,
  author     = {Casals-Ruiz, Montserrat and Pintonello, Matteo and Zalesskii,
                Pavel},
  title      = {Pro-{$\mathcal{C}$} {RAAG}s},
  journal    = {J. Algebra},
  fjournal   = {Journal of Algebra},
  volume     = {664},
  year       = {2025},
  pages      = {177--208},
  issn       = {0021-8693,1090-266X},
  mrclass    = {20E08 (20E06 20E18)},
  mrnumber   = {4812314},
  mrreviewer = {Matteo\ Vannacci},
  doi        = {10.1016/j.jalgebra.2024.09.030},
  url        = {https://doi.org/10.1016/j.jalgebra.2024.09.030}
}

@article{CharneyVogtmann2009,
  author     = {Charney, Ruth and Vogtmann, Karen},
  title      = {Finiteness properties of automorphism groups of right-angled
                {A}rtin groups},
  journal    = {Bull. Lond. Math. Soc.},
  fjournal   = {Bulletin of the London Mathematical Society},
  volume     = {41},
  year       = {2009},
  number     = {1},
  pages      = {94--102},
  issn       = {0024-6093,1469-2120},
  mrclass    = {20F36},
  mrnumber   = {2481994},
  mrreviewer = {Volker\ Gebhardt},
  doi        = {10.1112/blms/bdn108},
  url        = {https://doi.org/10.1112/blms/bdn108}
}

@article{DahmaniFrancavigliaMartinoTouikan_ConjOutF3_2025,
  author     = {Dahmani, Fran\c cois and Francaviglia, Stefano and Martino, Armando and Touikan, Nicholas},
  title      = {The conjugacy problem for {${\rm Out}(F_3)$}},
  journal    = {Forum Math. Sigma},
  fjournal   = {Forum of Mathematics. Sigma},
  volume     = {13},
  year       = {2025},
  pages      = {Paper No. e41, 21},
  issn       = {2050-5094},
  mrclass    = {20F65 (20E05 20E36 20F10 20F67)},
  mrnumber   = {4863720},
  mrreviewer = {Michael\ Hull},
  doi        = {10.1017/fms.2025.3},
  url        = {https://doi.org/10.1017/fms.2025.3}
}

@book{DixonEtAl_analyticPropBook,
  author     = {Dixon, J. D. and du Sautoy, M. P. F. and Mann, A. and Segal,
                D.},
  title      = {Analytic pro-{$p$}-groups},
  series     = {London Mathematical Society Lecture Note Series},
  volume     = {157},
  publisher  = {Cambridge University Press, Cambridge},
  year       = {1991},
  pages      = {x+251},
  isbn       = {0-521-39580-1},
  mrclass    = {20E18 (20G30)},
  mrnumber   = {1152800},
  mrreviewer = {Andy\ R.\ Magid}
}

@article{DuchampKrob_RAAGsRTFN,
  author   = {Duchamp, G\'erard and Krob, Daniel},
  title    = {The lower central series of the free partially commutative
              group},
  journal  = {Semigroup Forum},
  fjournal = {Semigroup Forum},
  volume   = {45},
  year     = {1992},
  number   = {3},
  pages    = {385--394},
  issn     = {0037-1912},
  mrclass  = {20F14},
  mrnumber = {1179860},
  doi      = {10.1007/BF03025778},
  url      = {https://doi-org.ezproxy-prd.bodleian.ox.ac.uk/10.1007/BF03025778}
}

@article{Endimioni2002,
  author     = {Endimioni, G.},
  title      = {Pointwise inner automorphisms in a free nilpotent group},
  journal    = {Q. J. Math.},
  fjournal   = {The Quarterly Journal of Mathematics},
  volume     = {53},
  year       = {2002},
  number     = {4},
  pages      = {397--402},
  issn       = {0033-5606,1464-3847},
  mrclass    = {20E36 (20F18)},
  mrnumber   = {1949151},
  mrreviewer = {Martyn\ R.\ Dixon},
  doi        = {10.1093/qjmath/53.4.397},
  url        = {https://doi.org/10.1093/qjmath/53.4.397}
}

@article{Epstein66,
  author     = {Epstein, D. B. A.},
  title      = {Curves on {$2$}-manifolds and isotopies},
  journal    = {Acta Math.},
  fjournal   = {Acta Mathematica},
  volume     = {115},
  year       = {1966},
  pages      = {83--107},
  issn       = {0001-5962,1871-2509},
  mrclass    = {57.20},
  mrnumber   = {214087},
  mrreviewer = {P.\ Dedecker},
  doi        = {10.1007/BF02392203},
  url        = {https://doi.org/10.1007/BF02392203}
}

@incollection{FarkasLinnell2006,
  author     = {Farkas, Daniel R. and Linnell, Peter A.},
  title      = {Congruence subgroups and the {A}tiyah conjecture},
  booktitle  = {Groups, rings and algebras},
  series     = {Contemp. Math.},
  volume     = {420},
  pages      = {89--102},
  publisher  = {Amer. Math. Soc., Providence, RI},
  year       = {2006},
  isbn       = {978-0-8218-3904-1; 0-8218-3904-7},
  mrclass    = {16S34 (20C07 46L10)},
  mrnumber   = {2279234},
  mrreviewer = {Kenneth\ A.\ Brown},
  doi        = {10.1090/conm/420/07970},
  url        = {https://doi.org/10.1090/conm/420/07970}
}

@misc{fishersanchez_divrings,
  title         = {Division rings for group algebras of virtually compact special groups and $3$-manifold groups},
  author        = {Sam P. Fisher and Pablo Sánchez-Peralta},
  year          = {2023},
  note          = {{\tt arXiv:2303.08165} (to appear in \emph{J. Comb. Algebra})},
  eprint        = {2303.08165},
  archiveprefix = {arXiv},
  primaryclass  = {math.GR}
}

@article{FormanekProcesi1992,
  author     = {Formanek, Edward and Procesi, Claudio},
  title      = {The automorphism group of a free group is not linear},
  journal    = {J. Algebra},
  fjournal   = {Journal of Algebra},
  volume     = {149},
  year       = {1992},
  number     = {2},
  pages      = {494--499},
  issn       = {0021-8693,1090-266X},
  mrclass    = {20F28},
  mrnumber   = {1172442},
  mrreviewer = {Alexander\ Lubotzky},
  doi        = {10.1016/0021-8693(92)90029-L},
  url        = {https://doi.org/10.1016/0021-8693(92)90029-L}
}

@article{FriedlLuck_euler,
  author     = {Friedl, Stefan and L\"{u}ck, Wolfgang},
  title      = {{$L^2$}-{E}uler characteristics and the {T}hurston norm},
  journal    = {Proc. Lond. Math. Soc. (3)},
  fjournal   = {Proceedings of the London Mathematical Society. Third Series},
  volume     = {118},
  year       = {2019},
  number     = {4},
  pages      = {857--900},
  issn       = {0024-6115},
  mrclass    = {57M27 (22D25 58J52)},
  mrnumber   = {3938714},
  mrreviewer = {Nikhil Savale},
  doi        = {10.1112/plms.12202},
  url        = {https://doi-org.ezproxy-prd.bodleian.ox.ac.uk/10.1112/plms.12202}
}

@article{GaboriauNous_topDimL2_2021,
  author     = {Gaboriau, Damien and No{\^u}s, Camille},
  title      = {On the top-dimensional {$\ell^2$}-{B}etti numbers},
  journal    = {Ann. Fac. Sci. Toulouse Math. (6)},
  fjournal   = {Annales de la Facult\'e{} des Sciences de Toulouse.
                Math\'ematiques. S\'erie 6},
  volume     = {30},
  year       = {2021},
  number     = {5},
  pages      = {1121--1137},
  issn       = {0240-2963,2258-7519},
  mrclass    = {57K35 (19K56 20E15 20F28 37A20)},
  mrnumber   = {4401387},
  mrreviewer = {Kevin\ D.\ Schreve},
  doi        = {10.5802/afst.1695},
  url        = {https://doi.org/10.5802/afst.1695}
}

@article{Grabowski2014,
  author     = {Grabowski, \L ukasz},
  title      = {On {T}uring dynamical systems and the {A}tiyah problem},
  journal    = {Invent. Math.},
  fjournal   = {Inventiones Mathematicae},
  volume     = {198},
  year       = {2014},
  number     = {1},
  pages      = {27--69},
  issn       = {0020-9910,1432-1297},
  mrclass    = {20F65 (20L05 37A30)},
  mrnumber   = {3260857},
  mrreviewer = {Alain\ Valette},
  doi        = {10.1007/s00222-013-0497-5},
  url        = {https://doi.org/10.1007/s00222-013-0497-5}
}

@article{Grossman1974,
  author     = {Grossman, Edna K.},
  title      = {On the residual finiteness of certain mapping class groups},
  journal    = {J. London Math. Soc. (2)},
  fjournal   = {Journal of the London Mathematical Society. Second Series},
  volume     = {9},
  year       = {1974/75},
  pages      = {160--164},
  issn       = {0024-6107,1469-7750},
  mrclass    = {57A05 (20E25)},
  mrnumber   = {405423},
  mrreviewer = {J.\ S.\ Birman},
  doi        = {10.1112/jlms/s2-9.1.160},
  url        = {https://doi.org/10.1112/jlms/s2-9.1.160}
}

@incollection{Grunewald_conjugacy,
  author     = {Grunewald, Fritz J.},
  title      = {Solution of the conjugacy problem in certain arithmetic
                groups},
  booktitle  = {Word problems, {II} ({C}onf. on {D}ecision {P}roblems in
                {A}lgebra, {O}xford, 1976)},
  series     = {Stud. Logic Found. Math.},
  volume     = {95},
  pages      = {101--139},
  publisher  = {North-Holland, Amsterdam-New York},
  year       = {1980},
  isbn       = {0-444-85343-X},
  mrclass    = {20G25},
  mrnumber   = {579942},
  mrreviewer = {R.\ C.\ Lyndon}
}

@article{Harris_padicdescent_1979,
  author   = {Harris, Michael},
  title    = {{{\(p\)}}-adic representations arising from descent on {Abelian} varieties},
  fjournal = {Compositio Mathematica},
  journal  = {Compos. Math.},
  issn     = {0010-437X},
  volume   = {39},
  pages    = {177--245},
  year     = {1979},
  language = {English},
  url      = {https://eudml.org/doc/89418},
  zbmath   = {3648898},
  zbl      = {0417.14034}
}

@article{Hemion1979,
  author     = {Hemion, Geoffrey},
  title      = {On the classification of homeomorphisms of {$2$}-manifolds and
                the classification of {$3$}-manifolds},
  journal    = {Acta Math.},
  fjournal   = {Acta Mathematica},
  volume     = {142},
  year       = {1979},
  number     = {1-2},
  pages      = {123--155},
  issn       = {0001-5962,1871-2509},
  mrclass    = {57N05 (57N10)},
  mrnumber   = {512214},
  mrreviewer = {J.\ S.\ Birman},
  doi        = {10.1007/BF02395059},
  url        = {https://doi.org/10.1007/BF02395059}
}

@incollection{Hensel_Survey,
  author    = {Hensel, Sebastian},
  title     = {A primer on handlebody groups},
  booktitle = {Handbook of group actions. {V}},
  series    = {Adv. Lect. Math. (ALM)},
  volume    = {48},
  pages     = {143--177},
  publisher = {Int. Press, Somerville, MA},
  year      = {[2020] \copyright 2020},
  isbn      = {978-1-57146-390-6},
  mrclass   = {20F65 (57M07 57M60)},
  mrnumber  = {4237892}
}

@misc{Ivanova2002,
  title         = {On the conjugacy separability in the class of finite $p$-groups of finitely generated nilpotent groups},
  author        = {Elena A Ivanova},
  year          = {2002},
  eprint        = {0408393},
  archiveprefix = {arXiv},
  primaryclass  = {math.GR}
}

@article{JaikinLinton_coherence,
  author   = {Jaikin-Zapirain, Andrei and Linton, Marco},
  title    = {On the coherence of one-relator groups and their group
              algebras},
  journal  = {Ann. of Math. (2)},
  fjournal = {Annals of Mathematics. Second Series},
  volume   = {201},
  year     = {2025},
  number   = {3},
  pages    = {909--959},
  issn     = {0003-486X},
  mrclass  = {20E07 (20E08 20J05)},
  mrnumber = {4899802},
  doi      = {10.4007/annals.2025.201.3.4},
  url      = {https://doi-org.ezproxy-prd.bodleian.ox.ac.uk/10.4007/annals.2025.201.3.4}
}

@misc{JaikinLintonSanchez_OneRelProd,
  title         = {Group pairs, coherence and Farrell--Jones Conjecture for {$K_0$}},
  author        = {Andrei Jaikin-Zapirain and Marco Linton and Pablo Sánchez-Peralta},
  year          = {2025},
  eprint        = {2510.23518},
  note          = {{\tt arXiv:2510.23518}},
  archiveprefix = {arXiv},
  primaryclass  = {math.GR},
  url           = {https://arxiv.org/abs/2510.23518}
}

@article{JaikinLopezStrongAtiyah2020,
  author     = {Jaikin-Zapirain, Andrei and L{\'{o}}pez-{\'{A}}lvarez, Diego},
  title      = {The strong {A}tiyah and {L}\"{u}ck approximation conjectures for
                one-relator groups},
  journal    = {Math. Ann.},
  fjournal   = {Mathematische Annalen},
  volume     = {376},
  year       = {2020},
  number     = {3-4},
  pages      = {1741--1793},
  issn       = {0025-5831},
  mrclass    = {20F05 (16K40 16S34)},
  mrnumber   = {4081128},
  mrreviewer = {Shoumin Liu},
  doi        = {10.1007/s00208-019-01926-0},
  url        = {https://doi.org/10.1007/s00208-019-01926-0}
}

@article{JaikinZapirain_basechange,
  author   = {Jaikin-Zapirain, Andrei},
  title    = {The base change in the {A}tiyah and the {L}\"uck approximation
              conjectures},
  journal  = {Geom. Funct. Anal.},
  fjournal = {Geometric and Functional Analysis},
  volume   = {29},
  year     = {2019},
  number   = {2},
  pages    = {464--538},
  issn     = {1016-443X,1420-8970},
  mrclass  = {20C07 (16E50 16S34 46L10 47A58)},
  mrnumber = {3945838},
  doi      = {10.1007/s00039-019-00487-3},
  url      = {https://doi.org/10.1007/s00039-019-00487-3}
}

@incollection{Johnson_survey,
  author     = {Johnson, Dennis},
  title      = {A survey of the {T}orelli group},
  booktitle  = {Low-dimensional topology ({S}an {F}rancisco, {C}alif., 1981)},
  series     = {Contemp. Math.},
  volume     = {20},
  pages      = {165--179},
  publisher  = {Amer. Math. Soc., Providence, RI},
  year       = {1983},
  isbn       = {0-8218-5016-4},
  mrclass    = {57N05 (14H15 32G15 57-02)},
  mrnumber   = {718141},
  mrreviewer = {J.\ S.\ Birman},
  doi        = {10.1090/conm/020/718141},
  url        = {https://doi.org/10.1090/conm/020/718141}
}

@article{KalubaKielakNowak_T,
  author     = {Kaluba, Marek and Kielak, Dawid and Nowak, Piotr W.},
  title      = {On property ({T}) for {$\rm{Aut}(F_n)$} and
                {$\rm{SL}_n(\Bbb {Z})$}},
  journal    = {Ann. of Math. (2)},
  fjournal   = {Annals of Mathematics. Second Series},
  volume     = {193},
  year       = {2021},
  number     = {2},
  pages      = {539--562},
  issn       = {0003-486X,1939-8980},
  mrclass    = {22D55 (20F28)},
  mrnumber   = {4224715},
  mrreviewer = {Marco\ Trombetti},
  doi        = {10.4007/annals.2021.193.2.3},
  url        = {https://doi.org/10.4007/annals.2021.193.2.3}
}

@article{Kazhdan1967,
  author     = {Ka{\v z}dan, D. A.},
  title      = {On the connection of the dual space of a group with the
                structure of its closed subgroups},
  journal    = {Funkcional. Anal. i Prilo\v zen.},
  fjournal   = {Akademija Nauk SSSR. Funkcional\cprime nyi Analiz i ego
                Prilo\v zenija},
  volume     = {1},
  year       = {1967},
  pages      = {71--74},
  issn       = {0374-1990},
  mrclass    = {22.20},
  mrnumber   = {209390},
  mrreviewer = {K.\ Strambach}
}

@misc{KielakLinton_3mfldAtiyah,
  title         = {The Atiyah conjecture for three-manifold groups},
  author        = {Kielak, Dawid and Linton, Marco},
  year          = {2023},
  note          = {{\tt arXiv:2303.15907}},
  eprint        = {2303.15907},
  archiveprefix = {arXiv},
  primaryclass  = {math.GT}
}

@incollection{KrophollerLinnellLueck2009,
  author     = {Kropholler, Peter and Linnell, Peter and L\"uck, Wolfgang},
  title      = {Groups of small homological dimension and the {A}tiyah
                conjecture},
  booktitle  = {Geometric and cohomological methods in group theory},
  series     = {London Math. Soc. Lecture Note Ser.},
  volume     = {358},
  pages      = {272--277},
  publisher  = {Cambridge Univ. Press, Cambridge},
  year       = {2009},
  isbn       = {978-0-521-75724-9},
  mrclass    = {20J06 (18G20 46L99)},
  mrnumber   = {2605183},
  mrreviewer = {Brita\ E. A. Nucinkis}
}

@article{KrophollerLinnellMoody_Ore,
  author     = {Kropholler, P. H. and Linnell, P. A. and Moody, J. A.},
  title      = {Applications of a new {$K$}-theoretic theorem to soluble group
                rings},
  journal    = {Proc. Amer. Math. Soc.},
  fjournal   = {Proceedings of the American Mathematical Society},
  volume     = {104},
  year       = {1988},
  number     = {3},
  pages      = {675--684},
  issn       = {0002-9939},
  mrclass    = {16A27 (16A08 16A34)},
  mrnumber   = {964842},
  mrreviewer = {L. N. Vaserstein},
  doi        = {10.2307/2046771},
  url        = {https://doi-org.ezproxy-prd.bodleian.ox.ac.uk/10.2307/2046771}
}

@incollection{Lalonde_basesLyndon93,
  author     = {Lalonde, Pierre},
  title      = {Bases de {L}yndon des alg\`ebres de {L}ie libres partiellement
                commutatives},
  note       = {Conference on Formal Power Series and Algebraic Combinatorics
                (Bordeaux, 1991)},
  journal    = {Theoret. Comput. Sci.},
  fjournal   = {Theoretical Computer Science},
  volume     = {117},
  year       = {1993},
  number     = {1-2},
  pages      = {217--226},
  issn       = {0304-3975,1879-2294},
  mrclass    = {17B01 (17B60)},
  mrnumber   = {1235180},
  mrreviewer = {Vesselin\ Drensky},
  doi        = {10.1016/0304-3975(93)90315-K},
  url        = {https://doi.org/10.1016/0304-3975(93)90315-K}
}

@article{Lazard1954,
  author     = {Lazard, Michel},
  title      = {Sur les groupes nilpotents et les anneaux de {L}ie},
  journal    = {Ann. Sci. \'Ecole Norm. Sup. (3)},
  fjournal   = {Annales Scientifiques de l'\'Ecole Normale Sup\'erieure.
                Troisi\`eme S\'erie},
  volume     = {71},
  year       = {1954},
  pages      = {101--190},
  issn       = {0012-9593},
  mrclass    = {20.0X},
  mrnumber   = {88496},
  mrreviewer = {B.\ H.\ Neumann},
  url        = {http://www.numdam.org/item?id=ASENS_1954_3_71_2_101_0}
}

@article{Lazard1965,
  author   = {Lazard, Michel},
  title    = {Groupes analytiques {$p$}-adiques},
  journal  = {Inst. Hautes \'Etudes Sci. Publ. Math.},
  fjournal = {Institut des Hautes \'Etudes Scientifiques. Publications
              Math\'ematiques},
  number   = {26},
  year     = {1965},
  pages    = {389--603},
  issn     = {0073-8301,1618-1913},
  mrclass  = {14.50},
  mrnumber = {209286},
  url      = {http://www.numdam.org/item?id=PMIHES_1965__26__389_0}
}

@article{LinnellDivRings93,
  author     = {Linnell, Peter A.},
  title      = {Division rings and group von {N}eumann algebras},
  journal    = {Forum Math.},
  fjournal   = {Forum Mathematicum},
  volume     = {5},
  year       = {1993},
  number     = {6},
  pages      = {561--576},
  issn       = {0933-7741},
  mrclass    = {20C07 (16K40 16S35 22D25 46L10)},
  mrnumber   = {1242889},
  mrreviewer = {Alain Valette},
  doi        = {10.1515/form.1993.5.561},
  url        = {https://ezproxy-prd.bodleian.ox.ac.uk:2102/10.1515/form.1993.5.561}
}

@article{LinnellSchick_AtiyahExt,
  author     = {Linnell, Peter and Schick, Thomas},
  title      = {Finite group extensions and the {A}tiyah conjecture},
  journal    = {J. Amer. Math. Soc.},
  fjournal   = {Journal of the American Mathematical Society},
  volume     = {20},
  year       = {2007},
  number     = {4},
  pages      = {1003--1051},
  issn       = {0894-0347},
  mrclass    = {58J22 (16S34 55N25 57R20)},
  mrnumber   = {2328714},
  mrreviewer = {Piotr W. Nowak},
  doi        = {10.1090/S0894-0347-07-00561-9},
  url        = {https://doi.org/10.1090/S0894-0347-07-00561-9}
}

@book{Luck02,
  author     = {L{\"u}ck, Wolfgang},
  title      = {{$L\sp 2$}-invariants: theory and applications to geometry and
                {$K$}-theory},
  publisher  = {Springer-Verlag},
  address    = {Berlin},
  year       = {2002},
  pages      = {xvi+595},
  isbn       = {3-540-43566-2},
  mrclass    = {58J22 (19K56 46L80 57Q10 57R20 58J52)},
  mrnumber   = {2003m:58033},
  mrreviewer = {Thomas Schick}
}

@article{Magnus_freegrpsRTFN,
  author   = {Magnus, Wilhelm},
  title    = {Beziehungen zwischen {G}ruppen und {I}dealen in einem
              speziellen {R}ing},
  journal  = {Math. Ann.},
  fjournal = {Mathematische Annalen},
  volume   = {111},
  year     = {1935},
  number   = {1},
  pages    = {259--280},
  issn     = {0025-5831},
  mrclass  = {DML},
  mrnumber = {1512992},
  doi      = {10.1007/BF01472217},
  url      = {https://doi-org.ezproxy-prd.bodleian.ox.ac.uk/10.1007/BF01472217}
}

@article{Malcev_series,
  author     = {Mal{'}cev, Anatoli\u\i I.},
  title      = {On the embedding of group algebras in division algebras},
  journal    = {Doklady Akad. Nauk SSSR (N.S.)},
  volume     = {60},
  year       = {1948},
  pages      = {1499--1501},
  mrclass    = {09.1X},
  mrnumber   = {0025457},
  mrreviewer = {I. Kaplansky}
}

@article{Malcolmson_SkewFields,
  author     = {Malcolmson, Peter},
  title      = {Determining homomorphisms to skew fields},
  journal    = {J. Algebra},
  fjournal   = {Journal of Algebra},
  volume     = {64},
  year       = {1980},
  number     = {2},
  pages      = {399--413},
  issn       = {0021-8693},
  mrclass    = {16A08},
  mrnumber   = {0579068},
  mrreviewer = {P. M. Cohn},
  doi        = {10.1016/0021-8693(80)90153-2},
  url        = {https://doi.org/10.1016/0021-8693(80)90153-2}
}

@book{McConnellRobsonNNR,
  author    = {McConnell, J. C. and Robson, J. C.},
  title     = {Noncommutative {N}oetherian rings},
  series    = {Graduate Studies in Mathematics},
  volume    = {30},
  edition   = {Revised},
  note      = {With the cooperation of L. W. Small},
  publisher = {American Mathematical Society, Providence, RI},
  year      = {2001},
  pages     = {xx+636},
  isbn      = {0-8218-2169-5},
  mrclass   = {16P40 (16-02)},
  mrnumber  = {1811901},
  doi       = {10.1090/gsm/030},
  url       = {https://doi.org/10.1090/gsm/030}
}

@article{McCool,
  author     = {McCool, James},
  title      = {A faithful polynomial representation of {${\rm Out}\,F_3$}},
  journal    = {Math. Proc. Cambridge Philos. Soc.},
  fjournal   = {Mathematical Proceedings of the Cambridge Philosophical
                Society},
  volume     = {106},
  year       = {1989},
  number     = {2},
  pages      = {207--213},
  issn       = {0305-0041,1469-8064},
  mrclass    = {20F28},
  mrnumber   = {1002533},
  mrreviewer = {W.\ Metzler and Cynthia\ Hog-Angeloni},
  doi        = {10.1017/S0305004100078026},
  url        = {https://doi.org/10.1017/S0305004100078026}
}

@article{Minasyan2012,
  author     = {Minasyan, Ashot},
  title      = {Hereditary conjugacy separability of right-angled {A}rtin
                groups and its applications},
  journal    = {Groups Geom. Dyn.},
  fjournal   = {Groups, Geometry, and Dynamics},
  volume     = {6},
  year       = {2012},
  number     = {2},
  pages      = {335--388},
  issn       = {1661-7207,1661-7215},
  mrclass    = {20F36 (20E26 20F10 20F55 20F67)},
  mrnumber   = {2914863},
  mrreviewer = {Ian\ M.\ Chiswell},
  doi        = {10.4171/GGD/160},
  url        = {https://doi.org/10.4171/GGD/160}
}

@article{Neumann_completedGpAlgDomain,
  author     = {Neumann, Andreas},
  title      = {Completed group algebras without zero divisors},
  journal    = {Arch. Math. (Basel)},
  fjournal   = {Archiv der Mathematik},
  volume     = {51},
  year       = {1988},
  number     = {6},
  pages      = {496--499},
  issn       = {0003-889X,1420-8938},
  mrclass    = {20C05 (16A26)},
  mrnumber   = {973723},
  mrreviewer = {Alexander\ Lubotzky},
  doi        = {10.1007/BF01261969},
  url        = {https://doi.org/10.1007/BF01261969}
}

@article{Neumann_series,
  author     = {Neumann, Bernhard H.},
  title      = {On ordered division rings},
  journal    = {Trans. Amer. Math. Soc.},
  fjournal   = {Transactions of the American Mathematical Society},
  volume     = {66},
  year       = {1949},
  pages      = {202--252},
  issn       = {0002-9947},
  mrclass    = {09.1X},
  mrnumber   = {32593},
  mrreviewer = {R. Moufang},
  doi        = {10.2307/1990552},
  url        = {https://doi-org.ezproxy-prd.bodleian.ox.ac.uk/10.2307/1990552}
}

@misc{Nitsche_AutF4propT_2023,
  author    = {Nitsche, Martin},
  title     = {Computer proofs for Property ({T}), and SDP duality},
  note      = {{\tt \href{https://arxiv.org/abs/2009.05134}{arXiv:2009.05134}}},
  publisher = {arXiv},
  year      = {2023},
  copyright = {arXiv.org perpetual, non-exclusive license}
}

@article{Paris2009,
  author     = {Paris, Luis},
  title      = {Residual {$p$} properties of mapping class groups and surface
                groups},
  journal    = {Trans. Amer. Math. Soc.},
  fjournal   = {Transactions of the American Mathematical Society},
  volume     = {361},
  year       = {2009},
  number     = {5},
  pages      = {2487--2507},
  issn       = {0002-9947,1088-6850},
  mrclass    = {20F38 (20E26 20F14 20F34 57M07)},
  mrnumber   = {2471926},
  mrreviewer = {Jos\'e\ Burillo},
  doi        = {10.1090/S0002-9947-08-04573-X},
  url        = {https://doi.org/10.1090/S0002-9947-08-04573-X}
}

@article{PichotSchickZuk,
  author     = {Pichot, Mika\"el and Schick, Thomas and Zuk, Andrzej},
  title      = {Closed manifolds with transcendental {$L^2$}-{B}etti numbers},
  journal    = {J. Lond. Math. Soc. (2)},
  fjournal   = {Journal of the London Mathematical Society. Second Series},
  volume     = {92},
  year       = {2015},
  number     = {2},
  pages      = {371--392},
  issn       = {0024-6107,1469-7750},
  mrclass    = {20F65 (57R19)},
  mrnumber   = {3404029},
  mrreviewer = {Kevin\ D.\ Schreve},
  doi        = {10.1112/jlms/jdv026},
  url        = {https://doi.org/10.1112/jlms/jdv026}
}

@article{SanchezPeralta2025,
  author     = {S\'anchez-Peralta, Pablo},
  title      = {Universal localizations, {A}tiyah conjectures and graphs of
                groups},
  journal    = {Geom. Funct. Anal.},
  fjournal   = {Geometric and Functional Analysis},
  volume     = {35},
  year       = {2025},
  number     = {3},
  pages      = {842--876},
  issn       = {1016-443X,1420-8970},
  mrclass    = {20F65 (16S85 19A49 19B99 20C07)},
  mrnumber   = {4915744},
  mrreviewer = {Francesco\ G.\ Russo},
  doi        = {10.1007/s00039-025-00710-4},
  url        = {https://doi.org/10.1007/s00039-025-00710-4}
}

@article{Schreve_AtiyahVCS,
  author     = {Schreve, Kevin},
  title      = {The strong {A}tiyah conjecture for virtually cocompact special
                groups},
  journal    = {Math. Ann.},
  fjournal   = {Mathematische Annalen},
  volume     = {359},
  year       = {2014},
  number     = {3-4},
  pages      = {629--636},
  issn       = {0025-5831},
  mrclass    = {20F65},
  mrnumber   = {3231009},
  mrreviewer = {Qin Wang},
  doi        = {10.1007/s00208-014-1007-9},
  url        = {https://doi.org/10.1007/s00208-014-1007-9}
}

@inproceedings{Segal90,
  author     = {Segal, Dan},
  title      = {On the outer automorphism group of a polycyclic group},
  booktitle  = {Proceedings of the {S}econd {I}nternational {G}roup {T}heory
                {C}onference ({B}ressanone, 1989)},
  journal    = {Rend. Circ. Mat. Palermo (2) Suppl.},
  fjournal   = {Rendiconti del Circolo Matematico di Palermo. Serie II.
                Supplemento},
  number     = {23},
  year       = {1990},
  pages      = {265--278},
  issn       = {1592-9531},
  mrclass    = {20F28 (20F16)},
  mrnumber   = {1068367},
  mrreviewer = {D.\ J. S. Robinson}
}

@article{Toinet2013,
  author     = {Toinet, Emmanuel},
  title      = {Conjugacy {$p$}-separability of right-angled {A}rtin groups
                and applications},
  journal    = {Groups Geom. Dyn.},
  fjournal   = {Groups, Geometry, and Dynamics},
  volume     = {7},
  year       = {2013},
  number     = {3},
  pages      = {751--790},
  issn       = {1661-7207,1661-7215},
  mrclass    = {20F36 (20E26 20F28)},
  mrnumber   = {3095717},
  mrreviewer = {R\'emi\ Bernard\ Coulon},
  doi        = {10.4171/GGD/205},
  url        = {https://doi.org/10.4171/GGD/205}
}

@article{Wade_JohnsonHomsHigherRank,
  author     = {Wade, Richard D.},
  title      = {Johnson homomorphisms and actions of higher-rank lattices on
                right-angled {A}rtin groups},
  journal    = {J. Lond. Math. Soc. (2)},
  fjournal   = {Journal of the London Mathematical Society. Second Series},
  volume     = {88},
  year       = {2013},
  number     = {3},
  pages      = {860--882},
  issn       = {0024-6107,1469-7750},
  mrclass    = {20E36 (20F36 22E40)},
  mrnumber   = {3145135},
  mrreviewer = {Fran\c cois\ Dahmani},
  doi        = {10.1112/jlms/jdt044},
  url        = {https://doi.org/10.1112/jlms/jdt044}
}
\bibliographystyle{alpha}

\end{document}